\let\ORIlabel\label
\let\ORIrefstepcounter\refstepcounter
\AddToHook{package/hyperref/before}{
   \let\label\ORIlabel 
   \let\refstepcounter\ORIrefstepcounter}
\documentclass[hidelinks,onefignum,onetabnum,onealgnum]{siamart250106}

\usepackage{graphicx} %
\usepackage{booktabs}
\usepackage{amsfonts}
\usepackage{amsmath}
\usepackage{amssymb}
 \usepackage{array}
\usepackage{mathtools}
\usepackage{colortbl} %
\usepackage{multicol}
\usepackage{enumitem}
\usepackage{bm}
\usepackage{comment}
\usepackage[font=small]{caption}
\usepackage{pgfplots}
\definecolor{CeruleanRef}{RGB}{12,127,172}
\hypersetup{colorlinks=true,allcolors=CeruleanRef}
\hypersetup{pdfstartview=FitB,pdfpagemode=UseNone}

\usepackage{rotating}
\RequirePackage{algorithm}[0.1]

\usepackage{algpseudocode}
\algnewcommand{\Initialize}[1]{
  \State \textbf{Initialize:}
  \Statex \hspace*{\algorithmicindent}\parbox[t]{.8\linewidth}{\raggedright #1}
}
\algnewcommand{\Indent}[2]{
  \State {#1}
  \vspace{-2mm}
  \Statex \hspace*{\algorithmicindent}\parbox[t]{.9\linewidth}{\raggedright #2}
}

\usepackage{subfig}
\usepackage{hhline}
\usepackage{multirow}
 \usepackage{wrapfig}

\newcommand{\vectt}[1]{\boldsymbol{\mathbf{#1}}}

\newcommand{\op}[1]{\operatorname{#1}}
\newcommand{\dx}{\mathrm{d}x}
\newcommand{\ds}{\mathrm{d}s}

\newcommand{\supp}{\mathrm{supp}}

\newcommand{\bu}{\vectt{u}}

\newcommand{\bphi}{\vectt{\varphi}}
\newcommand{\divv}{\operatorname{div}}
\newcommand{\colto}{\colon\!}

\newcommand{\lsb}{[\![}
\newcommand{\rsb}{]\!]}

\newtheorem{remark}[theorem]{Remark}

\newtheorem{problem}{Problem}
\crefname{problem}{Problem}{Problems}

\author{
Ioannis P.~A.~Papadopoulos\thanks{%
Mathematical Institute, University of Oxford, UK}
(\email{ioannis.papadopoulos@maths.ox.ac.uk})
\and
Michael Hinterm\"uller\thanks{%
Weierstrass Institute and Humboldt-Universit\"at zu Berlin, Berlin, Germany}
(\email{hintermueller@wias-berlin.de})
}

\headers{Mesh-dependence in active set strategies}{I.~P.~A.~Papadopoulos and M.~Hinterm\"uller}

\title{Mesh-dependent iteration count growth in primal-dual active set strategies}
\begin{document}

\maketitle

\begin{abstract}
Primal-dual active set strategies (PDAS) are popular iterative solvers for mixed complementarity problems such as constrained optimization problems with pointwise inequality constraints. Examples include the reduced-space active set algorithm \texttt{vinewtonrsls} found in PETSc. When applied to discretized infinite-dimensional problems, PDAS exhibit local superlinear convergence thanks to their equivalence to a semismooth Newton method (SSN). However, for many problem classes the number of iterations, to reach convergence, grows without bound under mesh refinement. In this paper we numerically study PDAS iteration counts on uniformly refined meshes for obstacle problems, Signorini problems, and related models. As the mesh size tends to zero, PDAS applied to Signorini-type problems lose their local superlinear convergence, resulting in linear growth of the iteration count (adding some iterations with each refinement).  For obstacle problems, PDAS stagnates, leading to exponential iteration growth (asymptotically doubling with each refinement). We explain these phenomena by (i) proving that, for obstacle problems, nodal degrees of freedom only peel away from the obstacle layer-by-layer during the deactivation phase, (ii) deriving a general global convergence rate for PDAS that depends on the magnitude of dual feasibility violation, and (iii) demonstrating why, in the infinite-dimensional setting, this leads to a well-defined solver, but without local superlinear convergence, for some problems yet divergence for others.
\end{abstract}

\vspace*{-2mm}
\begin{keywords}
primal-dual active set strategy, mesh-dependent convergence, obstacle problem, Signorini problem
\end{keywords}

\vspace*{-2mm}
\begin{AMS}
35R35, 49J40, 65K10, 65K15, 90C33, 90C53
\end{AMS}
\vspace*{-2mm}

\section{Introduction}
\label{sec:introduction}

Optimization problems involving both partial differential operators and pointwise inequality constraints arise in numerous applications such as continuum mechanics, finance, the life sciences, and imaging. For a broad class of such models, discretization gives rise to the following finite-dimensional quadratic programming problem:
\begin{align}
\min_{\bu \in \mathbb{R}^n} \frac{1}{2} \bu^\top A \bu - \vectt{b}^\top \bu \;\; \text{subject to} \;\; \bphi^l_i \leq \bu_i \leq \bphi^u_i, \;\; i \in \{1 \colto n\},
\label{eq:quad-program}
\end{align}
where $n \in \mathbb{N}$, $\{1\colto n\} \coloneqq \{1,\dots,n\}$, $A \in \mathbb{R}^{n \times n}$, $\vectt{b} \in \mathbb{R}^n$, and $\bphi^l, \bphi^u \in \mathbb{R}^n$ denote given lower and upper bounds, respectively. The set of indices corresponding to the active constraints in the solution vector is called the \emph{active set} (cf.~\Cref{sec:pdas}). %

Primal-dual active set strategies (PDAS) are a popular class of algorithms for solving mixed complementarity problems such as \cref{eq:quad-program}. In many cases they can be related to Newton's method for iteratively solving these problems. Their success is unsurprising. They are easy to implement, feature local superlinear convergence, and deliver feasible solutions unlike penalty method counterparts.  This has led to their ubiquitous adoption throughout the optimization community.\footnote{The name ``primal-dual active set" for a solver of the type considered here appears in \cite{bergounioux1999}. However, closely related algorithms appeared well before 1999 (e.g.~\cite{hoppe1987}), and the broader idea of exploiting active set identification to accelerate convergence has a long history.}

The analysis of their favourable convergence properties was primarily conducted in the early 2000's. In particular, a 2003 paper by Hinterm\"uller, Ito, and Kunisch \cite{hintermuller2002} studied a particular instance of a PDAS (hereon referred to as HIK) applied to \cref{eq:quad-program}. Under certain conditions (e.g.~$A$ is symmetric positive-definite), they showed that HIK is equivalent to a \emph{semismooth} Newton method (SSN). Hence it automatically inherits the local superlinear convergence properties that make Newton methods so popular.\footnote{Developing SSNs for MCPs was a core research focus in the 90's and early 2000's \cite{benson2006,chen1996,de1996,dirkse1997,harker1990,munson2001} and extremely efficient implementations can be found in a number of packages e.g.~the PATH solver \cite{dirkse1995} and the reduced space active-set strategy \cite[Alg.~3.2]{benson2006} implemented as \texttt{vinewtonrsls} in PETSc \cite{petsc}.}  In the same paper, the analysis was extended to a number of infinite-dimensional problems including optimal control problems with box constraints on the \emph{control}. Further infinite-dimensional results for SSN were then established in \cite{hintermuller2004mesh}. However, there are several classical infinite-dimensional problems that do not satisfy the necessary conditions for justifying the method in function space. Examples include
\begin{enumerate}
\item obstacle problems, \label{item:problem1}
\item optimal control with box constraints on the state,\label{item:problem2}
\item Signorini problems (i.e.~contact problems).\label{item:problem3}
\end{enumerate} %
For these problems, the HIK algorithm does not define a Newton-type method in infinite dimensions. In fact, for the obstacle problem, all iterates after the second HIK iteration are ill-defined (see \Cref{sec:ssn}). Nevertheless, in practice, problems \labelcref{item:problem1}--\labelcref{item:problem3} are first discretized and then solved. On the resulting finite-dimensional problems, HIK converges with a local superlinear rate owing to its possible interpretation as an SSN. But here lies the surprise. The lack of validity of HIK (and all other PDAS) on the infinite-dimensional level manifests as iteration counts that grow with each refinement of the discretization. This observation is a side-effect  of solver \emph{mesh dependence} where the convergence properties of a solver degrade as the discretization is refined cf.~\cite[Sec.~1]{ito2003}, \cite[Sec.~4]{hintermuller2002}, \cite[Sec.~5.2.2]{bergounioux2002}, \cite[Sec.~7.5]{graser2009}, \cite[Tab.~1,3,4]{hintermuller2003}, \cite[Sec.~4.1]{farrell2020}, \cite[Fig.~3a]{dokken2025}. The aim of this paper is to publicize this phenomenon, collect numerical experiments, and provide theoretical insight. 

The potential for mesh-dependent convergence in PDAS does not appear to be widely known beyond the community working on infinite-dimensional nonsmooth optimization. Although warnings can be found in some texts, a thorough investigation has not been previously circulated. It may surprise even readers familiar with this phenomenon that the iteration count growth rate can vary from mildly linear to exponentially fast depending on the problem at hand.  Since PDAS are often applied to problems where the pointwise constraints are just one of many complications, it may be overlooked that growing iteration counts are due to the PDAS solver rather than other causes.  With each mesh refinement, a single PDAS iteration carries a higher computational cost as the linear systems become larger and potentially ill-conditioned. Thus increasing iteration counts may quickly cause resolution bottlenecks.

This paper focuses on ``vanilla" HIK applied to discretized quadratic programming problems of the form \cref{eq:quad-program}. However, PDAS solvers are often used for nonconvex problems and combined with continuation schemes, linesearches or other globalization schemes, and inexact solves. We hope that understanding the fundamental mesh-dependent properties of the vanilla form  applied to quadratic problems with simple constraints should help identify and remedy mesh-dependent iteration counts in modified PDAS applied to harder problems. We remark that mesh-dependent convergence comes in various flavours and the mesh dependence of PDAS is different from other solvers analysed in the past \cite{schwedes2017mesh}. Some  modifications of PDAS do significantly hinder iteration count growth. We briefly mention two such techniques:
\begin{itemize}
\item \textbf{Multigrid.} Multigrid strategies that allow for the fine-level active set to be altered by coarse-level corrections often can lead to much slower growing iteration counts cf.~\cite{graser2009,brandt1983,hackbusch1983,hoppe1987,hoppe1994,kornhuber1994, kornhuber1996, bueler2024}.
\item \textbf{Regularization.} Regularizing may allow for the PDAS to be valid on the infinite-dimensional level and admit local superlinear convergence cf.~\cite[Ch.~9]{ito2008}, \cite{hintermuller2006feasible,ito2003}.
\end{itemize}

The structure of the paper is as follows.  In \Cref{sec:problem} we introduce the obstacle, thin obstacle, and Signorini problems, respectively, and fix specific examples in \cref{prob:obstacle,prob:thin,prob:signorini}. These examples will serve as the primary target of our analytical and numerical investigations. The HIK algorithm is then described in \Cref{sec:pdas}. Numerical experiments, completed with iteration count tables for \cref{prob:obstacle,prob:thin,prob:signorini}, are provided in \Cref{sec:experiments} and a novel global convergence result for the finite-dimensional problem is derived in \cref{th:convergence-rate} of \Cref{sec:convergence}. Our quest to analyse HIK's mesh-dependent convergence leads to two differing discoveries, one for the obstacle problem and the other for the thin obstacle and Signorini problems:

\noindent \textbf{Obstacle problem.} The numerical experiments reveal that the mesh-dependent convergence manifests as a \emph{layer-by-layer peeling} of the degrees of freedom (dofs) from the obstacle with each iteration during the deactivation phase\footnote{The deactivation phase is where the dofs are switching from the active to the inactive set defined in \Cref{sec:pdas}. It is also sometimes called the inactivation phase \cite[Sec.~5]{graser2009}.} cf.~\cite[Sec.~1]{ito2003}, \cite[Sec.~4.3]{bergounioux2002}. We illustrate this remarkable observation in \cref{fig:peeling} and subsequently analyse it in \cref{th:peeling} of \Cref{sec:peeling}. Peeling leads to exponentially growing iteration counts such that HIK diverges as the mesh size $h \to 0$. By introducing the infinite-dimensional generalization of the HIK algorithm in \Cref{sec:ssn}, we show that the regularity of the Lagrange multiplier iterates is insufficient to define the active and inactive sets after the second iteration causing the divergence to occur. This happens even for obstacle problems where the Lagrange multiplier solution has higher, i.e., $L^2$-regularity, than the usual $H^{-1}$-regularity.

\smallskip
\noindent \textbf{Thin obstacle and Signorini problems.} In stark contrast, peeling does not occur for the thin obstacle and Signorini problems. Nevertheless, the iteration counts exhibit a mild linear growth, but the convergence plots in \Cref{sec:experiments} suggest an initial regime of mesh-independent linear convergence. Superlinear convergence occurs once the correct active set is identified. However, this identification occurs later with each uniform refinement.  We remark why peeling does not occur in \cref{rem:no-peeling} of \Cref{sec:peeling}. Subsequently, in \Cref{sec:ssn} we show that HIK is, in fact, well-defined for the two-dimensional thin obstacle problem in infinite dimensions thanks to the better regularity of the Lagrange multiplier iterates. We then prove that the global convergence rate of \cref{th:convergence-rate} extends to the infinite-dimensional setting resulting in guaranteed sublinear convergence in \cref{th:convergence-rate-inf}. Although HIK still converges, it is no longer equivalent to an SSN in the limit and, hence, the iteration count growth is due to the loss of local superlinear convergence as the mesh size $h \to 0$. 

Our conclusions are given in \Cref{sec:conclusions} where we discuss potential outcomes of the analysis. In the appendix, we provide further numerical experiments on HIK iteration counts. In particular, we study the ineffectiveness of lifting of the multiplier into the discrete primal space, grid sequencing, biactive obstacle problems, and optimal control problems.

We end the introduction by summarizing the various growth rates of different problems explored in this paper in \cref{tab:growth-rates}. We solely consider simple problems with smooth data on convex polygonal domains. Growth rates for harder problems may vary. For instance, grid-sequencing will likely have iteration count growth if the obstacle is not well represented on coarse meshes.
\begin{table}[h!]
\renewcommand{\arraystretch}{1.1}
\centering
\small
\scalebox{1.0}{
\begin{tabular}{|l|l|}
\hhline{-|-|}
\rowcolor{lightgray!10} \multicolumn{1}{|l|}{Problem} &Growth Rate\\ 
\hline
Optimal Control with Control Constraints & No Growth\\ 
Obstacle Problem with Grid-Sequencing & No Growth*\\ \hline
Thin Obstacle Problem (Obstacle of codim 1)  &  Linear (additive)\\
Scalar Signorini & Linear (additive)\\
Signorini (Linear elasticity) & Linear (additive)\\ \hline
Obstacle Problem & Exponential (multiplicative)\\
Optimal Control with State Constraints & Exponential (multiplicative)\\
Optimal Control with Constraints on an $H^1$-Control & Exponential (multiplicative)\\ \hline
\end{tabular}}
\caption{A summary of the observed iteration count growth rates for problems considered in \Cref{sec:experiments} and \Cref{sec:further-experiments}. *Grid-sequencing necessitates solves on all the parent meshes.} 
\label{tab:growth-rates}
\end{table}

\smallskip
\noindent \textbf{Data availability.} Software to generate all the tables and figures in this article may be found at PrimalDualActiveSet.jl \cite{obstacle.jl}. The specific version used in this paper is archived on Zenodo \cite{zenodo}. The implementation is written in the open-source language Julia \cite{Bezanson2017}. Visualizations are made in Plots.jl or ParaView \cite{paraview}.

\section{Problem formulation}
\label{sec:problem}

In this work we consider open, bounded and Lipschitz domains	 $\Omega \subset \mathbb{R}^d$ for $d \in \mathbb{N}$. We denote the usual Lebesgue spaces by $L^p(\Omega)$, $p \in [1,\infty]$, and the Sobolev spaces by $W^{s,p}(\Omega)$, $s \geq0$, $p \in [1,\infty]$ \cite{Adams2003}. We define the Hilbert spaces $H^s(\Omega) \coloneqq W^{s, 2}(\Omega)$ and $H^1_0(\Omega) \coloneqq \{ u \in H^1(\Omega) : u|_{\partial \Omega} = 0\}$ where $|_{\partial \Omega} : W^{1,p}(\Omega) \to W^{1-1/p,p}(\partial \Omega)$ denotes the trace operator. We use the notation $H^s(\Omega)^d$ for a vector-valued Sobolev space with $d$ copies and the inner product $(\cdot,\cdot)$ without a subscript always denotes the $L^2(\Omega)$-inner product. We use $| \cdot |$ to denote the size of a finite-dimensional set, the Lebesgue measure in $\mathbb{R}^d$, or the Euclidean $\ell^2$-norm depending on the context. For Banach spaces $X$ and $Y$, we define $\mathcal{L}(X,Y)$ as the space of bounded linear operators $X \to Y$.

\cref{prob:obstacle,prob:thin,prob:signorini} introduced later in this section will be discretized with the finite element method (FEM). We assume that the domain $\Omega$ is triangulated with a quasi-uniform sequence of meshes $\{\mathcal{T}_h\}_h$ \cite[Def.~4.4.13]{Brenner2008} containing simplicial or tetrahedral cells in two and three dimensions, respectively. Unless otherwise stated, we employ a continuous piecewise linear finite element discretization (P1-FEM) and denote the finite element space, with any Dirichlet boundary conditions already incorporated, by $U_h$. We denote the dual space of $U_h$ by $U_h^*$. When stating estimates, we will often denote a generic constant by $C>0$. The value of $C$ may change from line-to-line, without relabelling, but is always uniformly bounded as the mesh size $h \to 0$.

We now introduce the obstacle and Signorini problem \cite{fichera1964, stampacchia1964}, respectively.

\subsection{Obstacle problem}
\label{sec:problem:obstacle}
The obstacle problem describes the vertical displacement of an elastic membrane constrained to remain below (or potentially above) a specified obstacle.  One seeks a minimizer in $K \coloneqq \{v \in H^1_0(\Omega) : u \leq \varphi \; \text{a.e.~in} \; \Omega \}$, that satisfies
\begin{align}
\min_{u \in K} \int_\Omega \frac{1}{2} |\nabla u|^2 - f u \, \mathrm{d}x,
\label{eq:ob1}
\end{align}
for a given upper bound $\varphi \in H^1(\Omega)$, with $\varphi|_{\partial\Omega}\geq 0$, and forcing term $f \in L^2(\Omega)$. %
The unique minimizer $u \in K$ satisfies the first-order optimality conditions: Find $(u, \lambda) \in H^1_0(\Omega) \times H^{-1}(\Omega)$ that satisfies, for all $v \in H^1_0(\Omega)$ \cite[Sec.~1.4]{ito2008}
\begin{align}
\begin{cases}
(\nabla u, \nabla v) + \langle \lambda, v \rangle = (f,v),\\
u \leq \varphi, \; \langle \lambda, u-\varphi \rangle = 0, \; \text{and} \; \langle \lambda, w \rangle \geq 0 \; \text{for all } 0 \leq w \in H^1_0(\Omega),
\end{cases}
\label{eq:ob3}
\end{align}
where $H^{-1}(\Omega)$ is the dual space of $H^1_0(\Omega)$, and $\langle \cdot, \cdot \rangle$ denotes the duality pairing between $H^{-1}(\Omega)$ and $H^1_0(\Omega)$.  Here, $\lambda$ denotes the Lagrange multiplier associated with the unilateral bound constraint. The bottom three conditions in \cref{eq:ob3} are known as primal feasibility, complementarity, and dual feasibility, respectively.

If $f \in W^{1,\infty}(\Omega)$ and $\varphi \in H^{5/2}(\Omega)$ with $\varphi > 0$ a.e.~in a neighbourhood around $\partial \Omega$, then $u \in H^{s}(\Omega) \cap H^1_0(\Omega)$, for any $s < 5/2$ \cite[Th.~3]{brezis1971} and, therefore, $\lambda \in H^{s-2}(\Omega) \subset L^p(\Omega)$ with $p=\infty$ for $d \in \{1,2\}$ and any $p<\infty$ when $d=3$. This extra regularity is sharp in the sense that $u \not\in H^{5/2}(\Omega)$ for many choices of smooth domain, data, and obstacle cf.~\cref{prob:obstacle}.

Given this extra regularity on $u$ and $\lambda$, we notice that $\lambda \in L^2(\Omega)$ and thus the dual feasibility condition $\langle \lambda, w \rangle \geq0$ for all $0\leq w\in H_0^1(\Omega)$ may be reinterpreted as $\lambda \geq 0$ a.e.~in $\Omega$. In turn this allows the first-order optimality conditions \cref{eq:ob3} to be rewritten as the nonsmooth system, for all $v \in H^1_0(\Omega)$,
\begin{align}
\begin{cases}
(\nabla u, \nabla v) + \langle \lambda, v \rangle = (f,v),\\
\lambda - (\lambda + c(u-\varphi))_+ = 0 \; \text{a.e.~in} \; \Omega,
\end{cases}
\label{eq:ob4}
\end{align}
for any fixed $c>0$, where $(v)_+ \coloneqq \max(0, v)$. %

We now fix two specific obstacle problems that will be the focus of our numerical and analytical investigations in later sections. The first will be simply referred to as the obstacle problem throughout, and the second as the thin obstacle problem.

\begin{problem}[Obstacle problem with a constant obstacle]
\label{prob:obstacle}
For $d\in\{1,2,3\}$, we choose the domain, obstacle, and forcing term as
\begin{align}
\Omega = (0,1)^d, \quad \varphi \equiv 1, \quad f \equiv 20,
\label{eq:setup}
\end{align}
respectively. This constitutes the simplest setup one could envisage and was chosen to alleviate any concerns that mesh dependence occurs due to a complicated choice of domain, obstacle, or forcing term. When $d=1$, the solution is the piecewise quadratic function:
\begin{align}
u(x) =
\begin{cases}
-10x^2 + 20 \alpha x & \text{if} \; x \in [0, \alpha),\\
1 & \text{if} \; x \in [\alpha, 1-\alpha),\\
-10x^2 + 20(1-\alpha)x + 10-20(1-\alpha) & \text{if} \; x \in [1-\alpha, 1],
\end{cases}
\end{align}
where $\alpha = 1/\sqrt{10}$. For reference, we plot the solutions for $d \in \{1,2\}$ in \cref{fig:solns} (left, middle). 
\end{problem}

\smallskip
\begin{problem}[Thin obstacle problem]
\label{prob:thin}
Here we consider an obstacle problem with a \emph{thin} obstacle of codimension one. For $(x_1,\dots,x_d) = x \in \mathbb{R}^d$,
 \begin{align}
 \Omega=(0,1)^d, \quad f \equiv 20, \quad
\text{and}\quad K:=\{v\in H_0^1(\Omega)~:~v|_{ \Gamma_{1/2}}\leq 1\},
\label{eq:setup:scalar}
\end{align}
where $|_{\Gamma_{1/2}}$ denotes the trace operator from $\Omega$ to $\Gamma_{1/2}\coloneqq \{x\in\overline{\Omega}~:~x_1=1/2\}$.
In \cref{fig:solns} (right) we plot the solution for $d =2$.
\end{problem}

\begin{figure}[h!]
\centering
\begin{minipage}{0.32\textwidth}
\includegraphics[width = \textwidth]{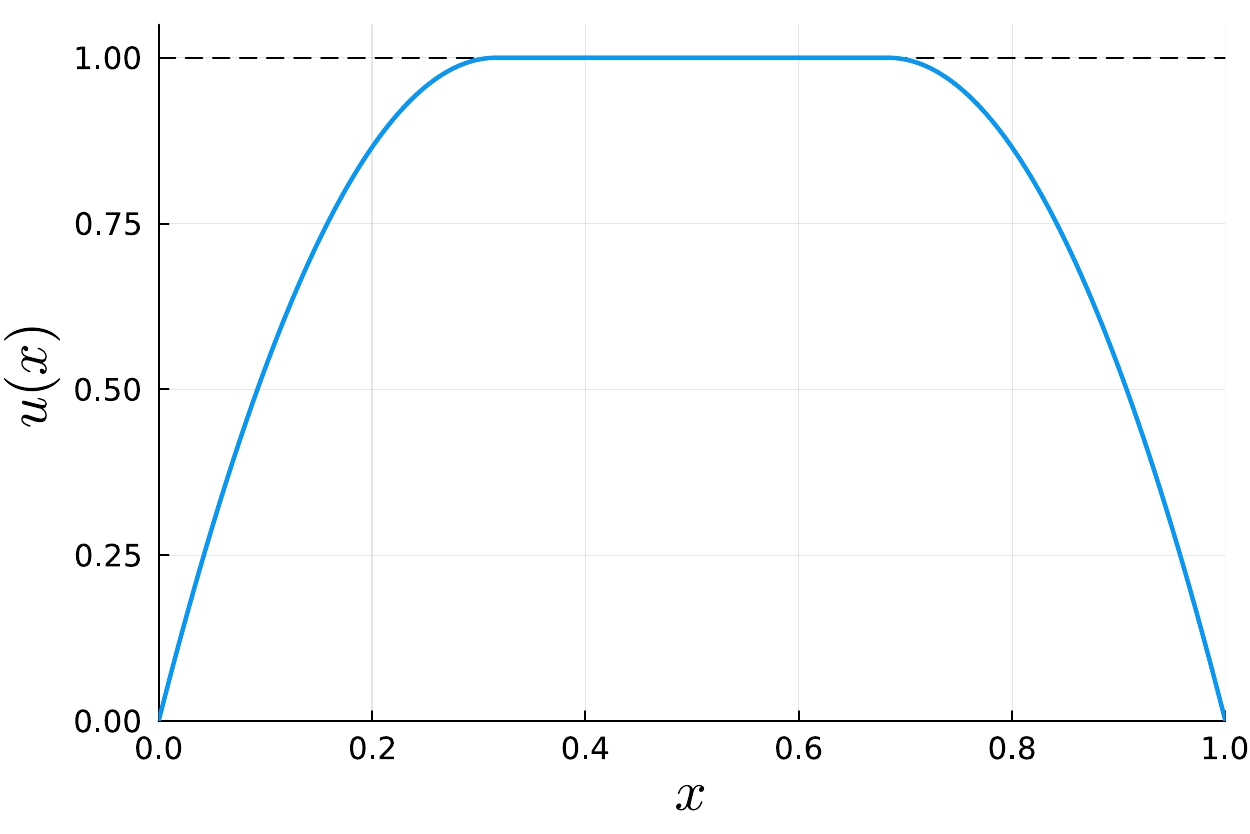}
\end{minipage}\;\;
\begin{minipage}{0.32\textwidth}
\vspace{-4mm}
\hspace*{5mm}
\includegraphics[width =0.75 \textwidth]{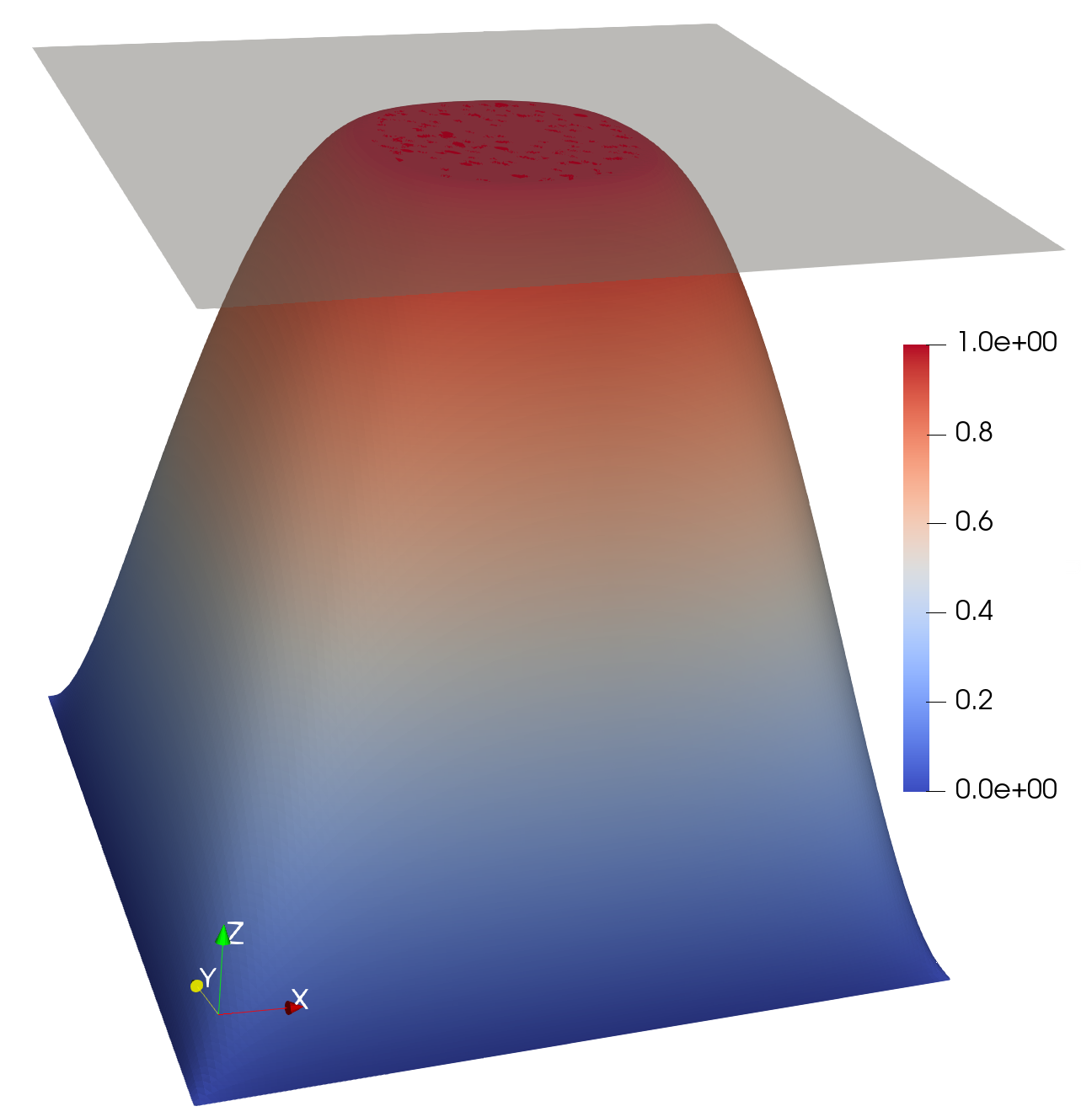}
\end{minipage}
\begin{minipage}{0.32\textwidth}
\vspace{-2mm}
\includegraphics[width =0.68 \textwidth]{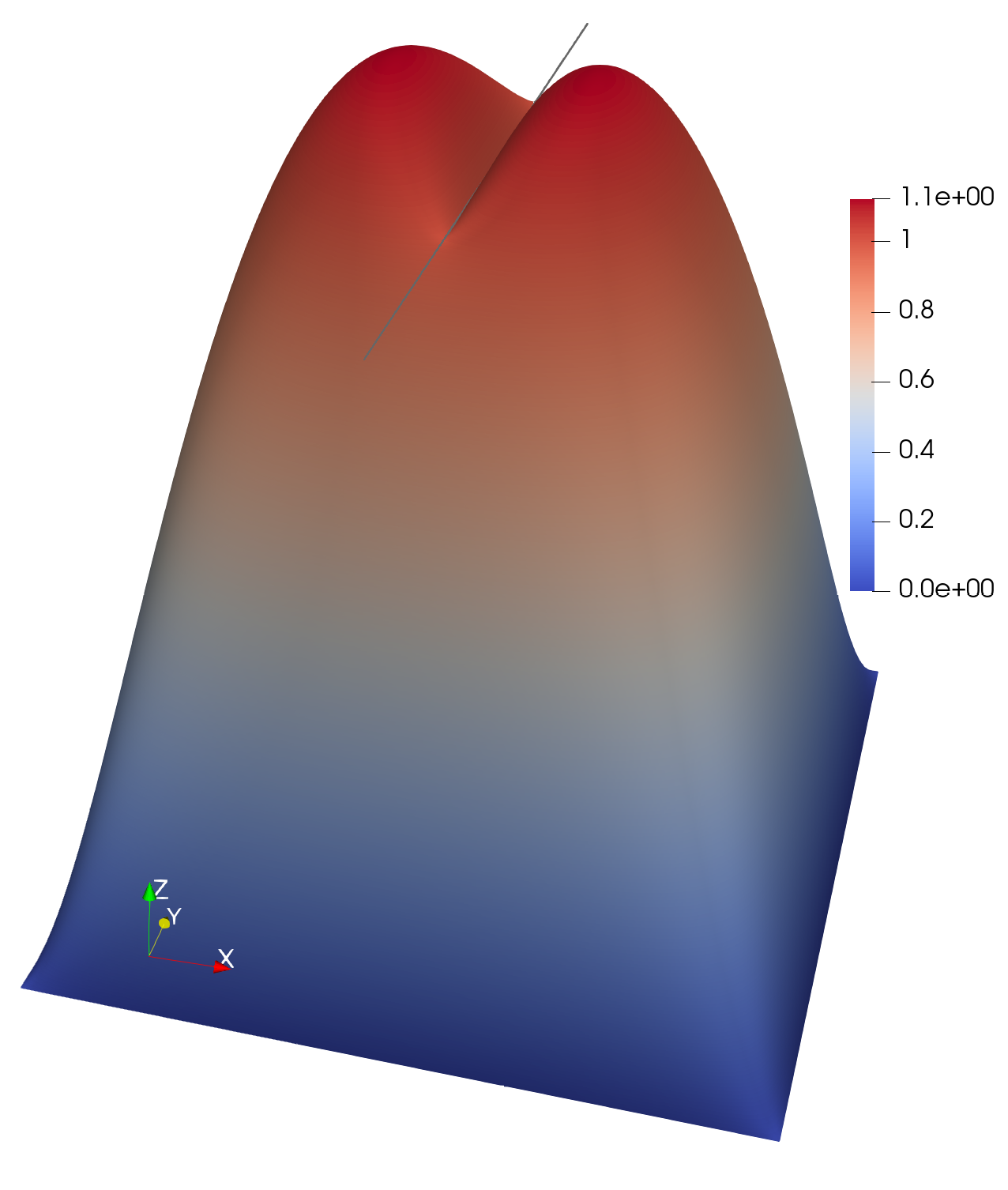}
\end{minipage}
\caption{(Left and middle) The solutions to the obstacle problem of \cref{prob:obstacle} for $d \in \{1,2\}$. The dashed line and grey surface in the 1D and 2D solutions, respectively, represent the obstacle. (Right) The solution of the thin obstacle problem of \cref{prob:thin}. The grey line at $x_1=1/2$ represents the thin (codimension one) obstacle.}
\label{fig:solns}
\end{figure}
Note that while $\Gamma_{1/2}$ has zero ($d$-dimensional) Lebesgue measure, it has positive capacity such that $K$ manifests a genuine constraint in $H_0^1(\Omega)$. For more on capacity theory see, e.g., \cite{kinderlehrer2000}.

\subsection{Signorini problem}
The Signorini problem models the deformation of a linearly elastic material in contact with a rigid body. Pointwise constraints are only enforced on a contact boundary. As such, the feasible set is $K \coloneqq \{ v \in H^1_{\Gamma_D}(\Omega)^d : v|_{\Gamma_C} \cdot n \leq \varphi \; \text{on} \; \Gamma_C\}$ where $d \in \{2,3\}$, $\Gamma_D \subset \partial \Omega$ is nonempty, $H^1_{\Gamma_D}(\Omega)^d \coloneqq \{ v \in H^1(\Omega)^d : v|_{\Gamma_D} = 0 \;
\}$, $\varphi \in H^{1/2}(\partial \Omega)$ is a prescribed gap function that defines how far a point on the contact boundary $\Gamma_C \subset \partial \Omega$ may deflect, and $n$ is the outward unit normal to $\partial\Omega$. We shall assume that $\bar \Gamma_C$ is compactly embedded in $\partial \Omega \backslash \bar \Gamma_D$ and that $\Gamma_F:=\partial \Omega\setminus (\bar{\Gamma}_D\cup\bar{\Gamma}_C)$ is the {\it free} boundary yielding natural boundary conditions on $u$. Solving the Signorini problem requires to find $u \in K$ satisfying
\begin{align}
    \min_{u \in K} \int_\Omega  \mu |\varepsilon(u)|^2 + \frac{\ell}{2} |\divv(u)|^2 - f \cdot u \, \dx,
    \label{eq:signorini}
\end{align}
where $\varepsilon(u) = (\nabla u + (\nabla u)^\top)/2$ is the symmetrized gradient, $\mu$ and $\ell$ are the Lam\'e parameters, and $f \in L^2(\Omega)^d$ is a given body force. Let $M_+ \coloneqq \{ \lambda \in H^{-1/2}(\Gamma_C) : \langle \lambda, w \rangle_{\Gamma_C} \geq 0 \; \text{for all} \; 0\leq w \in H^{1/2}(\Gamma_C)\}$, where $\langle \cdot, \cdot \rangle_{\Gamma_C}$ denotes the duality pairing of $H^{-1/2}(\Gamma_C)$ and $H^{1/2}(\Gamma_C)$. Then there exists a unique Lagrange multiplier $\lambda \in M_+$ such that the minimizer $u$ also satisfies, for all $v \in H^1_{\Gamma_D}(\Omega)^d$ and $0 \leq  w \in H^{1/2}(\Gamma_C)$,
\begin{align}
\begin{cases}
2 \mu (\varepsilon(u), \varepsilon(v)) + \ell (\divv(u), \divv(v)) + \langle \lambda, v|_{\Gamma_C} \cdot n \rangle_{\Gamma_C} = (f, v),\\
u|_{\Gamma_C} \cdot n \leq \varphi, \; \langle \lambda, u|_{\Gamma_C} \cdot n -\varphi \rangle_{\Gamma_C} = 0, \; \text{and} \; \langle \lambda, w \rangle_{\Gamma_C} \geq 0,
\end{cases}
\label{eq:signorini-mcp}
\end{align}
with $\varepsilon\cdot n=0$ on $\Gamma_F$ (assuming zero traction) and $\varepsilon\cdot n=-\lambda$ on $\Gamma_C$ in the sense of the trace.

Under suitable smoothness of the domain and problem data (that we omit here), we have that $\lambda \in L^2(\Gamma_C)$ and, therefore, \cref{eq:signorini-mcp} may be reformulated as the nonsmooth equation, for all $v \in H^1_{\Gamma_D}(\Omega)^d$ and any $c>0$:
\begin{align}
\begin{cases}
2 \mu (\varepsilon(u), \varepsilon(v)) + \ell (\divv(u), \divv(v)) + \langle \lambda, v \cdot n \rangle_{\Gamma_C} = (f, v),\\
\lambda - (\lambda + c(u|_{\Gamma_C} \cdot n-\varphi))_+ = 0 \; \text{a.e.~on} \; \Gamma_C.
\end{cases}
\label{eq:signorini-nonsmooth}
\end{align}

We now fix a specific instance of a Signorini problem that will be the focus of our numerical investigations in \Cref{sec:experiments}.

\begin{problem}[Signorini]
\label{prob:signorini}
For $d \in \{2,3\}$ and $(x_1,\dots,x_d) = x \in \mathbb{R}^d$, we choose the parameters
\begin{align}
\begin{gathered}
\Omega = (0,5) \times (0,1)^{d-1}, 
\;\; \Gamma_D = \{x\in\bar\Omega:x_1 = 0 \; \text{or} \; x_1=5\}, \\
\Gamma_C = \{x\in\bar\Omega: x\in (0,5)\times (0,1)^{d-2}\times \{1\}\}, \;\;
\quad n=e_d,
\quad \varphi \equiv 1/2,\\  
E=200, 
\quad \nu = 0.3,
\quad
 \ell = \frac{E \nu}{(1+\nu)(1-2\nu)},
\quad \mu = \frac{E}{2(1+\nu)},
\quad f =10e_d,
\end{gathered}
\label{eq:setup:signorini}
\end{align}
where $e_d \in \mathbb{R}^d$ is the unit vector $(0,\dots,0,1)^\top$. This models a two- and three-dimensional linearly elastic beam clamped on its left- and right-hand side edges in 2D or faces in 3D. A body force deforms the beam upwards but the beam cannot penetrate through an obstacle positioned at the plane $x_d = 1/2$.  We plot both the two- and three-dimensional solutions in \cref{fig:signorini}.
\end{problem}
\begin{figure}[h!]
\centering
\includegraphics[height =0.2 \textwidth]{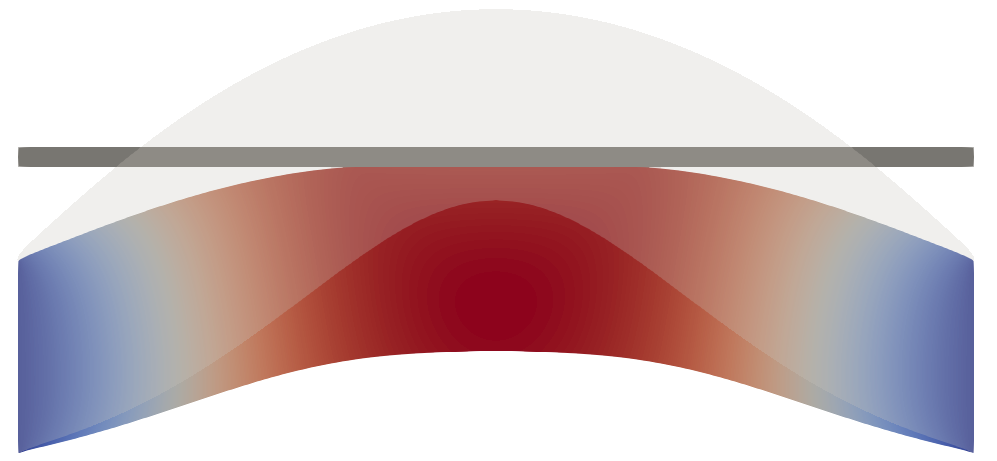}
\includegraphics[height =0.18 \textwidth]{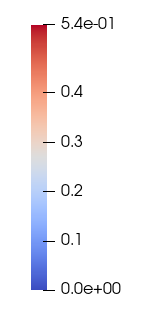}
\includegraphics[height =0.2 \textwidth]{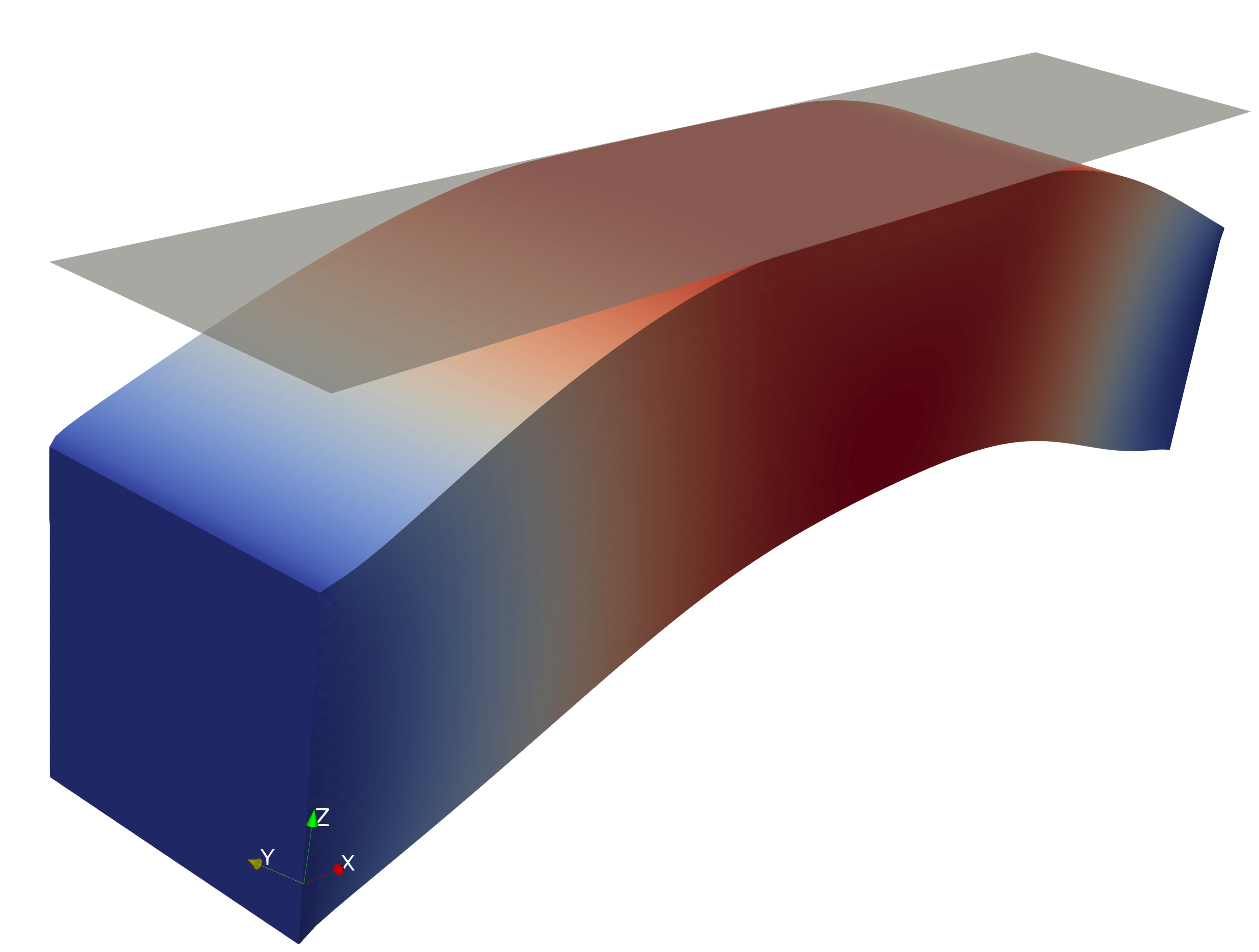}
\caption{The solutions to the Signorini problem of \cref{prob:signorini} in two dimensions (left) and three dimensions (right). The color denotes the magnitude of the displacement $u$ as indicated on the colorbar and the thin horizontal beams represent the obstacle. In the 2D solution the transparent beam is the solution without the pointwise constraint.}
\label{fig:signorini}
\end{figure}

\section{Primal-dual active-set strategies}
\label{sec:pdas}

In this section we introduce the PDAS described in \cite{hintermuller2002}, called HIK.\footnote{Other PDAS and SSNs often result in equivalent iterates. For instance, in exact arithmetic, the reduced active set strategy \cite[Alg.~3.2]{benson2006} is almost equivalent to HIK \cite[App.~A]{papadopoulos2021}.} %

A key aspect of the low-order P1-FEM nodal basis discretization considered in this paper is that entrywise box constraints on the dofs imply pointwise box constraints on the finite element function itself. In other words: If $u_h$ denotes the finite element discretization of $u$ and $\vectt{u}_h \in \mathbb{R}^N$ denotes the corresponding discrete coefficient vector, then $[\vectt{u}_h]_i \leq 1$ for $i \in \{1 \colto N\}$ implies $u_h(x) \leq 1$ for all $x \in \Omega$. Making a similar simplification of box constraints for higher-order finite element spaces is possible cf.~\cite{banz2015, kirby2024}. For the remainder of this section we fix a mesh and thus drop the subscript $_h$ in the coefficient vector for brevity. 

Below we focus on deriving HIK for the discretized obstacle problem. The extension to the Signorini problem is considered afterwards. The first step to implement a PDAS is to discretize the nonlinear system \cref{eq:ob4}:
\begin{subequations}
\label{pdas:1}
\begin{align}
A\vectt{u} + B \vectt{\lambda} &= \vectt{b}, \label{pdas:1a}\\
\vectt{\lambda} - (\vectt{\lambda} + c(\vectt{u} - \vectt{\varphi}))_+ & = \vectt{0}, \label{pdas:1b}
\end{align}
\end{subequations}
Here $\vectt{b} \coloneqq M \vectt{f}$ is the load vector and $\vectt{f}$ and $\vectt{\varphi} \in \mathbb{R}^N$ are the discrete coefficient vectors for $f_h$ and $\varphi_h$, respectively, and $c>0$. The max function of a vector is understood component-wise, i.e.~$[(\vectt{v})_+]_i = \max(0,\vectt{v}_i)$. The matrix $A$ is the stiffness matrix $A_{ij} = (\nabla \phi_i, \nabla \phi_j)$ and $M$ is the mass matrix $M_{ij} = (\phi_i,\phi_j)$ for the P1-FEM basis functions $\phi_i \in U_h$, $i \in \{1 \colto N\}$. The matrix $B$ is determined by the discretization of the Lagrange multiplier $\lambda$. To implement a PDAS, $B$ must be diagonal in order to perform a static condensation of the arising linear systems. Two popular choices are:
\begin{enumerate}
\item Discretize $\lambda$ with the dual P1-FEM basis $\{\psi_i\}_{i=1}^N$ such that $\psi_i(\phi_j) = \delta_{ij}$.\footnote{It is also common to choose the normalization $\psi_i(\phi_j) = \delta_{ij} \int_\Omega \phi_j \, \dx$.} In a sense, $\vectt{\lambda}_i$ represents the coefficient for a Dirac delta concentrated at the node $x_i \in \mathbb{R}^d$. This results in $B=I$. Moreover, \cref{pdas:1b} is still valid on the finite-dimensional level since $\langle \vectt{\lambda}_i \psi_i, w_h \rangle \geq 0$ for any $0 \leq w_h \in V_h$, if and only if $\vectt{\lambda}_i \geq 0$. In fact, this is the implicit choice for a discretize-then-optimize approach, i.e.~discretize \cref{eq:ob1} and derive the KKT conditions of the constrained finite-dimensional optimization problem \cite[Eq.~(2.3)]{hintermuller2002}.
\item Discretize $\lambda$ with the primal P1-FEM basis $\{\phi_i\}_{i=1}^N$. Then $\vectt{\lambda}$ represents the coefficient vector for a continuous P1-FEM function $\lambda_h$. In this case $B=M$. However, by choosing an inexact quadrature scheme and picking the quadrature points at the dofs, a mass-lumping occurs and $B$ becomes diagonal.
\end{enumerate}
For the remainder of this paper, we opt for the first choice of discretization for $\lambda$ such that $B=I$.

Our goal is to solve the discretized nonlinear system \cref{pdas:1}, with $B=I$. HIK is an iterative method that leverages \cref{pdas:1b} as a predictive strategy. We first introduce notation that extracts submatrices and vectors.

\begin{definition}[Slice notation]
\label{def:slice}
Consider the two sets $\mathcal{U}, \mathcal{Y} \subseteq \{1 \colto N\}$. Given a matrix $B \in \mathbb{R}^{N \times N}$ and a vector $\vectt{x} \in \mathbb{R}^N$, then
\begin{align}
B_{\mathcal{U}, \mathcal{Y}} \in \mathbb{R}^{|\mathcal{U}|\times|\mathcal{Y}|} \;\; \text{and} \;\; \vectt{x}_{\mathcal{U}} \in \mathbb{R}^{|\mathcal{U}|}
\end{align}
where $B_{\mathcal{U}, \mathcal{Y}}$ denotes the submatrix arising after deleting all the rows of $B$ with the indices $\{1 \colto N\} \backslash \mathcal{U}$ and all the columns with the indices $\{1 \colto N\} \backslash \mathcal{Y}$. Similarly $\vectt{x}_{\mathcal{U}}$ denotes the subvector arising after deleting all the rows of $\vectt{x}$ with the indices $\{1 \colto N\} \backslash \mathcal{U}$.
\end{definition}

At iteration $k$, given the current iterate $(\vectt{u}^k,\vectt{\lambda}^k)$ let $\mathcal{A}_h^k$ and $\mathcal{I}_h^k$ denote the active and inactive set predictions, respectively, defined by
\begin{align}
\mathcal{A}_h^k \coloneqq \{i:\vectt{\lambda}^k_i - c(\vectt{\varphi}_i - \vectt{u}^k_i) > 0\} \quad \text{and} \quad
\mathcal{I}_h^k \coloneqq\{i: \vectt{\lambda}^k_i - c(\vectt{\varphi}_i - \vectt{u}^k_i) \leq 0\}. \label{eq:inactive-active}
\end{align}
Then, the HIK iteration seeks $(\vectt{u}^{k+1}, \vectt{\lambda}^{k+1})$ satisfying
\begin{align}
\label{eq:pdas-update}
A \vectt{u}^{k+1} + \vectt{\lambda}^{k+1} = \vectt{b}, \quad
\vectt{u}^{k+1}_{\mathcal{A}_h^k} = \vectt{\varphi}_{\mathcal{A}_h^k}, \quad \text{and} \quad
\vectt{\lambda}^{k+1}_{\mathcal{I}_h^k} = \vectt{0}.
\end{align}

\subsection{HIK as an SSN}
\label{sec:pdas:semi}

The operator $(\cdot)_+ : \mathbb{R}^N \to \mathbb{R}^N$, $N \in \mathbb{N}$ in \cref{pdas:1b} is not Fr\'echet differentiable but does possess a so-called Newton derivative \cite[Def.~2.1]{brokate2022} and can be shown to be semismooth on $\mathbb{R}^N$ \cite[Lem.~3.1]{hintermuller2002}. One choice of a Newton derivative $G_+$ for $(\vectt{v})_+$ is
\begin{align}
[G_+(\vectt{v}) \vectt{w}]_i = 
\begin{cases}
\vectt{w}_i, & \text{if } \vectt{v}_i > 0,\\
0, & \text{otherwise}.
\end{cases}
\end{align}
Leveraging this Newton derivative for $(\cdot)_+$, an SSN for \cref{pdas:1} seeks the update $(\vectt{\delta}^k,\vectt{\varepsilon}^k)$ satisfying the linearized system 
\begin{align}
\begin{pmatrix}
A & I_N \\
-cE^{\mathcal{A}_h^k} & E^{\mathcal{I}_h^k}
\end{pmatrix}
\begin{pmatrix}
\vectt{\delta}^k\\
\vectt{\varepsilon}^k
\end{pmatrix}
=
-
\begin{pmatrix}
A \vectt{u}^k + \vectt{\lambda}^k - \vectt{b}\\
\vectt{g}^k
\end{pmatrix},
\label{eq:HIKstep}
\end{align}
where $I_{N}$ is the $N \times N$ identity matrix
and $E^{\mathcal{K}} \in \mathbb{R}^{N \times N}$ denotes the diagonal matrix with a one for diagonal indices $i \in \mathcal{K} \subseteq \{1 \colto N\}$ and zero otherwise. Moreover,
\begin{align}
\vectt{g}^k_i &= 
\begin{cases}
-c(\vectt{u}^k_i  - \vectt{\varphi}_i)  & \; \text{if} \; i \in \mathcal{A}_h^k,\\
 \vectt{\lambda}^k_i & \; \text{if} \; i \in \mathcal{I}_h^k.
\end{cases}
\end{align}
The subsequent SSN iterates are $\vectt{u}^{k+1} = \vectt{u}^{k} + \vectt{\delta}^k$ and $\vectt{\lambda}^{k+1} = \vectt{\lambda}^{k} + \vectt{\varepsilon}^k$. Via static condensation, the linear system solve in \cref{eq:HIKstep} may be reduced to \cite[Sec.~2]{hintermuller2002}
\begin{align}
A_{\mathcal{I}_h^k,\mathcal{I}_h^k} \vectt{\delta}^k_{\mathcal{I}_h^k}
=- A_{\mathcal{I}_h^k, \mathcal{A}_h^k} (\vectt{\varphi} - \vectt{u}^k)_{\mathcal{A}_h^k} - (A\vectt{u}^k - \vectt{b})_{\mathcal{I}_h^k}, \label{eq:reducedHIK}
\end{align}
condensing the linear system solve of size $2N$ to one of size $|\mathcal{I}_h^k| \leq N$  with the remaining unknowns set to
\begin{align}
\vectt{u}^{k+1}_{\mathcal{A}_h^k} = \vectt{\varphi}_{\mathcal{A}_h^k}, \quad \vectt{\lambda}^{k+1}_{\mathcal{A}_h^k} = -(A \vectt{u}^{k+1} -  \vectt{b})_{\mathcal{A}_h^k} \quad \text{and} \quad  \vectt{\lambda}^{k+1}_{\mathcal{I}_h^k} = \vectt{0}.
\label{eq:static}
\end{align}
A calculation reveals that $(\vectt{u}^{k+1}, \vectt{\lambda}^{k+1})$ satisfies the HIK iteration in \cref{eq:pdas-update} \cite[Sec.~2]{hintermuller2002}. Hence, the HIK and SSN applied to \cref{pdas:1} result in identical iterates. Moreover, an SSN for a semismooth equation possessing a uniformly bounded and invertible Newton derivative in a neighborhood of a solution is sufficient to extract local superlinear convergence properties cf.~\cite{hintermuller2002}, \cite[Th.~6.5]{ulbrich2002semismooth}. We observe this phenomenon in the numerical experiments in the next section.

We now summarize the HIK algorithm in \cref{alg:hik}.
\begin{algorithm}[h!]
\caption{HIK solver for \cref{pdas:1}}
\label{alg:hik}
\begin{algorithmic}[1]
\State{\textbf{input:} $A$, $\vectt{b} \coloneqq M \vectt{f}$, $\vectt{\varphi}$, $\vectt{u}^0$, $\mathrm{tol}$.}
\State{$\vectt{\lambda}^0 = \vectt{0}$, $\mathcal{A}^0_h = \emptyset$ and $\mathcal{I}^0_h = \{1 \colto N\}$.}
\State{Initialize $k=0$.}
\State{\textbf{repeat}}
\State{\quad Solve $A_{\mathcal{I}^k_h,\mathcal{I}^k_h} \vectt{\delta}^k_{\mathcal{I}^k_h}
=- A_{\mathcal{I}^k_h, \mathcal{A}^k_h} (\vectt{\varphi} - \vectt{u}^k)_{\mathcal{A}^k_h} - (A\vectt{u}^k - \vectt{b})_{\mathcal{I}^k_h}$.}
\State{\quad Assign $\vectt{u}^{k+1}_{\mathcal{I}^k_h} = \vectt{u}^k_{\mathcal{I}^k_h} + \vectt{\delta}^k_{\mathcal{I}^k_h}$ and $\vectt{u}^{k+1}_{\mathcal{A}^k_h} = \vectt{\varphi}_{\mathcal{A}^k_h}$.}
\State{\quad Assign $\vectt{\lambda}^{k+1}_{\mathcal{A}^k_h} = -(A \vectt{u}^{k+1} - \vectt{b})_{\mathcal{A}^k_h}$ and $\vectt{\lambda}^{k+1}_{\mathcal{I}^k_h} = \vectt{0}$.}
\State{\quad Assign $\mathcal{A}_h^{k+1} = \{i : \vectt{\lambda}^{k+1}_i + \vectt{u}^{k+1}_i - \vectt{\varphi}_i > 0\}$ and $\mathcal{I}^{k+1}_h = \{1 \colto N\} \backslash \mathcal{A}_h^{k+1}$.}
\State{\quad Assign $k \leftarrow k+1$.}
\State{\textbf{until} $\| \vectt{\lambda}^k - (\vectt{\lambda}^k + \vectt{u}^k - \vectt{\varphi})_+\|_{\ell^2} + \| A \vectt{u}^k + \vectt{\lambda}^k - \vectt{b}\|_{\ell^2} < \mathrm{tol}$}.
\end{algorithmic}
\end{algorithm}
Let us note that we chose $c=1$ in \cref{alg:hik}. Due to the steps 6 and 7 of the algorithm, this choice has only an effect on the first iteration, i.e., $k=0$.
Indeed, for $k\geq 1$ one has for $i\in\mathcal{I}^k_h$ that $\vectt{\lambda}_i^{k+1}=0$. Hence, only the sign of $\vectt{u}_i^{k+1}-\vectt{\varphi}_i$ decides on whether $i\in\mathcal{A}^{k+1}_h$ or not. Similarly, for $i\in\mathcal{A}^k_h$ we have $\vectt{u}_i^{k+1}=\vectt{\varphi}_i$ such that only the sign of $\vectt{\lambda}_i^{k+1}$ decides on whether $i\in\mathcal{I}^{k+1}_h$ or not.

\subsection{HIK for Signorini}
To extend HIK to the solution of the Signorini problem, we now rewrite the corresponding KKT conditions. Here we solely consider a Signorini problem where the normal at the contact boundary is $n \in \{e_1, \dots, e_d \}$, with a constant gap function $\varphi$, where $e_j$ is the unit vector with a one in the $j^\text{th}$-entry.\footnote{For more complex domains and gap functions, the normal on the deformed boundary depends on $u$ and the physics become nonlinear \cite[Sec.~7.1]{alphonse2025}.} The nonsmooth reformulation of the discretized KKT conditions of the Signorini problem is
\begin{subequations}
\label{pdas:3}
\begin{align}
A\vectt{u} + E_0\tilde{\vectt{\lambda}} &= \vectt{b}, \label{pdas:3a}\\
E_0 \tilde{\vectt{\lambda}} - (E_0\tilde{\vectt{\lambda}} + c(\vectt{u} - E_\infty \tilde{\vectt{\varphi}}))_+ & = \vectt{0}, \label{pdas:3b}
\end{align}
\end{subequations}
where $\vectt{u} \in \mathbb{R}^N$, and $\tilde{\vectt{\lambda}}, \tilde{\vectt{\phi}} \in \mathbb{R}^{N_C}$ such that $N_C<N$ is the number of dofs along the contact boundary. The matrix $A$ now denotes the stiffness matrix corresponding to the linear elasticity equation. Further, $E_0, E_\infty \in \mathbb{R}^{N \times N_C}$ denote the extension-by-zero and extension-by-infinity matrices, respectively. After choosing $\vectt{\lambda} = E_0 \tilde{\vectt{\lambda}}$ and $\vectt{\varphi} = E_\infty \tilde{\vectt{\varphi}}$ we recover the nonsmooth system \cref{pdas:1}. We could instead restrict $\vectt{u}$ to the boundary dofs and pose \cref{pdas:3b} over the boundary dofs only (ultimately reducing the size of the nonlinear system). However, after a static condensation, the linear systems to be solved are of the same size. %

\section{A numerical investigation}
\label{sec:experiments}
In this section we investigate the growth of the HIK iteration counts for the obstacle, thin obstacle, and Signorini problems, introduced in \cref{prob:obstacle,prob:thin,prob:signorini}, as we uniformly refine the mesh.

To summarize, we shall observe that the iteration counts grow exponentially fast for the obstacle problem, and mildly linearly for the thin obstacle and Signorini problems, respectively. The difference in behaviour is due to the codimensionality of the obstacle. %

\smallskip
\noindent \textbf{Experiment.} In \cref{tab:mesh-dependence} we report the number of HIK iterations (\cref{alg:hik}) to solve \cref{prob:obstacle,prob:thin,prob:signorini} and provide convergence plots in \cref{fig:convergence}. We use $n$ to parametrize the number of cells along an axis. On every mesh, we initialize HIK with a zero initial guess for $u$. The linear systems arising in \cref{alg:hik} are solved via a Cholesky factorization.

\begin{table}[h!]
\renewcommand{\arraystretch}{1.1}
\centering
\small
\begin{tabular}{|l|c|c|c|c|c|c|c|}
\hhline{-|-|-|-|-|-|-|-|}
\rowcolor{lightgray!10} \multicolumn{1}{|c|}{$n$} & $2^{4}$ & $2^{5}$ & $2^{6}$  &$2^{7}$ &$2^{8}$ &$2^{9}$ &$2^{10}$\\
\hline
Obstacle 1D ($n$)  & 5 & 8 & 14 &  27& 54 & 106 & 210 \\
Obstacle 2D  ($n \times n$)& 5 & 7 & 12 &  22& 43 &  83& 165 \\
Obstacle 3D  ($n \times n \times n$)& 5 & 7 & 10 &  15 & 27 &- & - \\
Thin Obstacle 2D ($n \times n$) & 3  &5  &  5&6 &8 & 8&9  \\
Thin Obstacle 3D ($n \times n \times n$) & 3  &4  & 6 & 7 & 8 & -&-  \\
\hline
\hline
\rowcolor{lightgray!10} \multicolumn{1}{|c|}{$n$} & $5$ & $10$ & $20$  &$40$ &$80$ &$160$ & $320$\\
\hline
Signorini 2D ($5n \times n$) &6  &7  &8  & 9 & 10 &12 & 12 \\
Signorini 3D ($5n \times n \times n$) & 7 &8  &9  & 11 &  12 &- & - \\ %
\hline
\end{tabular}
\caption{Mesh-dependent HIK iteration growth, when applied to the obstacle problem in \cref{prob:obstacle} with $d \in \{1,2,3\}$, the thin obstacle problem in \cref{prob:thin} with $d \in \{2,3\}$, and the Signorini problem in \cref{prob:signorini} with $d \in \{2,3\}$.} %
\label{tab:mesh-dependence}
\end{table}

\begin{figure}[h!]
\centering
\includegraphics[width =0.32 \textwidth]{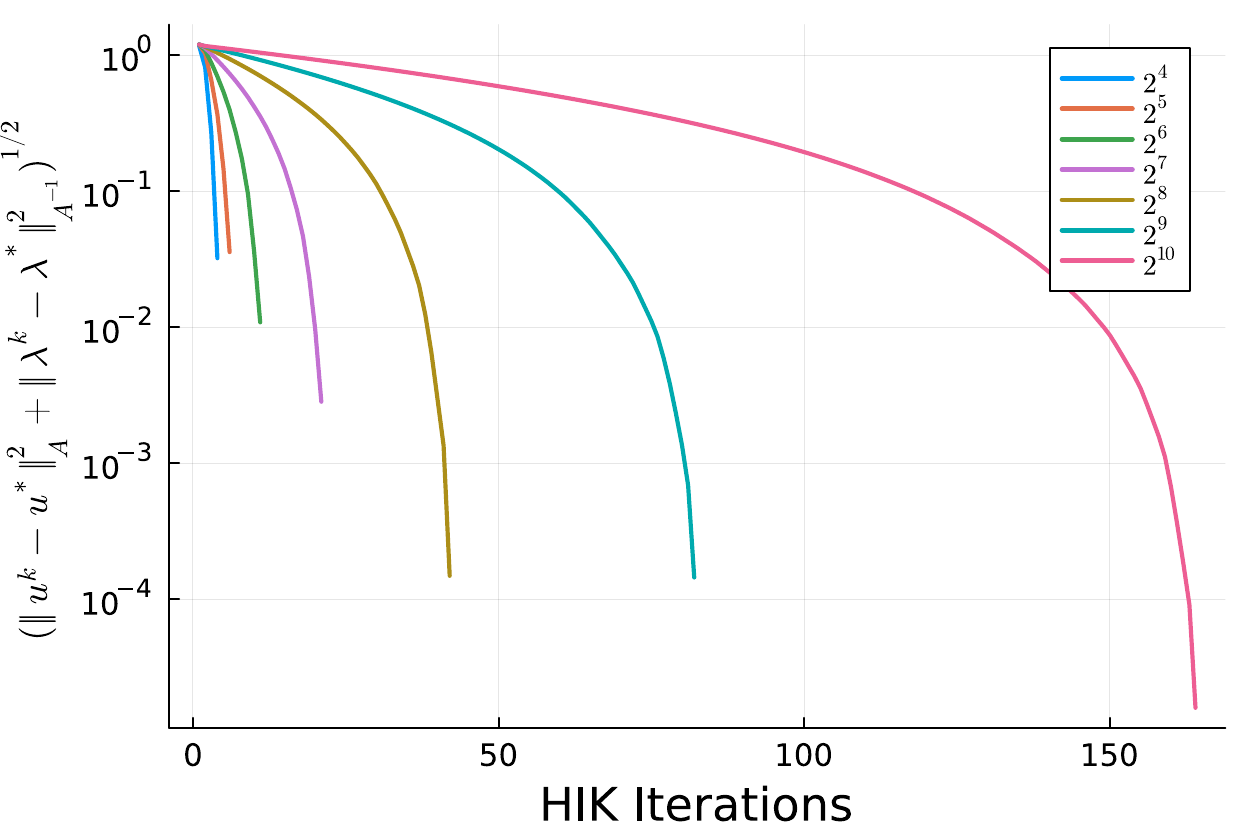}
\includegraphics[width =0.32 \textwidth]{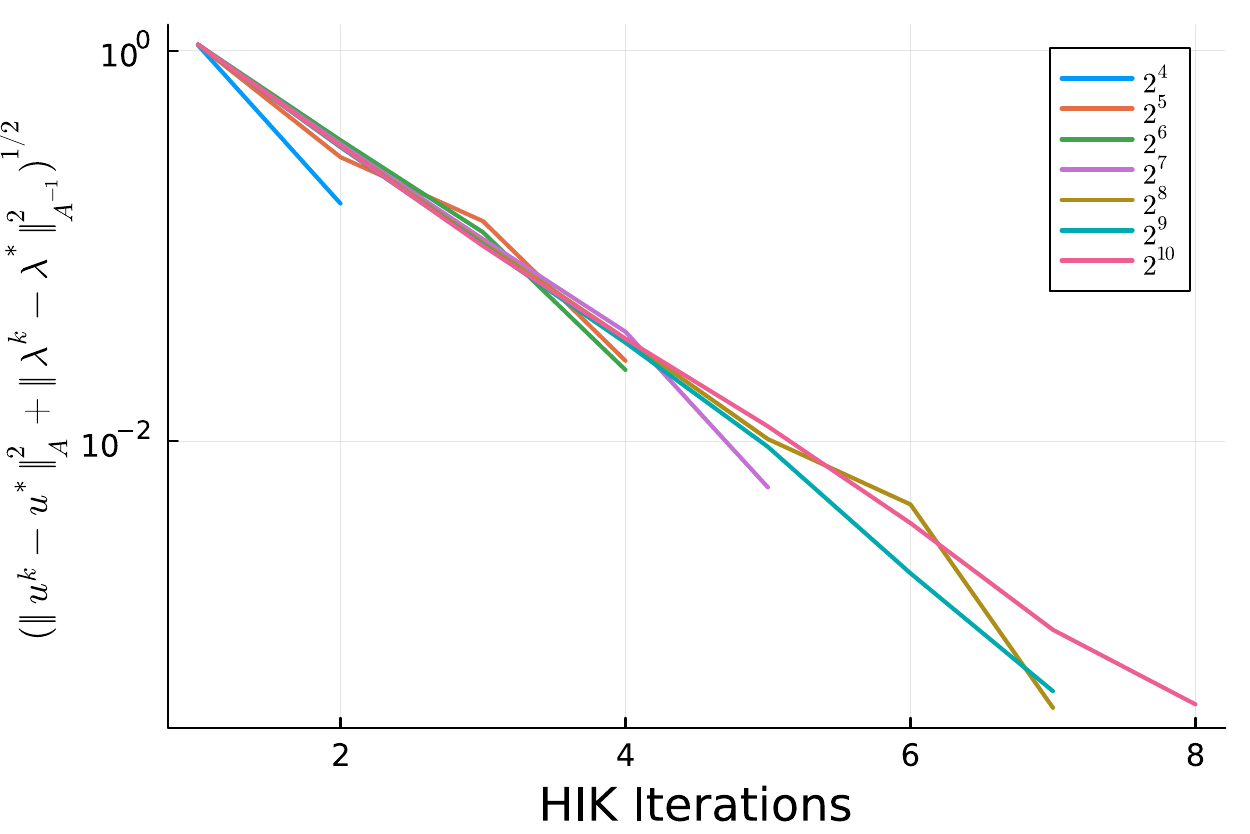}
\includegraphics[width =0.32 \textwidth]{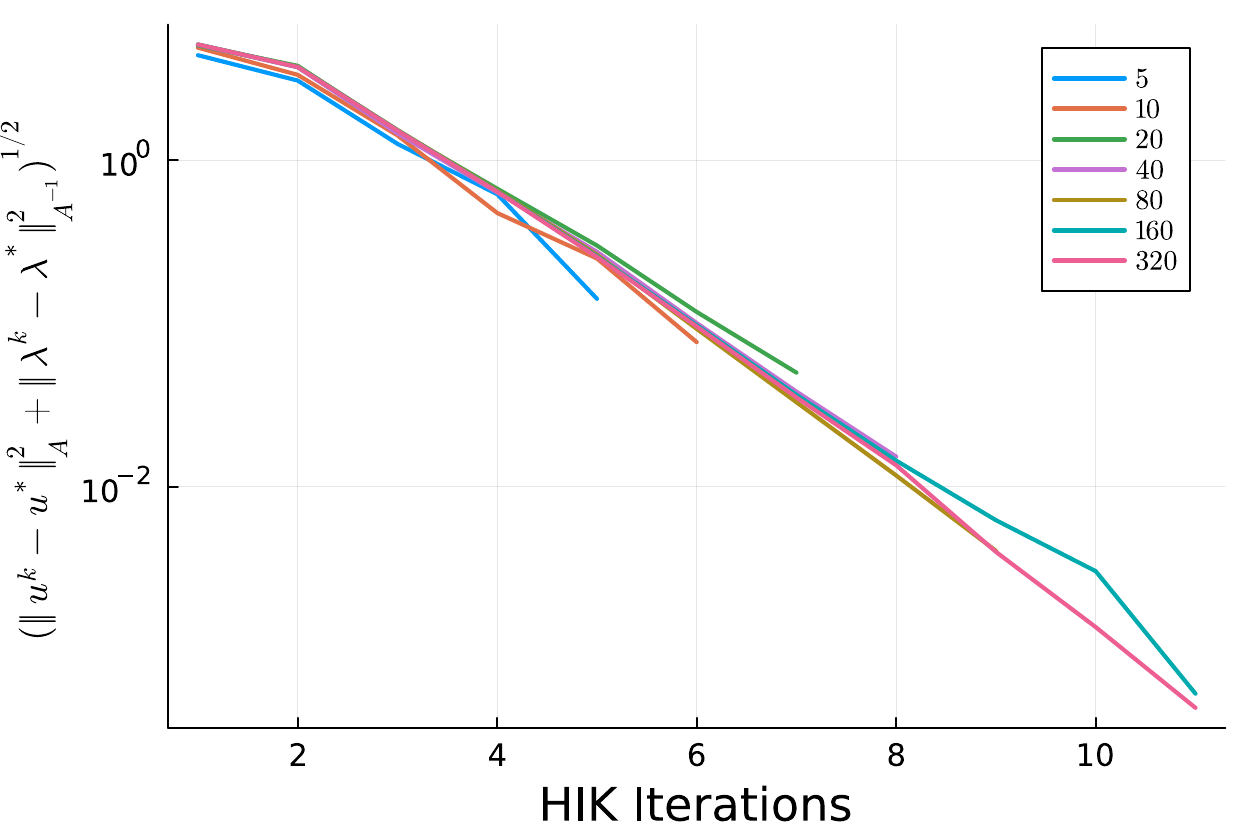}
\caption{Convergence plots of the HIK iterates, with increasing mesh resolution $n$ as labelled in the legend, for \cref{prob:obstacle,prob:thin,prob:signorini} from left to right, respectively, when $d=2$. \cref{prob:obstacle} features superlinear convergence of the iterates. However, the convergence is mesh dependent and for each uniform refinement we observe that the convergence rate halves. For \cref{prob:thin,prob:signorini}, the convergence exhibits an initial regime of mesh-independent linear convergence. Once the correct active set is identified, the subsequent error drops to zero. This correct identification occurs later with each uniform refinement.}
\label{fig:convergence}
\end{figure}

\smallskip
\noindent \textbf{\cref{prob:obstacle} results.}  The iteration counts for obstacle problem solves experience exponential growth. For $n \geq 2^7$, the number of iterations almost doubles with each uniform refinement in both one and two dimensions. The convergence plot in \cref{fig:convergence} reveals a superlinear convergence of the iterates that deteriorates as the mesh resolution $n$ increases. If $(\vectt{u}^*_h, \vectt{\lambda}^*_h)$ denotes the solution on a mesh with mesh size $h$, then we observe that after a uniform refinement (denoted by a subscript of $h/2$)
\begin{align}
\| \vectt{u}^{2k}_{h/2} - \vectt{u}^{*}_{h/2} \|^2_A + \| \vectt{\lambda}^{2k}_{h/2} - \vectt{\lambda}^{*}_{h/2}
 \|^2_{A^{-1}} = \| \vectt{u}^{k}_h - \vectt{u}^{*}_h \|^2_A + \| \vectt{\lambda}^{k}_h - \vectt{\lambda}^{*}_h \|^2_{A^{-1}}.
\end{align}
A remarkable observation can be made in \cref{fig:peeling}: Only the dofs on the boundary of active set are switching to the inactive set from one HIK iteration to the next. Qualitatively, after the first HIK iteration, the solver has ``overshot" and a sizeable number of dofs are glued to the obstacle, i.e., are active concerning the bound constraint. However, the true discretized solution has a smaller active set, and each subsequent HIK iteration removes dofs from the active set. This is visually observed as the HIK iterates \emph{peel} away from the obstacle.

We call this the layer-by-layer peeling effect and it is a useful heuristic for determining the growth in the number of HIK iterations. After a uniform mesh refinement, HIK must peel away twice as many dofs from the active set. This correction requires roughly twice as many iterations when compared to the scheme on the next coarser mesh. The peeling effect, however, gets mitigated by grid-sequencing or full approximation multigrid schemes. Indeed, in those regimes, first the problem is solved on a coarse mesh where fewer dofs are incorrectly assigned to the active set and thus requiring a relatively small number of HIK iterations. This coarse scale solution is then prolongated to the next finer mesh to generate the initial guess for running HIK on the next finest grid. As the initial guess is then usually close to the solution, the HIK solver has fewer dofs to peel away from the (wrong) active set estimate.  We further analyse the peeling effect in \Cref{sec:peeling}.
\begin{figure}[h!]
\centering
\subfloat[1D: 16 cells]{\includegraphics[height =0.22\textwidth]{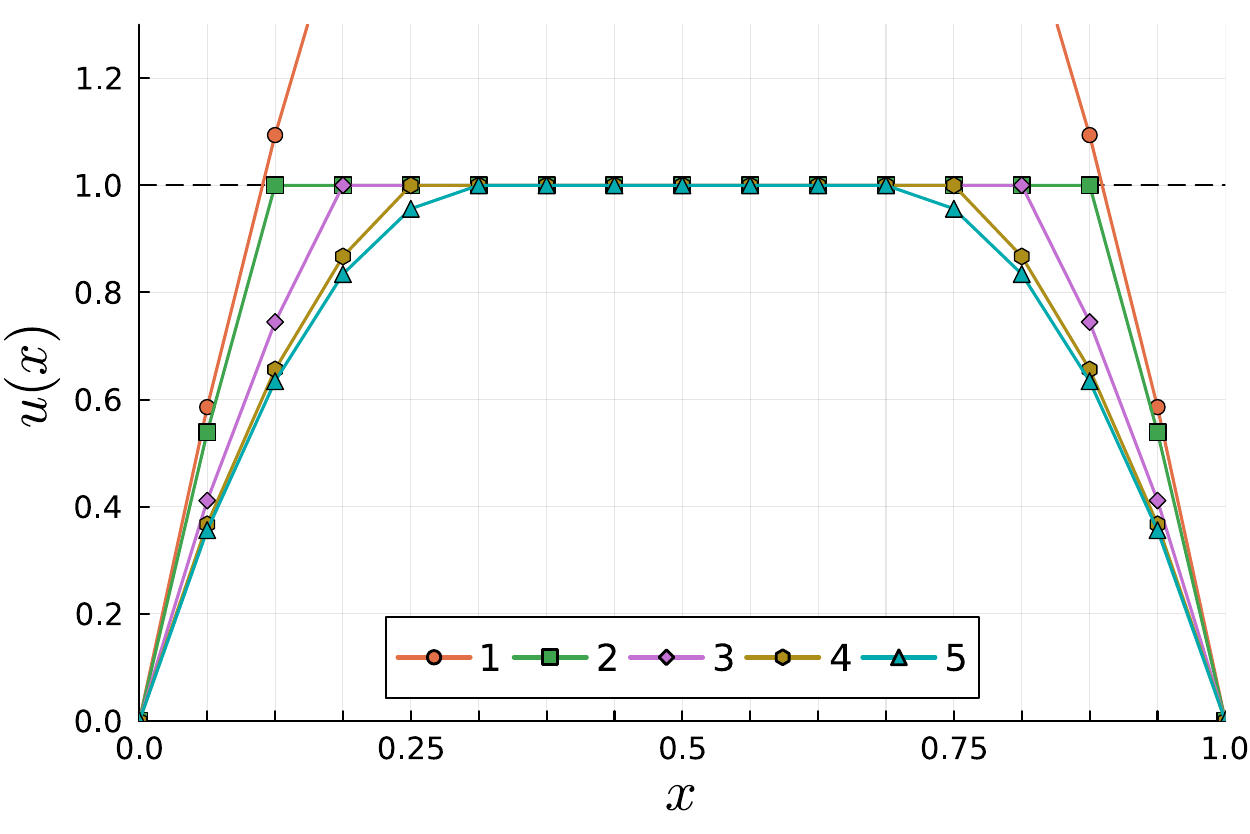}}
\subfloat[1D: 32 cells]{\includegraphics[height =0.22\textwidth]{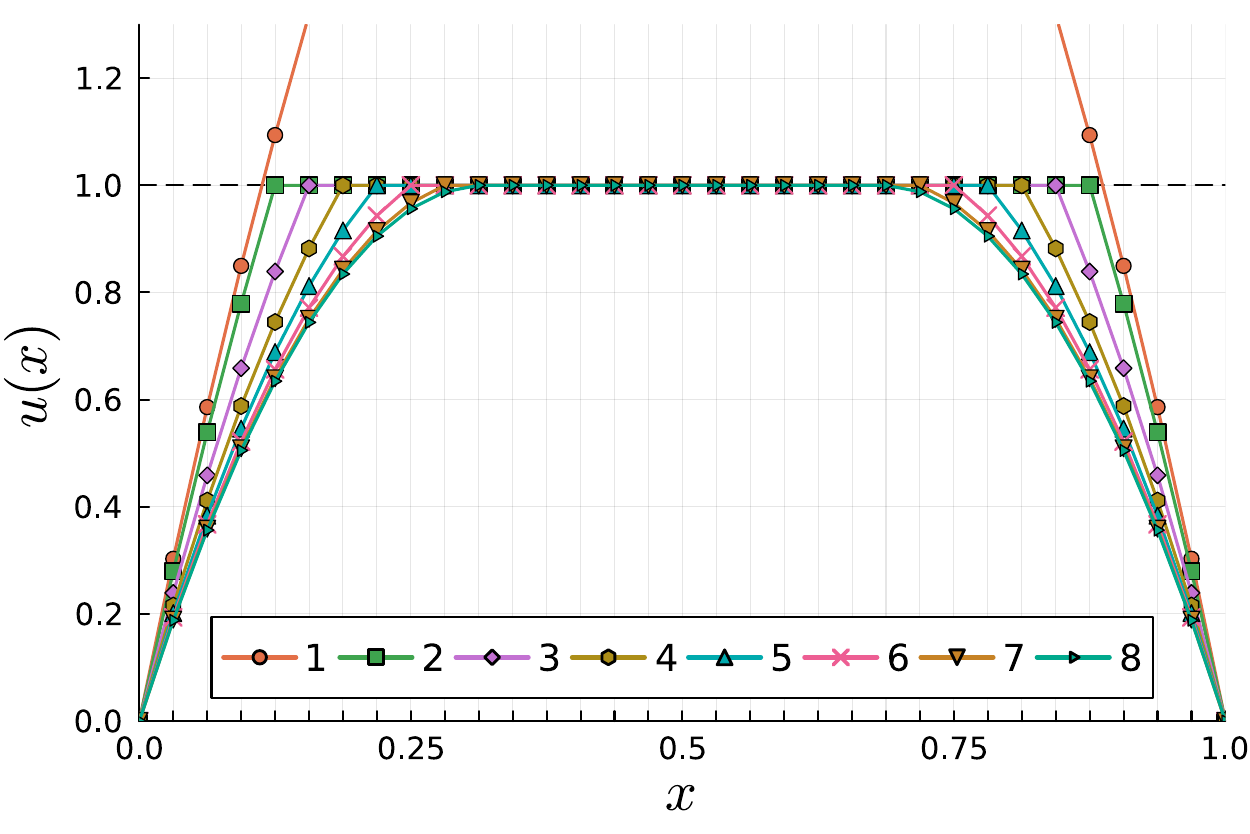}} \;
\subfloat[2D: $16 \times 16$ mesh]{\includegraphics[height =0.22 \textwidth]{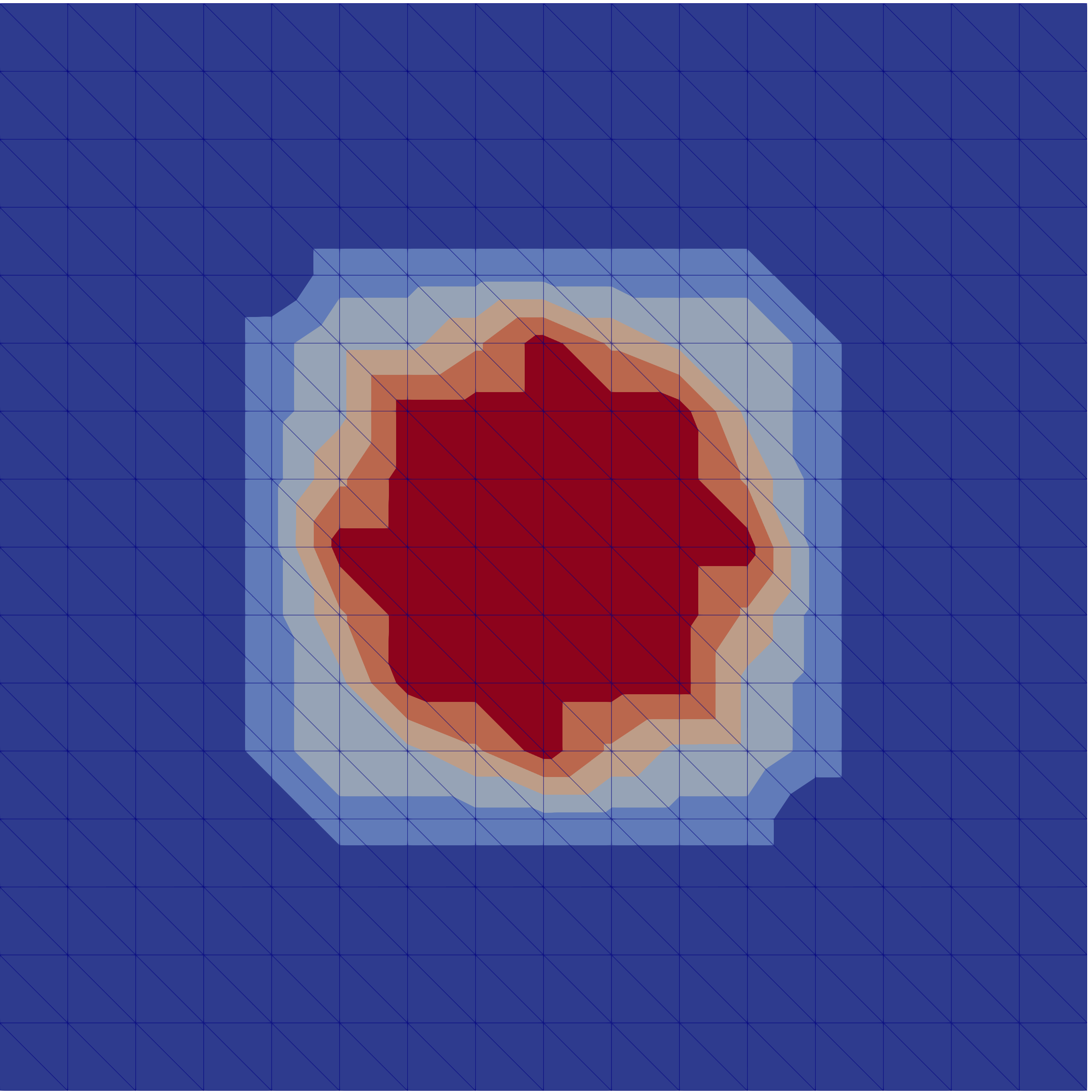}}
\includegraphics[height =0.22\textwidth]{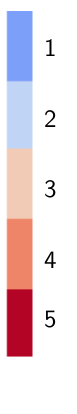}
\caption{Evolution of the HIK iterates, in 1D (left and middle), and the active set, in 2D (right), when the HIK solver is applied to the obstacle problem of \cref{prob:obstacle} on various meshes. The legend numbers are the iteration value $k$. In 2D, the active sets are nested and shrink as the iteration count increases.  We observe that a dof (or equivalently a vertex of a cell in the mesh) only switches from the active to the inactive set if it shares an edge with a dof in the inactive set. Visually, the iterates appear to be peeling away from the obstacle one layer of dofs at a time.}
\label{fig:peeling}
\end{figure}

\smallskip
\noindent \textbf{Results for \cref{prob:thin,prob:signorini}.} The HIK solver produces mesh-dependent iteration counts when applied to the thin obstacle and Signorini problems of \cref{prob:thin,prob:signorini}. However, in stark contrast to the obstacle problem of \cref{prob:obstacle}, peeling does not occur layer-by-layer and dofs can switch significantly faster from the active to the inactive set within one iteration. This is demonstrated in \cref{fig:signorini-iterations} and was also observed in \cite{hintermuller2003} and \cite[Tab.~1--6]{hueber2005}. 

The iteration counts only grew by 1--3 iterations with each refinement. \cref{fig:convergence} reveals a seemingly mesh-independent \emph{linear} convergence of the iterates. A similar behaviour has been proven for projection methods accelerated via domain decomposition \cite{schoberl1998}. In the penultimate HIK iteration, the active set is correctly identified and in the subsequent iteration the error drops to zero as the primal equation is linear. This active set identification occurs later for each mesh refinement which causes the mild growth in the number of iterations.

\begin{figure}[h!]
\centering
\subfloat[It.~2: 69 active dofs]{\includegraphics[width=0.3\textwidth]{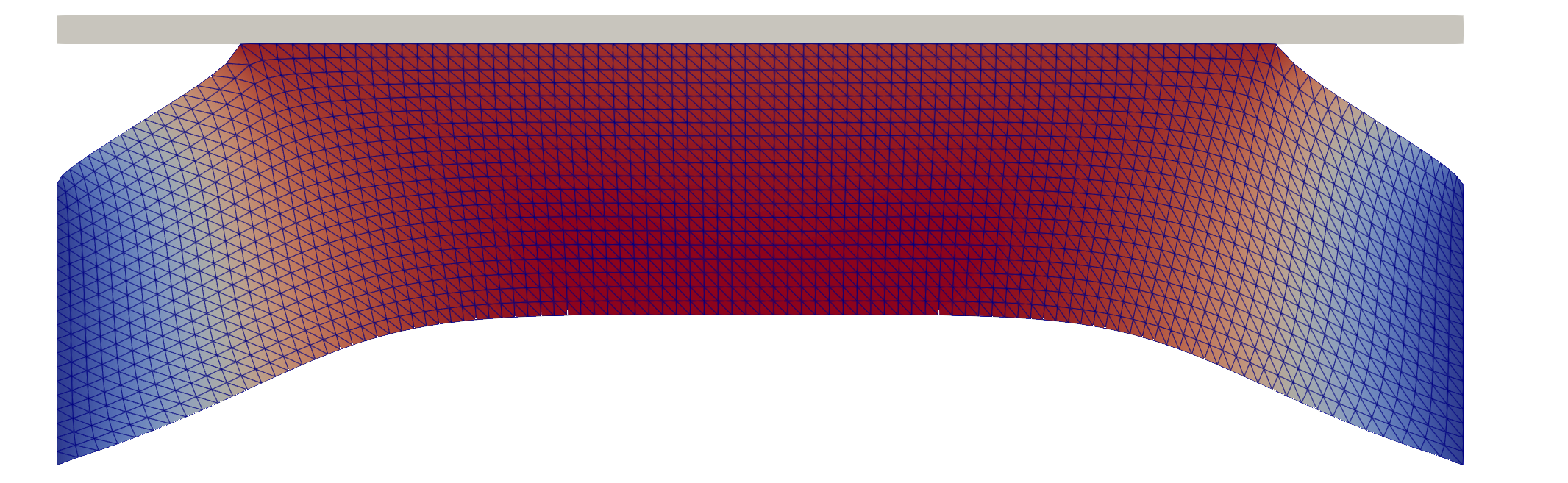}}\;\;
\subfloat[It.~3: 51 active dofs]{\includegraphics[width=0.3\textwidth]{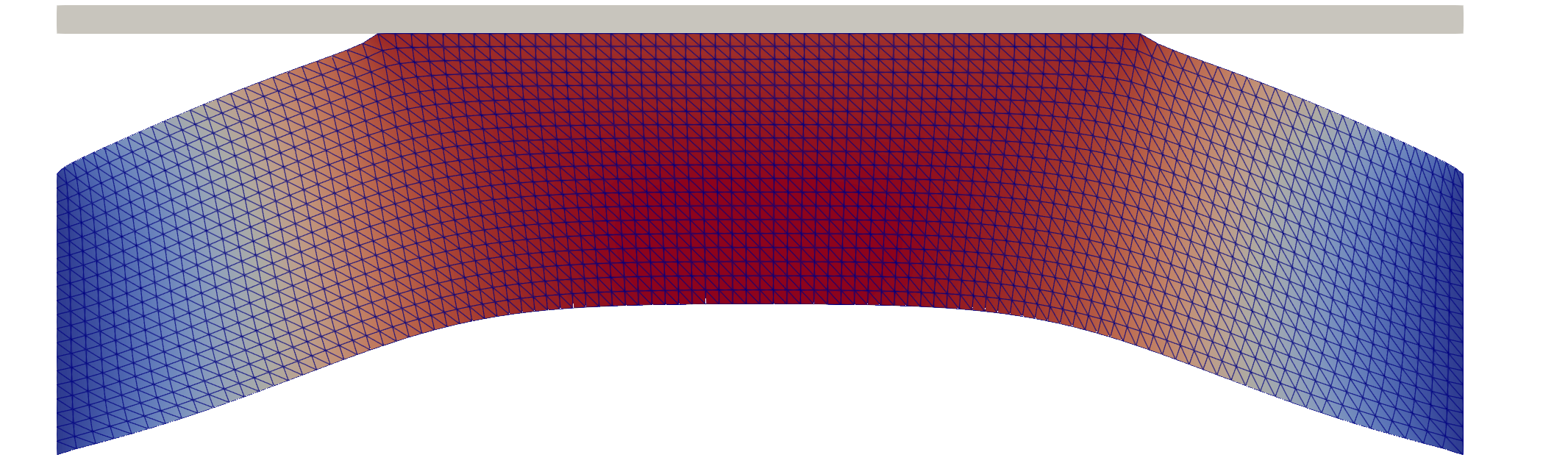}}\;\;
\subfloat[It.~4: 41 active dofs]{\includegraphics[width=0.3\textwidth]{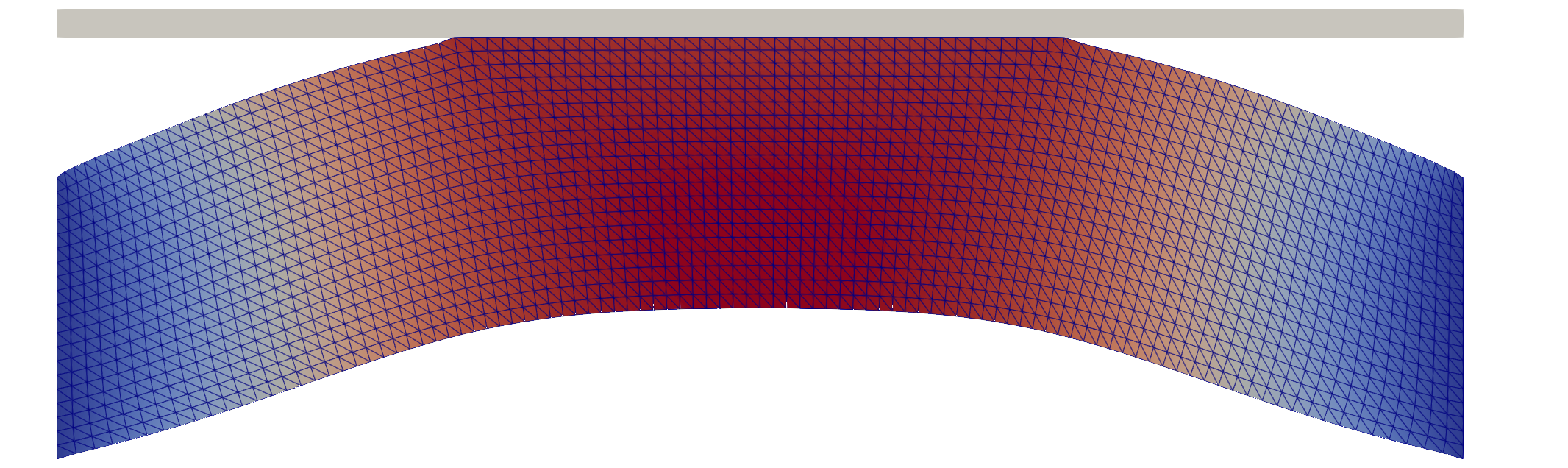}}\\
\subfloat[It.~2: 1092 active dofs]{\includegraphics[width=0.3\textwidth]{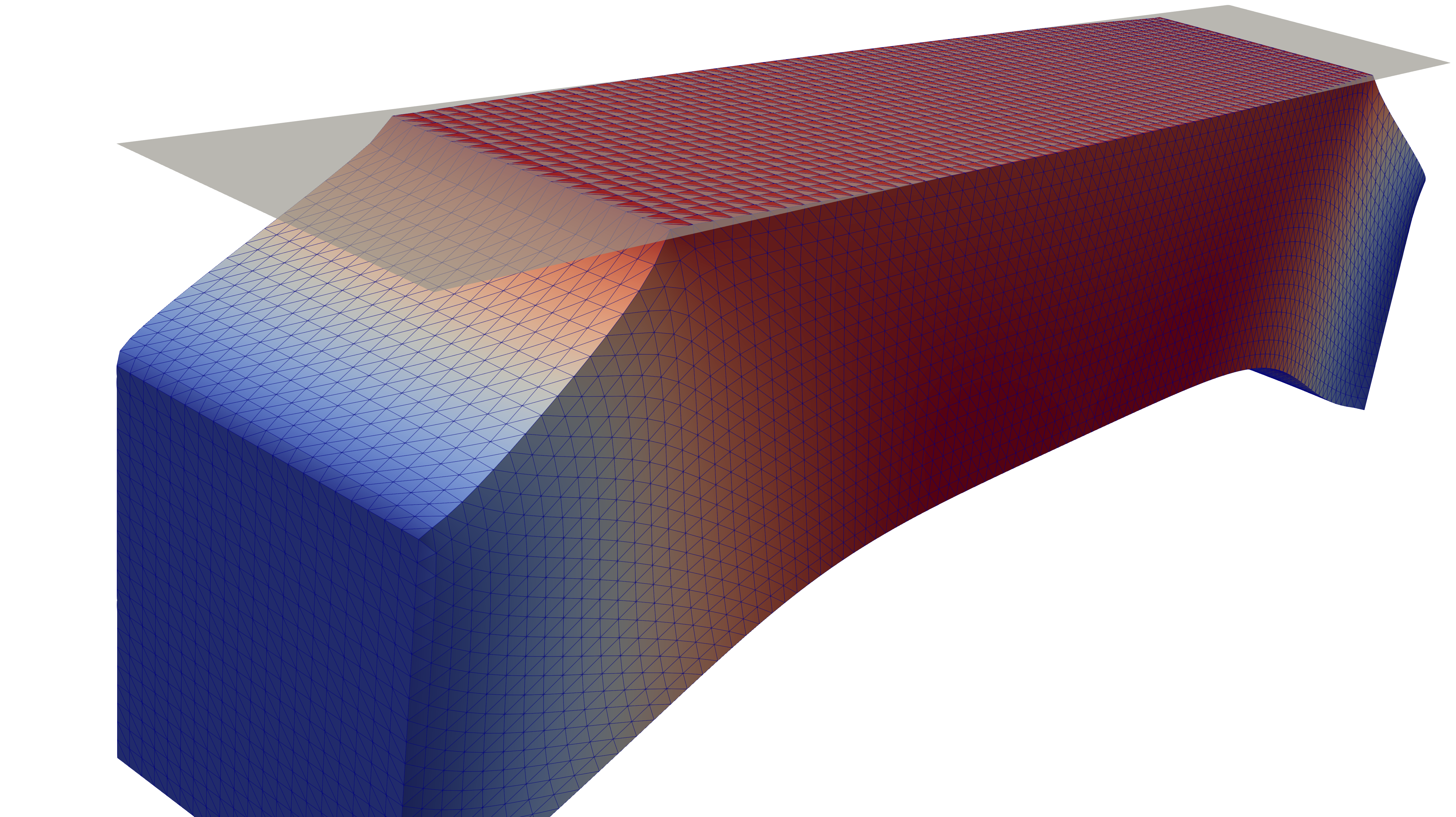}}\;\;
\subfloat[It.~3: 880 active dofs]{\includegraphics[width=0.3\textwidth]{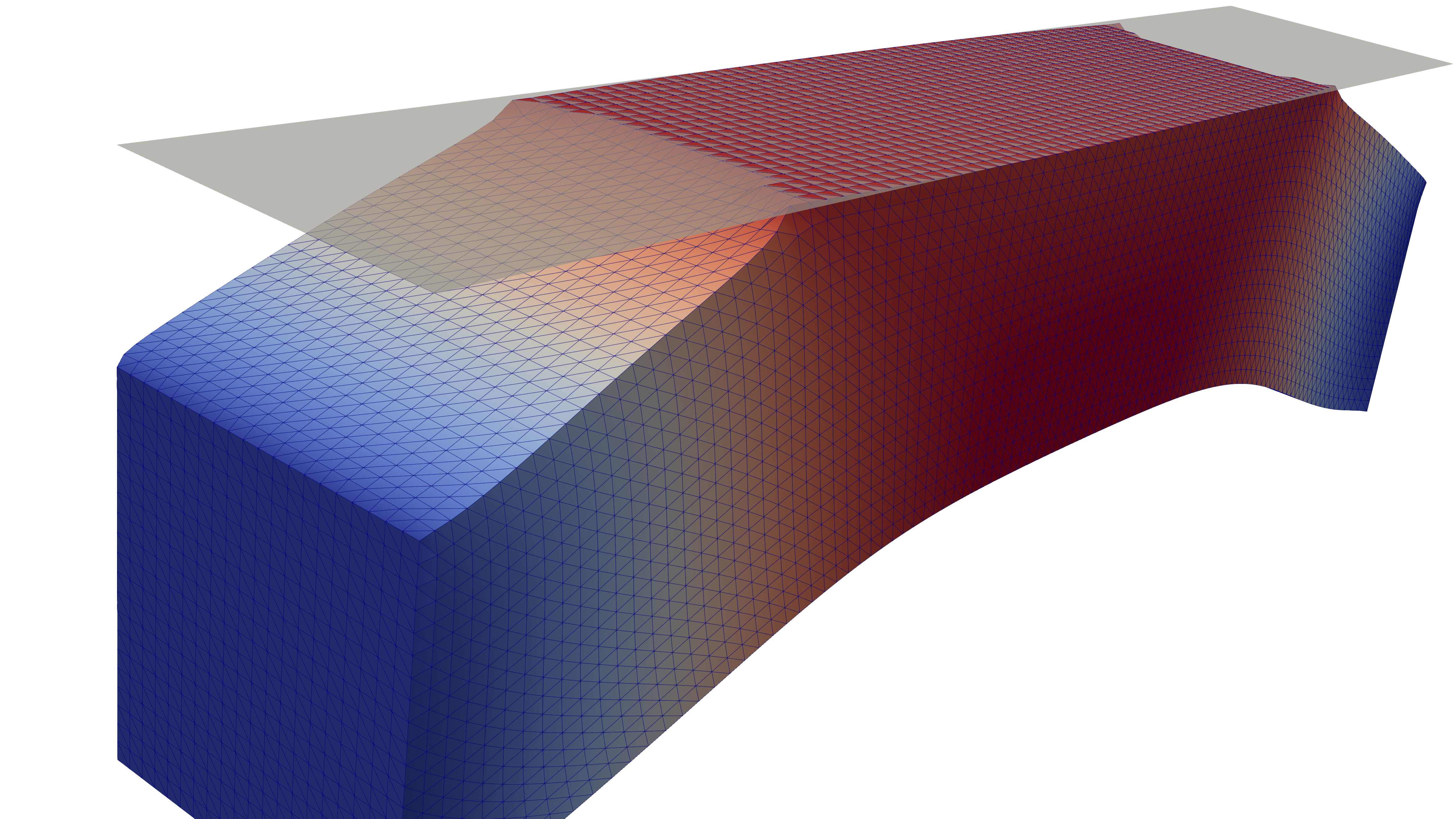}}\;\;
\subfloat[It.~4: 752 active dofs]{\includegraphics[width=0.3\textwidth]{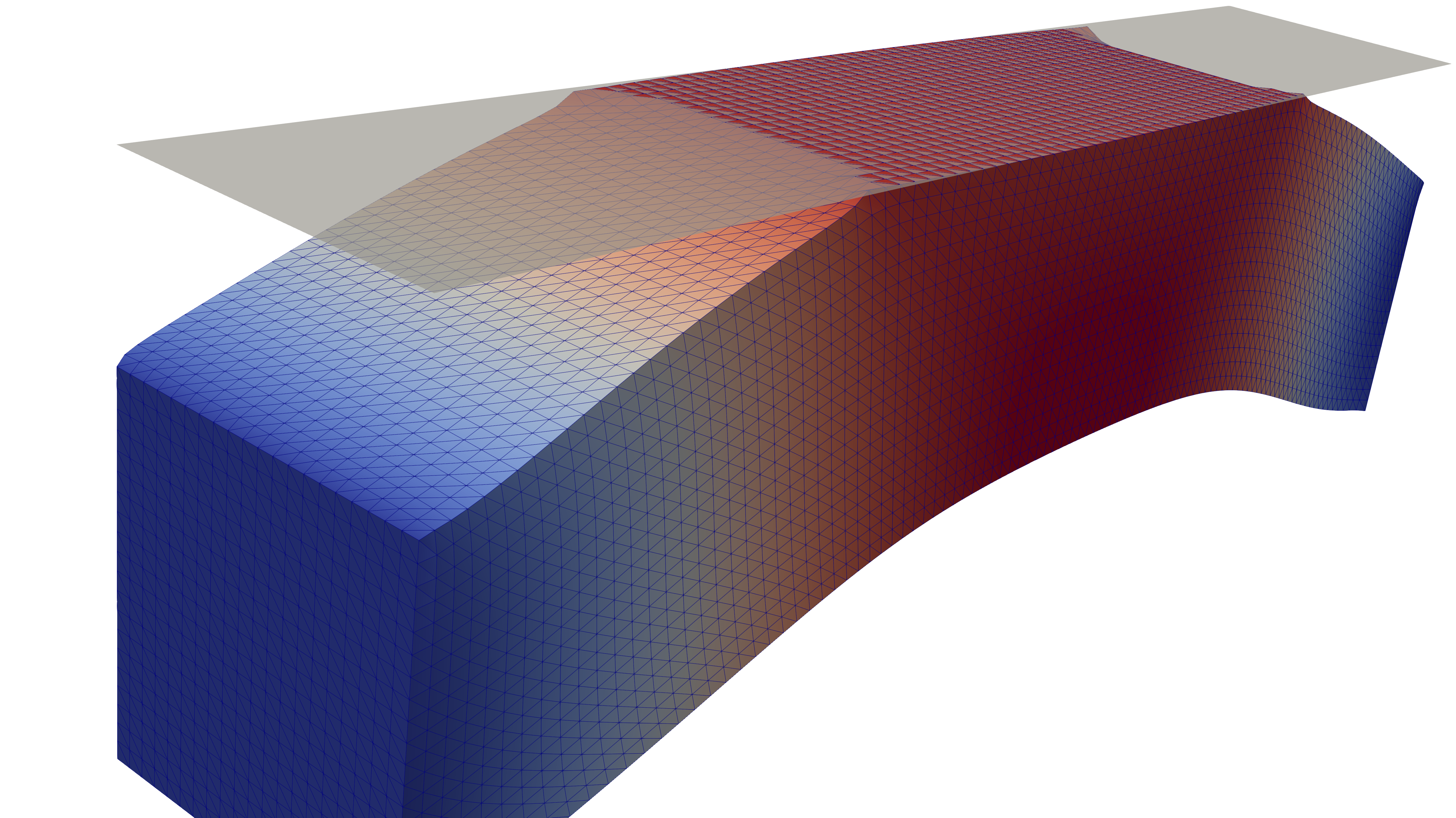}}
\caption{The second to fourth HIK iterates for the two- and three-dimensional Signorini problem of \cref{prob:signorini} on a $100\times20$ mesh (4158 dofs) and $100\times20\times20$ mesh (130,977 dofs), respectively. We report the number of active set dofs at each iteration. We observe that, unlike the obstacle problem, many active set dofs can switch to the inactive set in one step.}
\label{fig:signorini-iterations}
\end{figure}

\section{A global convergence rate}
\label{sec:convergence}
Beside the local superlinear convergence of HIK due to its equivalence to SSN, in \cite{hintermuller2002} global convergence results are provided depending on properties of the matrix $A$ (such as $A$ being an M-matrix) or $A$ satisfying specific sign conditions. These global convergence results, however, do not offer any global rate. The goal of this section is now to derive, under conditions, such a global convergence rate for the HIK iterates in \cref{alg:hik} on a fixed mesh. For the ease of notation, we drop the subscript $_h$ in conjunction with coefficient vectors and (in)active sets.

We begin with a useful, technical lemma. 
\begin{lemma}
\label{lem:linear}
Let $(\vectt{u}^*, \vectt{\lambda}^*)$ denote the solution to \cref{pdas:1}, with $B=I$, $c=1$, and $(\vectt{u}^k, \vectt{\lambda}^k)$ the $k^{\text{th}}$ iterate of \cref{alg:hik}. Suppose that $A$ is symmetric positive-definite and $k \geq 1$.  Then 
\begin{enumerate}[label={\rm(}\roman*{\rm)}]
\item $A\vectt{u}^k + \vectt{\lambda}^k - \vectt{b} = \vectt{0}$, \label{lem:linear:i}
\item $A_{\mathcal{I}^k \mathcal{A}^k} (\vectt{u}^k-\vectt{u}^*)_{\mathcal{A}^k} + A_{\mathcal{I}^k \mathcal{I}^k} (\vectt{u}^k-\vectt{u}^*)_{\mathcal{I}^k} +  (\vectt{\lambda}^k-\vectt{\lambda}^*)_{\mathcal{I}^k} = \vectt{0}$,  \label{lem:linear:ii}
\item $\| \vectt{u}^k - \vectt{u}^*\|_A = \| \vectt{\lambda}^k - \vectt{\lambda}^*\|_{A^{-1}}$, \label{lem:linear:iii}
\end{enumerate}
If we further assume that $\vectt{u}^k \leq \vectt{\varphi}$, then
\begin{enumerate}[label=(\roman*)]
\setcounter{enumi}{3}
\item $\vectt{u}^k_{\mathcal{A}^k} = \vectt{\varphi}_{\mathcal{A}^k}$.  \label{lem:linear:iv}
\end{enumerate}
\end{lemma}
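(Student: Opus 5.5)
Each of the four claims comes directly from the update rules of \cref{alg:hik} and the equations satisfied by $(\vectt{u}^*,\vectt{\lambda}^*)$. We prove them in order, using the earlier ones along the way.

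For \ref{lem:linear:i}, fix $k\geq1$ and look at the step that produced $(\vectt{u}^k,\vectt{\lambda}^k)$ from the sets $\mathcal{A}^{k-1},\mathcal{I}^{k-1}$.
\begin{itemize}
\item On $\mathcal{I}^{k-1}$, adding $(A\vectt{u}^{k-1}-\vectt{b})_{\mathcal{I}^{k-1}}$ to both sides of the reduced system in step 5 gives $(A\vectt{u}^k-\vectt{b})_{\mathcal{I}^{k-1}}=\vectt{0}$. This uses $\vectt{u}^k_{\mathcal{A}^{k-1}}=\vectt{\varphi}_{\mathcal{A}^{k-1}}$, so that $\vectt{\varphi}-\vectt{u}^{k-1}=\vectt{u}^k-\vectt{u}^{k-1}$ on $\mathcal{A}^{k-1}$. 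Since $\vectt{\lambda}^k_{\mathcal{I}^{k-1}}=\vectt{0}$, the residual vanishes on $\mathcal{I}^{k-1}$.
\item On $\mathcal{A}^{k-1}$, step 7 sets $\vectt{\lambda}^k$ equal to exactly $-(A\vectt{u}^k-\vectt{b})$, so the residual vanishes there too.
\end{itemize}
This is the only place the iteration structure enters, and it is why we need $k\ge1$.

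For \ref{lem:linear:ii}, the solution satisfies $A\vectt{u}^*+\vectt{\lambda}^*=\vectt{b}$ by \cref{pdas:1a} with $B=I$. Subtracting this from \ref{lem:linear:i} gives
\begin{align}
A(\vectt{u}^k-\vectt{u}^*)+(\vectt{\lambda}^k-\vectt{\lambda}^*)=\vectt{0}.
\end{align}
Restricting the rows to $\mathcal{I}^k$ and splitting the columns of $A$ into $\mathcal{A}^k\cup\mathcal{I}^k$ yields \ref{lem:linear:ii}. It holds for any partition, in particular the one from step 8.

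For \ref{lem:linear:iii}, the same identity gives $\vectt{\lambda}^k-\vectt{\lambda}^*=-A(\vectt{u}^k-\vectt{u}^*)$. Hence
\begin{align}
\|\vectt{\lambda}^k-\vectt{\lambda}^*\|_{A^{-1}}^2=(\vectt{u}^k-\vectt{u}^*)^\top A A^{-1}A(\vectt{u}^k-\vectt{u}^*)=\|\vectt{u}^k-\vectt{u}^*\|_A^2,
\end{align}
using the symmetry and positive-definiteness of $A$, which make both norms well defined.

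For \ref{lem:linear:iv}, take $i\in\mathcal{A}^k$. By step 8, $\vectt{\lambda}^k_i+\vectt{u}^k_i-\vectt{\varphi}_i>0$. Two cases arise.
\begin{itemize}
\item If $i\in\mathcal{I}^{k-1}$, then $\vectt{\lambda}^k_i=0$, so $\vectt{u}^k_i>\vectt{\varphi}_i$. This contradicts the hypothesis $\vectt{u}^k\le\vectt{\varphi}$, so this case cannot occur.
\item If $i\in\mathcal{A}^{k-1}$, then $\vectt{u}^k_i=\vectt{\varphi}_i$ directly from step 6.
\end{itemize}
In either case $\vectt{u}^k_i=\vectt{\varphi}_i$. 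The main point of care is this case split, which relies on the remark after \cref{alg:hik} that for $k\ge1$ only one of the two terms in step 8 can be nonzero. The remaining steps are routine.
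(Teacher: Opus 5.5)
Your proposal is correct and follows essentially the same route as the paper. You subtract $A\vectt{u}^*+\vectt{\lambda}^*=\vectt{b}$ to obtain $A(\vectt{u}^k-\vectt{u}^*)+(\vectt{\lambda}^k-\vectt{\lambda}^*)=\vectt{0}$, restrict to the rows in $\mathcal{I}^k$ for (ii), expand the $A$- and $A^{-1}$-norms for (iii), and split into $\mathcal{I}^{k-1}$ versus $\mathcal{A}^{k-1}$ for (iv), which the paper phrases as a contrapositive. The only difference is that you verify (i) explicitly from steps 5--7 of the algorithm, whereas the paper simply appeals to the discussion in the section on primal-dual active-set strategies.
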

\begin{proof}
\cref{lem:linear}\labelcref{lem:linear:i} follows from the discussion in \Cref{sec:pdas}. Since $A\vectt{u}^* + \vectt{\lambda}^* - \vectt{b} = \vectt{0}$, from \cref{lem:linear}\labelcref{lem:linear:i}, we have that
\begin{align}
A(\vectt{u}^k - \vectt{u}^*) + (\vectt{\lambda}^k - \vectt{\lambda}^*) = \vectt{0}.
\label{eq:linear:1}
\end{align}
Thus \cref{lem:linear}\labelcref{lem:linear:ii} follows by considering the rows corresponding to the inactive set. Similarly, from \cref{eq:linear:1}, \cref{lem:linear}\labelcref{lem:linear:iii} is deduced via
 \begin{align*}
& \| \vectt{u}^k - \vectt{u}^* \|^2_A = (\vectt{u}^k - \vectt{u}^*)^\top A (\vectt{u}^k - \vectt{u}^*) \\
&\quad = - (\vectt{u}^k - \vectt{u}^*)^\top  (\vectt{\lambda}^k - \vectt{\lambda}^*) = (\vectt{\lambda}^k - \vectt{\lambda}^*)^\top A^{-1}  (\vectt{\lambda}^k - \vectt{\lambda}^*) =  \| \vectt{\lambda}^k - \vectt{\lambda}^*\|^2_{A^{-1}}.
\end{align*}
By assumption $\vectt{u}^k \leq \vectt{\varphi}$. Hence, from \cref{eq:static}, if $\vectt{u}^k_i < \vectt{\varphi}$, then $i \in \mathcal{I}^{k-1}$ and thus $\vectt{\lambda}^k_i = 0$. But then by definition of $\mathcal{A}^k$, $i \not\in \mathcal{A}^k$. Hence we must have $\vectt{u}^k_{\mathcal{A}^k} = \vectt{\varphi}_{\mathcal{A}^k}$ and \cref{lem:linear}\labelcref{lem:linear:iv} follows.
\end{proof}

\begin{remark}[Primal iterate feasibility]\label{rem_hik}
In \cite[Th.~3.2]{hintermuller2002} it was shown that $\vectt{u}^k \leq \vectt{\varphi}$ for $k \geq 2$ and $\vectt{u}^*\leq\vectt{u}^{k+1}\leq\vectt{u}^k$ for $k\geq 1$(see also \cite[Th.~1]{hoppe1987}) when $A$ is a (non-singular) $M$-matrix. It also holds for small perturbations of an $M$-matrix whenever $k \geq k_0$ for some $k_0 \in \mathbb{N}$ \cite[Th.~3.3]{hintermuller2003}.
\end{remark}

In the next theorem, we deduce the exact rate of convergence of \cref{alg:hik}.
\begin{theorem}
\label{th:convergence-rate}
Let $(\vectt{u}^*, \vectt{\lambda}^*)$ denote the solution to \cref{pdas:1}, with $B=I$, $c=1$, and $(\vectt{u}^k, \vectt{\lambda}^k)$ the $k^{\text{th}}$ iterate of \cref{alg:hik}. Suppose that $A$ is a symmetric positive-definite matrix. Consider an HIK iterate $k \geq 1$ such that $\mathcal{A}^* \subseteq \mathcal{A}^k$ and, consequently, $\mathcal{I}^k \subseteq \mathcal{I}^*$. Moreover, suppose that $\vectt{u}^k \leq \vectt{\varphi}$. Then
\begin{align}
\label{eq:convergence-recurrence}
\| \vectt{u}^{k+1} - \vectt{u}^* \|^2_A - \| \vectt{u}^{k} - \vectt{u}^* \|^2_A + \| \vectt{\lambda}^k_{\mathcal{I}^k} \|^2_{(A_{\mathcal{I}^k,\mathcal{I}^k})^{-1}} = 0
\end{align}
and, if $\vectt{u}^k \neq \vectt{u}^*$, then
\begin{align}
\frac{\| \vectt{u}^{k+1} - \vectt{u}^* \|_A}{{\| \vectt{u}^{k} - \vectt{u}^* \|_A}}
=
\frac{\| \vectt{\lambda}^{k+1} - \vectt{\lambda}^* \|_{A^{-1}}}{{\| \vectt{\lambda}^{k} - \vectt{\lambda}^* \|_{A^{-1}}}} 
= \rho(\vectt{\lambda}^k)
\coloneqq \left(1-\frac{\| \vectt{\lambda}^k_{\mathcal{I}^k} \|^2_{(A_{\mathcal{I}^k,\mathcal{I}^k})^{-1}}}{\| \vectt{\lambda}^{k} - \vectt{\lambda}^* \|_{A^{-1}}^2}\right)^{1/2}.
\label{eq:contraction}
\end{align} 
\end{theorem}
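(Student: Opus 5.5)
The plan is to track the error $\vectt{e}^k \coloneqq \vectt{u}^k - \vectt{u}^*$ and to show that one HIK step changes it only on $\mathcal{I}^k$, by a vector that we can write down explicitly. First we collect the consequences of the hypotheses. Since $\vectt{u}^k \leq \vectt{\varphi}$, \cref{lem:linear}\labelcref{lem:linear:iv} gives $\vectt{u}^k_{\mathcal{A}^k} = \vectt{\varphi}_{\mathcal{A}^k}$. Since $\mathcal{I}^k \subseteq \mathcal{I}^*$ and $\vectt{\lambda}^*$ vanishes on the true inactive set, we have $(\vectt{\lambda}^k - \vectt{\lambda}^*)_{\mathcal{I}^k} = \vectt{\lambda}^k_{\mathcal{I}^k}$. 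By \cref{eq:linear:1} we also have $A\vectt{e}^k = -(\vectt{\lambda}^k - \vectt{\lambda}^*)$.

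Next we compute the update. In the reduced system \cref{eq:reducedHIK}, the first term on the right-hand side vanishes because $(\vectt{\varphi} - \vectt{u}^k)_{\mathcal{A}^k} = \vectt{0}$. By \cref{lem:linear}\labelcref{lem:linear:i}, the second term is $-(A\vectt{u}^k - \vectt{b})_{\mathcal{I}^k} = \vectt{\lambda}^k_{\mathcal{I}^k}$. Hence
\begin{align*}
\vectt{\delta}_{\mathcal{I}^k} = (A_{\mathcal{I}^k,\mathcal{I}^k})^{-1} \vectt{\lambda}^k_{\mathcal{I}^k}.
\end{align*}
Moreover $\vectt{u}^{k+1}_{\mathcal{A}^k} = \vectt{\varphi}_{\mathcal{A}^k} = \vectt{u}^k_{\mathcal{A}^k}$, so $\vectt{e}^{k+1} = \vectt{e}^k + \vectt{\delta}$, where $\vectt{\delta}$ is the extension of $\vectt{\delta}_{\mathcal{I}^k}$ by zero to $\{1 \colto N\}$.

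We then expand the energy:
\begin{align*}
\|\vectt{e}^{k+1}\|_A^2 = \|\vectt{e}^k\|_A^2 + 2\vectt{\delta}^\top A\vectt{e}^k + \vectt{\delta}^\top A\vectt{\delta}.
\end{align*}
Because $\vectt{\delta}$ is supported on $\mathcal{I}^k$, the cross term is $-2\vectt{\delta}_{\mathcal{I}^k}^\top(\vectt{\lambda}^k - \vectt{\lambda}^*)_{\mathcal{I}^k} = -2\,\vectt{\lambda}^{k\top}_{\mathcal{I}^k}(A_{\mathcal{I}^k,\mathcal{I}^k})^{-1}\vectt{\lambda}^k_{\mathcal{I}^k}$. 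For the same reason, the quadratic term is $\vectt{\delta}_{\mathcal{I}^k}^\top A_{\mathcal{I}^k,\mathcal{I}^k}\vectt{\delta}_{\mathcal{I}^k} = \|\vectt{\lambda}^k_{\mathcal{I}^k}\|^2_{(A_{\mathcal{I}^k,\mathcal{I}^k})^{-1}}$. Adding the two gives exactly \cref{eq:convergence-recurrence}. The matrix $A_{\mathcal{I}^k,\mathcal{I}^k}$ is invertible and symmetric positive-definite as a principal submatrix of $A$, so the weighted norm is well defined.

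Finally, for \cref{eq:contraction} we divide \cref{eq:convergence-recurrence} by $\|\vectt{e}^k\|_A^2 \neq 0$ and take square roots; this gives the ratio of the primal errors in terms of $\|\vectt{e}^k\|_A^2$. Applying \cref{lem:linear}\labelcref{lem:linear:iii} at the indices $k$ and $k+1$ (both are $\geq 1$) turns both the primal ratio and the denominator into the corresponding multiplier quantities in the $A^{-1}$-norm, which is precisely \cref{eq:contraction}. The only delicate point is the middle step: we need the identification $(\vectt{\lambda}^k - \vectt{\lambda}^*)_{\mathcal{I}^k} = \vectt{\lambda}^k_{\mathcal{I}^k}$, which uses $\mathcal{A}^* \subseteq \mathcal{A}^k$, and we need the increment to vanish on $\mathcal{A}^k$, which uses feasibility through \cref{lem:linear}\labelcref{lem:linear:iv}. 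The rest is routine bookkeeping.
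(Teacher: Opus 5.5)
Your proof is correct. It reaches \cref{eq:convergence-recurrence} by a different and more economical route than the paper.

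The paper works with the full Newton system \cref{eq:HIKstep}. Using a Schur complement factorization, it writes the error propagation as $\vectt{u}^{k+1}-\vectt{u}^* = (E^{\mathcal{I}^k}A + E^{\mathcal{A}^k})^{-1}E^{\mathcal{A}^k}(\vectt{u}^k-\vectt{u}^*)$. It then shows $\|\vectt{u}^{k+1}-\vectt{u}^*\|_A^2 = \|(\vectt{u}^k-\vectt{u}^*)_{\mathcal{A}^k}\|^2_{Q^k}$, where $Q^k$ is the Schur complement of $A_{\mathcal{I}^k,\mathcal{I}^k}$ in $A$. Only then does it subtract $\|\vectt{u}^k-\vectt{u}^*\|_A^2$ and complete the square via \cref{lem:linear}\labelcref{lem:linear:ii}.

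You instead read the increment straight off the condensed system \cref{eq:reducedHIK}: $\vectt{\delta}_{\mathcal{I}^k} = (A_{\mathcal{I}^k,\mathcal{I}^k})^{-1}\vectt{\lambda}^k_{\mathcal{I}^k}$ and $\vectt{\delta}_{\mathcal{A}^k}=\vectt{0}$. You then expand the energy once.

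\textbf{What your route buys.} It avoids the $2N\times 2N$ block inverse and the matrix $Q^k$ altogether. It also exposes the structure behind the identity. Since $(A(\vectt{u}^{k+1}-\vectt{u}^*))_{\mathcal{I}^k}=\vectt{0}$, the step is the $A$-orthogonal correction of the error on the subspace of vectors vanishing on $\mathcal{A}^k$. So \cref{eq:convergence-recurrence} is simply Pythagoras, which fits naturally with the dual-norm reading in \cref{lem:dual-norm}.

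\textbf{What the paper's route buys.} It produces the intermediate identity $\|\vectt{u}^{k+1}-\vectt{u}^*\|_A = \|(\vectt{u}^k-\vectt{u}^*)_{\mathcal{A}^k}\|_{Q^k}$. This says the new error is the $A$-energy-minimal extension of the old error's active-set part. It also keeps the argument tied to the semismooth Newton linearization.

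Both proofs use the hypotheses in the same way. \cref{lem:linear}\labelcref{lem:linear:iv} removes the active-set contribution, and $\mathcal{I}^k\subseteq\mathcal{I}^*$ gives $\vectt{\lambda}^*_{\mathcal{I}^k}=\vectt{0}$.
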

\begin{proof}
From \cref{eq:HIKstep} we have that
\begin{align*}
&\begin{pmatrix}
\vectt{u}^{k+1} - \vectt{u}^*\\
\vectt{\lambda}^{k+1} - \vectt{\lambda}^*
\end{pmatrix}
= 
\begin{pmatrix}
\vectt{u}^{k} - \vectt{u}^*\\
\vectt{\lambda}^{k} - \vectt{\lambda}^*
\end{pmatrix}
-
\begin{pmatrix}
A & I\\
-E^{\mathcal{A}^k} & E^{\mathcal{I}^k}
\end{pmatrix}^{-1}
\begin{pmatrix}
A\vectt{u}^k + \vectt{\lambda}^k - \vectt{b}\\
\vectt{g}^k
\end{pmatrix}\\
&\quad =\begin{pmatrix}
A & I\\
-E^{\mathcal{A}^k} & E^{\mathcal{I}^k}
\end{pmatrix}^{-1}
\left(
\begin{pmatrix}
A & I\\
-E^{\mathcal{A}^k} & E^{\mathcal{I}^k}
\end{pmatrix}
\begin{pmatrix}
\vectt{u}^{k} - \vectt{u}^*\\
\vectt{\lambda}^{k} - \vectt{\lambda}^*
\end{pmatrix}
- 
\begin{pmatrix}
A\vectt{u}^k + \vectt{\lambda}^k - \vectt{b}\\
\vectt{g}^k
\end{pmatrix}
\right)\\
& \quad =
\begin{pmatrix}
A & I\\
-E^{\mathcal{A}^k} & E^{\mathcal{I}^k}
\end{pmatrix}^{-1}
\begin{pmatrix}
\vectt{0}\\
(\vectt{u}^* - \vectt{\varphi})_{\mathcal{A}^k}  -  \vectt{\lambda}^*_{\mathcal{I}^k} 
\end{pmatrix}
=
\begin{pmatrix}
A & I\\
-E^{\mathcal{A}^k} & E^{\mathcal{I}^k}
\end{pmatrix}^{-1}
\begin{pmatrix}
\vectt{0}\\
- E^{\mathcal{A}^k}(\vectt{u}^k - \vectt{u}^*)_{\mathcal{A}^k} 
\end{pmatrix},
\end{align*}
where we have leveraged \cref{lem:linear}\labelcref{lem:linear:i} and use $A\vectt{u}^* + \vectt{\lambda}^* = \vectt{b}$ for the penultimate equality. The final equality follows from \cref{lem:linear}\labelcref{lem:linear:iv} and since $\mathcal{I}^k \subseteq \mathcal{I}^*$, then $\vectt{\lambda}^*_{\mathcal{I}^k}  = \vectt{0}$.

By defining the Schur complement $S^k = E^{\mathcal{I}^k} + E^{\mathcal{A}^k} A^{-1}$, a Schur complement factorization reveals that
\begin{align*}
\begin{pmatrix}
\vectt{u}^{k+1} - \vectt{u}^*\\
\vectt{\lambda}^{k+1} - \vectt{\lambda}^*
\end{pmatrix}
&= 
\begin{pmatrix}
A^{-1} (S^k)^{-1}E^{\mathcal{A}^k}(\vectt{u}^{k} - \vectt{u}^*)\\
-(S^k)^{-1}E^{\mathcal{A}^k}(\vectt{u}^{k} - \vectt{u}^*)
\end{pmatrix}.
\end{align*}
By examining the first row, we observe that
\begin{align*}
\vectt{u}^{k+1} - \vectt{u}^* =  ( E^{\mathcal{I}^k}  A + E^{\mathcal{A}^k})^{-1} E^{\mathcal{A}^k} ( \vectt{u}^{k} - \vectt{u}^*).
\end{align*}
Now splitting (and re-arranging) the inactive and active indices, we note that 
\begin{align}
\begin{split}
&( E^{\mathcal{I}^k}  A + E^{\mathcal{A}^k})^{-1} E^{\mathcal{A}^k} = 
\begin{pmatrix}
A_{\mathcal{I}^k \mathcal{I}^k} & A_{\mathcal{I}^k \mathcal{A}^k}\\
\vectt{0} & I_{\mathcal{A}^k \mathcal{A}^k}
\end{pmatrix}^{-1}
\begin{pmatrix}
\vectt{0}  &\vectt{0}  \\
 \vectt{0} & I_{\mathcal{A}^k \mathcal{A}^k}
\end{pmatrix}\\
&\qquad=
\begin{pmatrix}
A_{\mathcal{I}^k \mathcal{I}^k}^{-1} & -A_{\mathcal{I}^k \mathcal{I}^k}^{-1} A_{\mathcal{I}^k \mathcal{A}^k}\\
\vectt{0} & I_{\mathcal{A}^k \mathcal{A}^k}
\end{pmatrix}
\begin{pmatrix}
\vectt{0}  &\vectt{0}  \\
 \vectt{0} & I_{\mathcal{A}^k \mathcal{A}^k}
\end{pmatrix}
=
\begin{pmatrix}
\vectt{0} & -A_{\mathcal{I}^k \mathcal{I}^k}^{-1} A_{\mathcal{I}^k \mathcal{A}^k}\\
\vectt{0} & I_{\mathcal{A}^k \mathcal{A}^k}
\end{pmatrix}.
\end{split}
\label{eq:conv:1}
\end{align}
Hence, we get
\begin{align}
(( E^{\mathcal{I}^k}  A + E^{\mathcal{A}^k})^{-1} E^{\mathcal{A}^k} )^\top A (( E^{\mathcal{I}^k}  A + E^{\mathcal{A}^k})^{-1} E^{\mathcal{A}^k} )
= 
\begin{pmatrix}
\vectt{0} & \vectt{0}\\
\vectt{0} & Q^k
\end{pmatrix},
\end{align}
where $Q^k$ is the Schur complement $Q^k \coloneqq A_{\mathcal{A}^k \mathcal{A}^k} - A_{\mathcal{A}^k \mathcal{I}^k} (A_{\mathcal{I}^k \mathcal{I}^k} )^{-1} A_{\mathcal{I}^k \mathcal{A}^k}$. Thus
\begin{align}
\|\vectt{u}^{k+1} - \vectt{u}^*\|^2_A = \|(\vectt{u}^{k} - \vectt{u}^*)_{\mathcal{A}^k}\|^2_{Q^k}.
\end{align}
Setting $\vectt{p}^k \coloneqq \vectt{u}^{k} - \vectt{u}^*$ and leveraging \cref{lem:linear}\labelcref{lem:linear:ii}, we observe that
\begin{align*}
&\|\vectt{u}^{k+1} - \vectt{u}^*\|^2_A - \|\vectt{u}^{k} - \vectt{u}^*\|^2_A \\
&\quad = (\vectt{p}^k_{\mathcal{A}^k})^\top Q^k \vectt{p}^k_{\mathcal{A}^k} 
- \left(
(\vectt{p}^k_{\mathcal{A}^k})^\top A_{\mathcal{A}^k \mathcal{A}^k}\vectt{p}^k_{\mathcal{A}^k}
+ 2 (\vectt{p}^k_{\mathcal{A}^k})^\top A_{\mathcal{A}^k \mathcal{I}^k}\vectt{p}^k_{\mathcal{I}^k}
+ (\vectt{p}^k_{\mathcal{I}^k})^\top A_{\mathcal{I}^k \mathcal{I}^k}\vectt{p}^k_{\mathcal{I}^k}
\right)\\
& \quad = 
-(\vectt{p}^k_{\mathcal{A}^k})^\top A_{\mathcal{A}^k \mathcal{I}^k} (A_{\mathcal{I}^k \mathcal{I}^k} )^{-1} A_{\mathcal{I}^k \mathcal{A}^k}\vectt{p}^k_{\mathcal{A}^k} - (\vectt{p}^k_{\mathcal{I}^k})^\top A_{\mathcal{I}^k \mathcal{I}^k}\vectt{p}^k_{\mathcal{I}^k}
- 2 (\vectt{p}^k_{\mathcal{A}^k})^\top A_{\mathcal{A}^k \mathcal{I}^k}\vectt{p}^k_{\mathcal{I}^k}\\
& \quad = - (\vectt{\lambda}^k_{\mathcal{I}^k})^\top (A_{\mathcal{I}^k \mathcal{I}^k})^{-1}\vectt{\lambda}^k_{\mathcal{I}^k} = -\|\vectt{\lambda}^k_{\mathcal{I}^k}\|^2_{ (A_{\mathcal{I}^k \mathcal{I}^k})^{-1}}.
\end{align*}
By rearranging the equation, dividing through by $\|\vectt{u}^{k} - \vectt{u}^*\|^2_A$, and two final applications of \cref{lem:linear}\labelcref{lem:linear:iii}, we arrive at the result. 
\end{proof}
The convergence rate in \cref{th:convergence-rate} reveals that the convergence of \cref{alg:hik} is dictated by the size of the Lagrange multiplier iterate in the current inactive set. Moreover, for any $i \in \mathcal{I}^k \cap \mathcal{I}^{k-1}$ it holds that $\vectt{\lambda}^k_i = 0$ and for any $j \in \mathcal{I}^k \cap \mathcal{A}^{k-1}$ we have $\vectt{u}^k_j = \vectt{\varphi}_j$ and thus $\vectt{\lambda}^k_j \leq 0$. Hence $\vectt{\lambda}^k_{\mathcal{I}^k} \leq \vectt{0}$. In other words, $\| \vectt{\lambda}^k_{\mathcal{I}^k} \|^2_{(A_{\mathcal{I}^k,\mathcal{I}^k})^{-1}}$ is a measure of the dual feasibility violation of the Lagrange multiplier iterate. Moreover, if $\| \vectt{\lambda}^k_{\mathcal{I}^k} \|^2=0$, then $\vectt{\lambda}^k=\vectt{\lambda}^*$ and $\vectt{u}^k=\vectt{u}^*$.
We also note that in view of \cref{rem_hik} the assumptions of \cref{th:convergence-rate} are satisfied for $k\geq 2$ when $A$ is a non-singular symmetric M-matrix (i.e., a Stieltjes matrix). The latter is the case, e.g., for the obstacle problem in our setting.

In finite dimensions, equivalence to an SSN or the aforementioned $M$-matrix property of $A$, respectively, ensure that $\rho(\vectt{\lambda}^k) \to 0$ as $k \to \infty$. We may deduce the mesh-dependent convergence behaviour of \cref{alg:hik}, as $h\to 0$, by examining the infinite-dimensional Lagrange multiplier. The numerical evidence in \cref{fig:convergence} suggests that, for the obstacle problem, $\rho(\vectt{\lambda}^k) \to 1$ as $h \to 0$ for any $k$. Whereas, in the thin obstacle and Signorini problems,  $\rho(\vectt{\lambda}^k)$ is uniformly bounded below one. We further analyse this in \Cref{sec:ssn}.%

\section[Peeling]{Sticky active sets and the layer-by-layer peeling effect}
\label{sec:peeling}

This section is dedicated to explaining the layer-by-layer peeling effect observed for obstacle problems during the deactivation phase cf.~\cref{fig:peeling}. At the end of the section, we remark why the settings of the thin obstacle and Signorini problems do not cause layer-by-layer peeling. We fix a mesh and drop the subscript $_h$ in this section for brevity. Moreover, throughout this section we assume that $f\geq c_f>0$ and $\varphi_h = \varphi \equiv \bar \varphi > 0$.

Recall that a P1-FEM discretization is employed. Hence, all dofs $i \in \{1\colto N\}$ are associated with a node $x_i \in \mathbb{R}^d$, $d \in \{1,2,3\}$, such that the FEM basis functions satisfy $\phi_i(x_j) = \delta_{ij}$. Further, let $\omega(x_i)$ denote the star patch around a node of the triangulation $x_i \in \mathbb{R}^d$, i.e.
\begin{align}
\omega(x_i) \coloneqq \cup_{\{K \in \mathcal{T}_h\, : \, x_i \in \bar K\}} K,
\label{eq:star-patch}
\end{align}

\begin{definition}[Strict interior of $\mathcal{A}^k$]
We say that the dof $i \in \op{int} \mathcal{A}^k$ 
if, for all $x_j \in \overline{\omega(x_i)}$, it holds that $j  \in \mathcal{A}^k$. 
\end{definition}

Our next goal is to prove the following two results which are concerned with a detailed analysis of the (discrete) active/inactive set behavior of HIK iterations. 
\begin{theorem}[Sticky active sets]
\label{th:peeling}
Suppose that \cref{alg:hik} is applied to a P1-FEM discretization of \cref{prob:obstacle} resulting in \cref{pdas:1} with $B=I$ and $c=1$. 
At the $k^{\text{th}}$ HIK iteration, suppose further that $i \in \operatorname{int} \mathcal{A}^k$. Then $i \in \mathcal{A}^{k+1}$. 
\end{theorem}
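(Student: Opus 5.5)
The key observation is that for $i\in\mathcal{A}^k$ we have $\vectt{u}^{k+1}_i=\vectt{\varphi}_i=\bar\varphi$. As noted after \cref{alg:hik}, whether $i\in\mathcal{A}^{k+1}$ is then decided solely by the sign of $\vectt{\lambda}^{k+1}_i$. By \cref{eq:static} and \cref{lem:linear}\labelcref{lem:linear:i},
\begin{align*}
\vectt{\lambda}^{k+1}_i = \vectt{b}_i - (A\vectt{u}^{k+1})_i = (f,\phi_i) - \sum_{j} A_{ij}\vectt{u}^{k+1}_j .
\end{align*}
The stiffness row $A_{ij}$ is nonzero only for nodes $x_j\in\overline{\omega(x_i)}$. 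Since $i\in\operatorname{int}\mathcal{A}^k$, every such $j$ lies in $\mathcal{A}^k$. Hence $\vectt{u}^{k+1}_j=\bar\varphi$ for all $j$ in the support of row $i$, and the sum collapses to $\bar\varphi\sum_j A_{ij}$.

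Next I would show that $\sum_j A_{ij}=0$ for an interior node. This holds because $\sum_j\phi_j\equiv 1$ on $\omega(x_i)$ and $\nabla\phi_i$ is supported there, so $\sum_j A_{ij}=(\nabla\phi_i,\nabla 1)=0$. Two cases need care.
\begin{itemize}
\item If $x_i$ is a free node whose patch touches $\partial\Omega$, some neighbours are Dirichlet nodes that are not dofs. Those neighbours are not in $\mathcal{A}^k$, and their value $0$ differs from $\bar\varphi$. To handle this, I would read the definition of $\operatorname{int}\mathcal{A}^k$ as quantifying over all nodes of $\overline{\omega(x_i)}$. Boundary nodes are never in $\mathcal{A}^k$, so such an $i$ cannot be in $\operatorname{int}\mathcal{A}^k$, and we may assume the patch consists entirely of free dofs.
\item Alternatively, one could allow such an $i$. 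Then the sum equals $-\bar\varphi\sum_{j\in\partial}A_{ij}\ge 0$, since off-diagonal entries are nonpositive on the (Delaunay/acute) meshes used. In that case the argument still goes through, with an extra nonnegative term.
\end{itemize}

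Combining these steps gives $\vectt{\lambda}^{k+1}_i=(f,\phi_i)\ge c_f\int_\Omega\phi_i\,\dx>0$. Therefore $\vectt{\lambda}^{k+1}_i+\vectt{u}^{k+1}_i-\vectt{\varphi}_i=\vectt{\lambda}^{k+1}_i>0$, and so $i\in\mathcal{A}^{k+1}$.

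The main obstacle is the bookkeeping in the second step: verifying the row-sum identity, and making sure no nonzero stiffness entry couples $i$ to a dof outside $\mathcal{A}^k$. This needs the precise meaning of the star patch closure, which includes all vertices of cells containing $x_i$. It also needs the boundary-node caveat above. The positivity of $f$ and the constancy of $\varphi$ are exactly what make the remaining term positive. This explains why the argument fails for nonconstant obstacles or in the thin obstacle and Signorini settings, where the constrained dofs do not fill full patches.
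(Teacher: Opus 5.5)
Your proof is correct and is essentially the paper's argument run directly rather than by contradiction. The paper assumes some $i\in\operatorname{int}\mathcal{A}^k\cap\mathcal{I}^{k+1}$ and uses \cref{lem:switching}, i.e.\ $\vectt{\lambda}^{k+1}_i\le 0$ and the fact that row $i$ of $A$ does not couple to $\mathcal{I}^k$, together with $A\vectt{\varphi}=\vectt{0}$ on such rows, to reach $\vectt{b}_i\le 0$; this is just your identity $\vectt{\lambda}^{k+1}_i=\vectt{b}_i$ read contrapositively.

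One correction: your second bullet has the sign backwards. Since $\vectt{\lambda}^{k+1}_i=\vectt{b}_i-(A\vectt{u}^{k+1})_i$ and you computed $(A\vectt{u}^{k+1})_i=-\bar\varphi\sum_{j\in\partial}A_{ij}\ge 0$, the boundary coupling lowers $\vectt{\lambda}^{k+1}_i$. For small $h$ it typically drives $\vectt{\lambda}^{k+1}_i$ below zero, because $\vectt{b}_i=O(h^d)$ while $|A_{ij}|=O(h^{d-2})$. So that case genuinely fails. Only your first reading works, and it is exactly the paper's definition of $\operatorname{int}\mathcal{A}^k$, under which Dirichlet nodes are never in $\mathcal{A}^k$.
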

The proof is given at the end of this section. \cref{th:peeling} directly implies the following corollary.

\begin{corollary}[Peeling effect]
\label{cor:peeling}
Under the conditions of \cref{th:peeling}, if a dof $i$ associated with a node $x_i$ satisfies $i \in \mathcal{A}^k \cap \mathcal{I}^{k+1}$ then there must exist a node $x_j \in \overline{\omega(x_i)}$ such that $j \in \mathcal{I}^k$. In other words, a dof can only switch from the active set to the inactive set if it neighbours a dof in the inactive set at the $k^{\text{th}}$ HIK iteration.
\end{corollary}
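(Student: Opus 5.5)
The plan is to argue by contraposition, using \cref{th:peeling} directly. Fix an iteration $k$ and a dof $i$ with $i \in \mathcal{A}^k \cap \mathcal{I}^{k+1}$. Since $\mathcal{A}^{k+1}$ and $\mathcal{I}^{k+1}$ partition $\{1 \colto N\}$, we have $i \notin \mathcal{A}^{k+1}$. \cref{th:peeling} states that $i \in \operatorname{int}\mathcal{A}^k$ implies $i \in \mathcal{A}^{k+1}$. Taking the contrapositive under the same hypotheses (P1-FEM discretization of \cref{prob:obstacle}, $B=I$, $c=1$) gives $i \notin \operatorname{int}\mathcal{A}^k$.

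Next I unpack the definition of the strict interior. The statement $i \notin \operatorname{int}\mathcal{A}^k$ means there is a node $x_j \in \overline{\omega(x_i)}$ with $j \notin \mathcal{A}^k$. By the definition \cref{eq:inactive-active}, $\mathcal{I}^k$ is the complement of $\mathcal{A}^k$, so $j \in \mathcal{I}^k$. This is exactly the first assertion. Moreover $j \neq i$, because $i \in \mathcal{A}^k$. Hence the witness is a genuine neighbour and not the node itself.

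For the ``in other words'' reformulation, I will note a property of simplicial meshes. The nodes of $\overline{\omega(x_i)}$ other than $x_i$ are the vertices of the cells containing $x_i$, and each such vertex is joined to $x_i$ by an edge of the triangulation. So $x_j$ shares an edge with $x_i$, which is the neighbouring statement.

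The only delicate point is bookkeeping, not analysis. Nodes on $\partial\Omega$ carry no dof, since the Dirichlet conditions are built into $U_h$. I will therefore read the definition of $\operatorname{int}\mathcal{A}^k$ as quantifying over dof-carrying nodes $x_j$, and I must apply exactly the same convention in the corollary. With that convention fixed, the statement is an immediate logical consequence of \cref{th:peeling}, and no additional estimate is needed.
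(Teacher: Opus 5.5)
Your deduction (take the contrapositive of \cref{th:peeling}, then negate the definition of $\operatorname{int}\mathcal{A}^k$) is what the paper means when it says the theorem ``directly implies'' the corollary. The gap is in your last paragraph. The step ``$j\notin\mathcal{A}^k$, hence $j\in\mathcal{I}^k$'' only covers dof-carrying nodes. You dispose of the remaining case, a Dirichlet node as witness, by a convention under which \cref{th:peeling} itself, and hence the corollary, is false.

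A counterexample: take $d=1$ with $8$ cells, $h=1/8$, dofs at $x_m=m/8$, $m\in\{1\colto 7\}$, and the exact load $\vectt{b}_m=\int_\Omega f\phi_m\,\dx=20h$ used in the proof of \cref{th:peeling}. P1-FEM is then nodally exact, so $\vectt{u}^1_m=10x_m(1-x_m)\geq 70/64>1$ for all $m$. Hence $\mathcal{A}^1=\{1\colto 7\}$, $\mathcal{I}^1=\emptyset$, $\vectt{u}^2=\vectt{1}$, and $\vectt{\lambda}^2_1=\vectt{b}_1-[A\vectt{u}^2]_1=20h-1/h<0$, i.e.\ $1\in\mathcal{A}^1\cap\mathcal{I}^2$. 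The only dofs in $\overline{\omega(x_1)}$ are those at $x_1$ and $x_2$, and both are active. So dof $1$ lies in your dof-only interior, and since $\mathcal{I}^1=\emptyset$ no dof witness exists.

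The paper's proof of \cref{th:peeling} shows why. After the cancellations it needs $[A\vectt{\varphi}]_i=0$. For the stiffness matrix with the Dirichlet condition built in, this fails in rows whose star patch touches $\partial\Omega$: there $[A\vectt{\varphi}]_i=-\bar\varphi\sum_{x_j\in\partial\Omega}(\nabla\phi_j,\nabla\phi_i)$, with $\phi_j$ the hat functions of the Dirichlet nodes. In the example this equals $\bar\varphi/h=8$, which exceeds $\vectt{b}_1=2.5$, so no contradiction arises. Read literally, the theorem is valid only because a Dirichlet node is \emph{not} in $\mathcal{A}^k$, so dofs adjacent to $\partial\Omega$ are never in $\operatorname{int}\mathcal{A}^k$.

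Keep that literal reading. Your contrapositive then goes through unchanged, but the conclusion is that some node of $\overline{\omega(x_i)}$ is either an inactive dof or a Dirichlet node. A Dirichlet node has $u_h=0<\bar\varphi$, so it is naturally counted as inactive, and the ``neighbours an inactive dof'' reformulation must include such boundary neighbours. This only matters when $\mathcal{A}^k$ reaches the layer of nodes next to $\partial\Omega$, but that is precisely the case your convention gets wrong.
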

\cref{th:peeling} and, consequently \cref{cor:peeling}, reveal that if a dof is in the active set and all its neighbouring dofs are also in the active set then, at the next HIK iteration, the dof must remain in the active set. Conversely, only dofs on the boundary of the current active set are able to deactivate. We visualize this in \cref{fig:peeling2}.
\begin{figure}[h!]
\centering
\includegraphics[width =0.25 \textwidth]{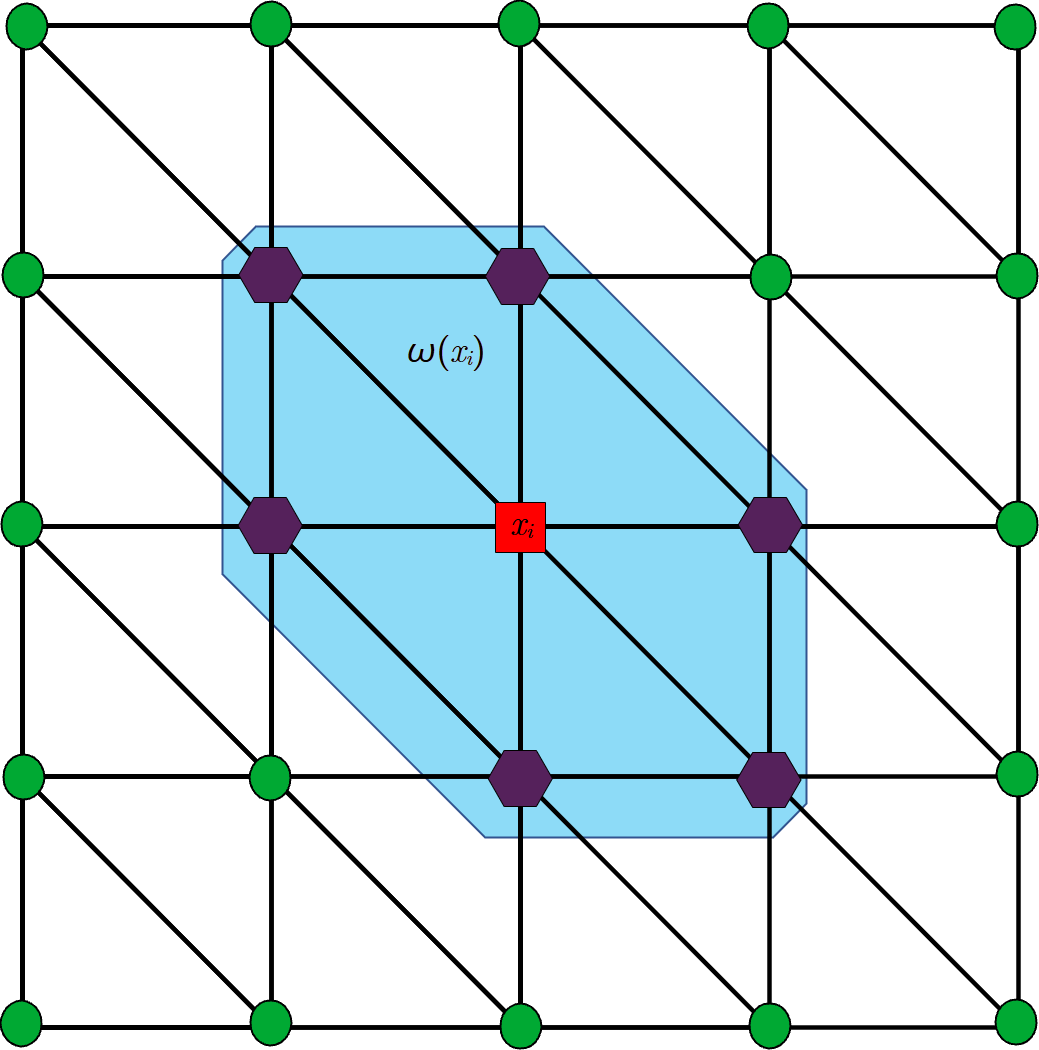}
\caption{The red square and purple hexagons represent active set dofs, $j \in \mathcal{A}^k$. The green dots are inactive set dofs $j \in \mathcal{I}^k$. The blue patch around the node $x_i$ represents its star patch $\omega(x_i)$ defined in \cref{eq:star-patch}. \cref{th:peeling} reveals that the dof $i$, associated with $x_i$, must remain in the active set at the next HIK iteration as all its neighbours in the star patch are also active set dofs. Conversely, the hexagonal dofs may (but not necessarily) deactivate as they neighbour inactive set dofs.}
\label{fig:peeling2}
\end{figure}

If the active set prediction is connected, then, in one dimension, \cref{cor:peeling} %
reveals that at most two dofs can deactivate (peel away from the obstacle) per HIK iteration.

In order to prove the result in \cref{th:peeling}, we first provide a technical lemma.
\begin{lemma}
\label{lem:switching}
Suppose that the conditions in \cref{th:peeling} hold. At the $k^{\text{th}}$ HIK iteration consider a dof $i \in \op{int} \mathcal{A}^k$. Then, for any $\vectt{v} \in \mathbb{R}^N$,
\begin{align}
[A_{\{1 \colto N\}, \mathcal{I}^k}  \vectt{v}_{\mathcal{I}^k}]_i = 0. \label{eq:switch1}
\end{align}
 Moreover suppose that $\mathcal{S}^k  = \{i  :  i \in \mathcal{A}^k \cap \mathcal{I}^{k+1}\} \neq \emptyset$. Then
\begin{align}
A_{\mathcal{S}^k, \{1 \colto N\}} \vectt{\delta}^k \geq - [A\vectt{u}^k - \vectt{b}]_{\mathcal{S}^k}. \label{eq:switch2}
\end{align}
\end{lemma}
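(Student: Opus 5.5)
The plan is to prove the two claims separately. Both follow directly from the sparsity pattern of the P1 stiffness matrix and from the update formulas \cref{eq:static}. Throughout, we regard $\vectt{\delta}^k \in \mathbb{R}^N$ as the full update $\vectt{u}^{k+1} - \vectt{u}^k$. Its inactive entries solve \cref{eq:reducedHIK}, and its active entries are $\vectt{\delta}^k_{\mathcal{A}^k} = (\vectt{\varphi} - \vectt{u}^k)_{\mathcal{A}^k}$ by step 6 of \cref{alg:hik}.

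\textbf{Step 1: proof of \cref{eq:switch1}.} We write
\begin{align*}
[A_{\{1 \colto N\}, \mathcal{I}^k}  \vectt{v}_{\mathcal{I}^k}]_i = \sum_{j \in \mathcal{I}^k} A_{ij} \vectt{v}_j, \qquad A_{ij} = (\nabla \phi_i, \nabla \phi_j).
\end{align*}
The support of $\phi_i$ is $\overline{\omega(x_i)}$. Hence $\nabla\phi_i \cdot \nabla\phi_j$ can be nonzero on a set of positive measure only if $\phi_j$ is not identically zero on some cell $K \subset \omega(x_i)$. That happens only if $x_j$ is a vertex of such a cell, i.e.\ $x_j \in \overline{\omega(x_i)}$. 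Consequently $A_{ij} = 0$ whenever $x_j \notin \overline{\omega(x_i)}$. Since $i \in \op{int}\mathcal{A}^k$, every $j$ with $x_j \in \overline{\omega(x_i)}$ lies in $\mathcal{A}^k$. So for every $j \in \mathcal{I}^k$ we have $A_{ij} = 0$, and the sum vanishes for any $\vectt{v}$.

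\textbf{Step 2: proof of \cref{eq:switch2}.} Fix $i \in \mathcal{S}^k = \mathcal{A}^k \cap \mathcal{I}^{k+1}$. This can only occur for $k \ge 1$, since $\mathcal{A}^0 = \emptyset$. Because $i \in \mathcal{A}^k$, \cref{eq:static} gives
\begin{align*}
\vectt{u}^{k+1}_i = \vectt{\varphi}_i, \qquad \vectt{\lambda}^{k+1}_i = -[A\vectt{u}^{k+1} - \vectt{b}]_i .
\end{align*}
Because $i \in \mathcal{I}^{k+1}$, the definition in step 8 of \cref{alg:hik} (with $c=1$) yields $\vectt{\lambda}^{k+1}_i + \vectt{u}^{k+1}_i - \vectt{\varphi}_i \le 0$. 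Since $\vectt{u}^{k+1}_i = \vectt{\varphi}_i$, this reduces to $\vectt{\lambda}^{k+1}_i \le 0$, i.e.\ $[A\vectt{u}^{k+1} - \vectt{b}]_i \ge 0$. Substituting $\vectt{u}^{k+1} = \vectt{u}^k + \vectt{\delta}^k$ gives $[A\vectt{\delta}^k]_i \ge -[A\vectt{u}^k - \vectt{b}]_i$. Collecting these inequalities over $i \in \mathcal{S}^k$ is exactly \cref{eq:switch2}.

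\textbf{Main obstacle.} Neither step involves real difficulty. The points that need care are:
\begin{itemize}
\item stating precisely the support/sparsity argument for $A_{ij}$ on a simplicial mesh, noting that entries with $x_j$ on the boundary of $\omega(x_i)$ are allowed to be nonzero, which is why the definition of $\op{int}\mathcal{A}^k$ uses the closure;
\item making explicit the convention that $\vectt{\delta}^k$ is extended to all of $\{1 \colto N\}$, so that $A_{\mathcal{S}^k,\{1\colto N\}}\vectt{\delta}^k$ is meaningful.
\end{itemize}
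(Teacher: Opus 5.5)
Your proposal is correct and follows the paper's proof essentially step for step. For \cref{eq:switch1} you use the same P1 stiffness-matrix sparsity argument. For \cref{eq:switch2} you combine the identity $A\vectt{u}^{k+1}+\vectt{\lambda}^{k+1}=\vectt{b}$ on $\mathcal{S}^k$ (which the paper reads off the first row of \cref{eq:HIKstep} and you read off step 7 of \cref{alg:hik}) with $\vectt{u}^{k+1}_{\mathcal{S}^k}=\vectt{\varphi}_{\mathcal{S}^k}$ and the definition of $\mathcal{I}^{k+1}$ to obtain $\vectt{\lambda}^{k+1}_{\mathcal{S}^k}\leq\vectt{0}$, exactly as the paper does.
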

\begin{proof}
We first show \cref{eq:switch1} and note that
\begin{align}
 [A_{\{1 \colto N\}, \mathcal{I}^k}  \vectt{v}_{\mathcal{I}^k}]_i =  \sum_{\ell \in \mathcal{I}^k} \int_\Omega \nabla \phi_i \cdot (\vectt{v}_\ell \nabla \phi_\ell) \, \dx. \label{eq:s3}
\end{align}
But since $i \in \op{int} \mathcal{A}^k$, $\op{supp}(\nabla \phi_i) \cap \op{supp}(\nabla \phi_\ell) \neq \emptyset$ only for some $\ell \in \mathcal{A}^k$. Hence \cref{eq:s3} evaluates to zero. The first row of \cref{eq:HIKstep} implies that
\begin{align}
A_{\mathcal{S}^k, \{1 \colto N\}} \vectt{\delta}^k + \vectt{\varepsilon}^k_{\mathcal{S}^k} = -(A\vectt{u}^k + \vectt{\lambda}^k - \vectt{b})_{\mathcal{S}^k}.
\end{align}
Moving $\vectt{\lambda}^k$ to the left, we discover that
\begin{align}
A_{\mathcal{S}^k, \{1 \colto N\}} \vectt{\delta}^k + \vectt{\lambda}^{k+1}_{\mathcal{S}^k} = -(A\vectt{u}^k  - \vectt{b})_{\mathcal{S}^k}. \label{eq:s5}
\end{align}
Since $\mathcal{S}^k \subseteq \mathcal{I}^{k+1}$, line 8 in \cref{alg:hik} implies that
\begin{align}
\vectt{\lambda}^{k+1}_{\mathcal{S}^k} + \vectt{u}^{k+1}_{\mathcal{S}^k} - \vectt{\varphi}_{\mathcal{S}^k} \leq \vectt{0}. \label{eq:s4}
\end{align}
But since $\mathcal{S}^k \subseteq \mathcal{A}^{k}$, then line 6 of \cref{alg:hik} implies that $\vectt{u}^{k+1}_{\mathcal{S}^k} - \vectt{\varphi}_{\mathcal{S}^k} = \vectt{0}$ which in turn reduces \cref{eq:s4} to 
\begin{align}
\vectt{\lambda}^{k+1}_{\mathcal{S}^k} \leq \vectt{0}. \label{eq:s6}
\end{align}
Substituting \cref{eq:s6} into \cref{eq:s5} we recover \cref{eq:switch2}.
\end{proof}

We now provide the proof of \cref{th:peeling}.
\begin{proof}[Proof of \cref{th:peeling}]
The result will be shown via a proof by contradiction. Suppose that there exists a dof $i \in \op{int} \mathcal{A}^k$ such that $i \in \mathcal{I}^{k+1}$, i.e., 
\begin{align}
\hat{\mathcal{S}}^k \coloneqq \{ i : i \in \op{int} \mathcal{A}^k \cap \mathcal{I}^{k+1} \} \neq \emptyset.
\end{align}
Since $\hat{\mathcal{S}}^k  \subseteq \mathcal{S}^k$ then \cref{eq:switch2} in \cref{lem:switching} implies that
$
A_{\hat{\mathcal{S}}^k, \{1 \colto N\}} \vectt{\delta}^k \geq -(A\vectt{u}^k  - \vectt{b})_{\hat{\mathcal{S}}^k}.
$
Differentiating the terms according to active and inactive sets, and noting that $\vectt{\delta}^k_{\mathcal{A}^k} = (\vectt{\varphi} - \vectt{u}^k)_{\mathcal{A}^k}$, we find that
\begin{align}\label{6.10}
A_{\hat{\mathcal{S}}^k, \mathcal{A}^k} (\vectt{\varphi} - \vectt{u}^k)_{\mathcal{A}^k} +  A_{\hat{\mathcal{S}}^k, \mathcal{I}^k} \vectt{\delta}^k_{\mathcal{I}^k} \geq - A_{\hat{\mathcal{S}}^k, \mathcal{A}^k}  \vectt{u}^k_{\mathcal{A}^k} - A_{\hat{\mathcal{S}}^k, \mathcal{I}^k}  \vectt{u}^k_{\mathcal{I}^k} + \vectt{b}_{\hat{\mathcal{S}}^k}.
\end{align}
By assumption $\varphi_h = \varphi \equiv \bar \varphi > 0$. Hence $\nabla \varphi_h = 0$ and, therefore, $A \vectt{\varphi} = \vectt{0}$ which implies that $A_{\hat{\mathcal{S}}^k, \mathcal{A}^k} \vectt{\varphi}_{\mathcal{A}^k} = \vectt{0}$. By \cref{eq:switch1} in \cref{lem:switching} the second terms in the left and right-hand side of \eqref{6.10}, respectively, equals the zero vector. Finally by moving the first term on the right-hand side to the left we conclude that
$
\vectt{b}_{\hat{\mathcal{S}}^k} \leq \vectt{0}.
$
However, by assumption $f \geq c_f > 0$ a.e.~in $\Omega$ and therefore $\vectt{b}_i = \int_\Omega f \phi_i \, \dx > 0$ for all $i \in \{1 \colto N\}$. But this contradicts $\vectt{b}_{\hat{\mathcal{S}}^k} \leq \vectt{0}$ and we conclude that $\hat{\mathcal{S}}^k = \emptyset$. 
\end{proof}

\begin{remark}[No peeling for thin obstacle and Signorini problems]
\label{rem:no-peeling}
In \Cref{sec:experiments}, we observed that layer-by-layer peeling does not occur when \cref{alg:hik} is applied to the thin obstacle and Signorini problems. This is because active set dofs on the codimension one obstacle always neighbour an ``inactive" set dof in the bulk. %
\end{remark}

\begin{remark}[Interpretation as $h \to 0$]
In the limit as $h \to 0$, an interpretation of \cref{th:peeling} is that the active set can only change by sets of codimension one. In \cref{prob:obstacle}, this means the active set can only change by sets of measure zero (yet possibly of positive capacity), and thus the solver stagnates. Whereas for \cref{prob:thin,prob:signorini}, where the active set has codimension one, progress is possible.
\end{remark}

\section{HIK in infinite dimensions}
\label{sec:ssn}

In this section, we attempt to generalize the HIK iteration to infinite dimensions for the obstacle and thin obstacle problems of \cref{prob:obstacle,prob:thin} when $d=2$. We recover convergence for \cref{prob:thin} but it is ultimately ill-posed for \cref{prob:obstacle}.

We remark that we are unable to construct an SSN in infinite dimensions, in the sense of \Cref{sec:pdas}, for \cref{prob:obstacle,prob:thin,prob:signorini}. Unlike the finite-dimensional setting \cref{pdas:1}, the equations \cref{eq:ob4,eq:signorini-nonsmooth} do not possess a bounded and invertible Newton derivative. The operator $(\cdot)_+ : L^q(\Omega) \to L^p(\Omega)$ is semismooth on $L^q(\Omega)$ provided that $1 \leq p < q \leq \infty$ \cite[Prop.~4.1]{hintermuller2002} where the norm gap  $p<q$ is essential cf.~\cite[Ex.~5.11]{ulbrich2002semismooth} and \cite[Sec.~4]{hintermuller2002}. However, any attempt at formulating \cref{eq:ob4,eq:signorini-nonsmooth} so that $(\cdot)_+ : L^q(\Omega) \to L^p(\Omega)$ for $p<q$, results in a Newton derivative that is not invertible. Hence, a Newton-like linearization of \cref{eq:ob4,eq:signorini-nonsmooth} would not lead to updates resulting in local superlinear convergence.

On the other hand, the HIK update as stated in \cref{eq:pdas-update} cleanly extends to infinite dimensions. Let $(-\Delta) \in \mathcal{L}(H^1_0(\Omega),H^{-1}(\Omega))$, $u^{k} \in H^1_0(\Omega)$, and $\lambda^k \in H^{-1}(\Omega)$. Suppose that $\varphi$, $u^k$ and $\lambda^k$ are sufficiently regular to be interpreted pointwise a.e.~on $\supp(\varphi)$ (e.g.~$\lambda \in L^p(\supp(\varphi))$ for some $p \geq 1$), and define
$\mathcal{A}^k = \{\lambda^k + u^k - \varphi > 0 \; \text{for a.e.~}  x \in \supp(\varphi)\}$ and $\mathcal{I}^k = \op{int}(\Omega \backslash \mathcal{A}^k)$. The HIK iterate is given by, find $(u^{k+1}, \lambda^{k+1}) \in H^1_0(\Omega) \times H^{-1}(\Omega)$ satisfying \cite[Sec.~4]{hintermuller2002}:
\begin{align}
(-\Delta) u^{k+1} + \lambda^{k+1} = f \;\; \text{in} \; H^{-1}(\Omega), \quad u^{k+1} = \varphi \; \text{on} \; \mathcal{A}^k, \quad \lambda^{k+1} = 0 \; \text{on} \; \mathcal{I}^k. \label{eq:inf:1}
\end{align}
For the (thin) obstacle problem, the iterate \cref{eq:inf:1} is equivalent to solving,
\begin{align}
(\nabla u^{k+1}, \nabla v) = (f, v) \; \forall  v \in H^1_0(\mathcal{I}^k), \;\; u|_{\partial \Omega} = 0, \;\; u|_{\mathcal{A}^k} = \varphi_{\mathcal{A}^k},
\label{eq:inf:2}
\end{align}
and setting $\langle \lambda^{k+1}, w \rangle = (f,w) -(\nabla u^{k+1},\nabla w)$ for all $w \in H^1_0(\Omega)$.

\subsection{HIK for \cref{prob:thin}}
The numerical evidence in \Cref{sec:experiments} suggests that the infinite-dimensional HIK solver converges when applied to the thin obstacle problem of \cref{prob:thin} but loses local superlinear convergence. 

\subsubsection*{Well-posedness} In this subsection we show that the HIK iteration applied to \cref{prob:thin} is well-posed when $d=2$.  For this purpose fix $u^0 = \lambda^0 = 0$, and $\Gamma = \supp(\varphi) = \{(x_1,x_2) \in \Omega : x_1 = 1/2\}$.  A calculation reveals that $u^1 = (-\Delta)^{-1} f$ with $u^1|_{\partial \Omega}  = 0$, $\lambda^1 = 0$ and $\mathcal{A}^1 = \{ u^1 > \varphi\equiv 1 \; \text{a.e.~on} \; \Gamma\}$. Note that $\mathcal{A}^1$ is a one-dimensional object.

The next HIK step is nontrivial. We observe that $\mathcal{I}^1$ is not a Lipschitz domain but corresponds to a unit square domain with a crack compactly embedded in $\Gamma$. Nevertheless, thanks to the Lax--Milgram theorem \cite[Th.~2.2.1.2]{grisvard2011}, the following problem
\begin{align}
(\nabla u^2, \nabla v) = (f,v) \; \forall v \in H^1_0(\mathcal{I}^1),\;\; \text{subject to} \; u^2|_{\partial \Omega} = 0, \;\; u^2|_{\mathcal{A}^1} = \varphi|_{\mathcal{A}^1} = 1,
\end{align}
has a unique solution $u^2 \in H^1(\Omega)$ which is visualized in \cref{fig:jumps}. Elliptic regularity guarantees that $u^2$ is smooth in the interior of $\mathcal{I}^1$, i.e.~away from $\partial \Omega$ and $\mathcal{A}^1$ \cite[Ch.~6, Th.~3]{Evans2010}. In fact, since we are considering a polygonal (albeit non-Lipschitz) two-dimensional domain, we may characterize the behaviour of $u^2$ further via a decomposition. Let $x_a, x_b \in \Gamma \subset \mathbb{R}^2$ denote the endpoints (crack tips) of $\mathcal{A}^1$. Then \cite[Th.~4.4.3.7]{grisvard2011},
\begin{align*}
u^2 = u_r  + u_a + u_b
\end{align*}
where $u_r \in H^2(\Omega)$, and for $j \in \{a,b\}$, $u_j = r_j^{1/2} \sin(\theta_j/2) \eta_j$ where $\eta_j$ are smooth functions and $(r_j, \theta_j) = (|x-x_j|, \operatorname{arctan}((x_2-[x_j]_2)/(x_1-[x_j]_1)))$ are the polar coordinates centred at $x_j \in \mathbb{R}^2$. Hence, the loss of regularity in $u^2$ is due to square-root singularities concentrated at the crack tips. A calculation reveals that $r_j^{1/2} \nabla^2 u_j \in L^2(\Omega)^{2 \times 2}$, $j \in \{a,b\}$. Hence, by H\"older's inequality, for any $1 \leq p < 2$,
\begin{align*}
\|\nabla^2 u_j  \|_{L^p(\Omega)} \leq \|r_j^{1/2} \nabla^2 u_j\|_{L^2(\Omega)} \| r_j^{-1/2} \|_{L^{2p/(2-p)}(\Omega)}.
\end{align*}
The right-hand side is bounded provided $2p/(2-p) < 4$ and thus if $1  \leq p < 4/3$. Hence, we conclude that $u^2 \in W^{2,p}(\Omega)$ for any $p \in [1,4/3)$.

Now by splitting $\Omega$ into $\Omega_- = \{ x_1 < 1/2\}$ and $\Omega_+ = \{ x_1 > 1/2\}$, we recover two open convex and bounded domains. Hence, since $u^2_\pm \in W^{2,p}(\Omega_\pm)$, then $\nabla u^2_\pm \in W^{1,p}(\Omega_\pm)^2$. Thus the boundary trace $\nabla u_\pm|_{\Gamma}$ is well-defined on either side of $\Gamma$ and, therefore, the jump in the gradient across $\Gamma$, denoted $\lsb \nabla u^2 \rsb$, is well-defined on $\Gamma$. Here we use $\lsb \nabla v \rsb \coloneqq \nabla v^- \cdot n^- + \nabla v^+ \cdot n^+$, and $n^\pm$ is the normal direction on either side of $\Gamma$. Moreover, $\lsb \nabla u^2 \rsb \in W^{1-1/p,p}(\Gamma) \subset L^{q}(\Gamma)$ for any $q \in [1,2p/(3-p))$ (i.e.~$1 \leq q < 8/5$).

Now, by an integration by parts, we see that for all $v \in H^1_0(\Omega)$
\begin{align}
\begin{split}
\langle \lambda^2, v\rangle = - \int_{\Gamma} \lsb \nabla u^2  \rsb v \, \ds = - \int_{\mathcal{A}^1} \lsb \nabla u^2  \rsb v \, \ds, 
\end{split}
\label{eq:lambda:2}
\end{align}
as $u^2$ has no jump across $\Gamma \backslash \mathcal{A}^1$. Hence, we may identify $\lambda^2 = -\lsb \nabla u^2  \rsb$ and conclude that $\lambda^2 \in L^{q}(\Gamma)$, $q \in [1,8/5)$, which may be interpreted pointwise a.e.~on $\Gamma$.
Hence, we define the next active set $\mathcal{A}^2 = \{ {\lambda}^2 + u^2|_{\Gamma} - \varphi > 0 \; \text{a.e.~in} \; \Gamma\} = \{ \lsb \nabla u^2 \rsb < 0 \; \text{a.e.~in} \; \Gamma\}$. This analysis holds true for any iterate $k \geq 2$, and thus the HIK iterates satisfy $(u^k, \lambda^k) \in W^{2,p}(\Omega) \times L^{q}(\Gamma)$, for any $p \in [1,4/3)$, $q \in [1,8/5)$ and, therefore, $\mathcal{A}^k = \{ \lsb \nabla u^k \rsb < 0 \; \text{a.e.~in} \; \Gamma\}$ is well-defined.

\begin{remark}[Well-posedness for Signorini]
When HIK is applied to the Signorini problem of \cref{prob:signorini} with $d=2$, the loss in regularity for the second iterate $u^2$ will be due to a switch from a natural to a Dirichlet boundary condition compactly embedded on the top edge of the domain where $\{x_2 = 1\}$. The singularity at this switch is precisely the same singularity at the crack tips of \cref{prob:thin} \cite[Ex.~5.5.4]{Brenner2008}, \cite[Th.~4.4.3.7]{grisvard2011}. We conjecture the Lagrange multiplier $\lambda^k$ lives in $L^q(\Gamma_C)$ for $q \in [1,8/5)$ for every iterate $k \geq 2$ resulting in a well-posed solver for \cref{prob:signorini}.
\end{remark}

\subsubsection*{Convergence as $h \to 0$} To show that progress is made with each HIK iteration for \cref{prob:thin}, we first show that, unless we are at the solution $\lambda^*$, $\lambda^k$ must have regions where it is negative. 
For the remainder of this subsection, we shall assume that $\Gamma$ is represented exactly by a straight chain of shared edges in all the meshes $\{\mathcal{T}_h\}_h$.
\begin{proposition}
\label{prop:4}
Suppose that $u^k \leq \varphi$ a.e.~on $\Gamma$. If $(u^k, \lambda^k) \neq (u^*,\lambda^*)$, then there exists a subset $E^k \subset \Gamma$ with nonzero one-dimensional Lebesgue measure such that $\lambda^k < 0$ a.e.~on $E^k$.
\end{proposition}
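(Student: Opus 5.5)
We argue by contradiction, using a variational characterization of the thin obstacle solution. Suppose $\lambda^k \geq 0$ a.e.\ on $\Gamma$. From the construction in \cref{eq:inf:1}--\cref{eq:inf:2}, for $k\geq 2$ the iterate satisfies $(\nabla u^k,\nabla v) + \int_\Gamma \lambda^k v\,\ds = (f,v)$ for all $v\in H^1_0(\Omega)$. The regularity shown above gives $\lambda^k\in L^q(\Gamma)$ with $q\in[1,8/5)$. Moreover, $\lambda^k = -\lsb \nabla u^k\rsb$ vanishes on $\Gamma\setminus\mathcal{A}^{k-1}$, while $u^k=\varphi$ on $\mathcal{A}^{k-1}$. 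Hence complementarity $\int_\Gamma \lambda^k (u^k-\varphi)\,\ds = 0$ holds automatically, pointwise a.e.\ in fact. Primal feasibility $u^k\le\varphi$ on $\Gamma$ is assumed. If we also had dual feasibility $\lambda^k\geq 0$, then $(u^k,\lambda^k)$ would satisfy the full KKT system of the thin obstacle problem, the analogue of \cref{eq:ob3} with the pairing on $\Gamma$.

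Since the KKT system characterizes the unique minimizer over $K$, we would get $u^k=u^*$ and then $\lambda^k=\lambda^*$. To make this step concrete, I would check directly that $u^k$ is optimal. For any $w\in K$, we have $(\nabla u^k,\nabla(w-u^k)) - (f,w-u^k) = -\int_\Gamma \lambda^k (w-u^k)\,\ds$. This equals $-\int_\Gamma \lambda^k(w-\varphi)\,\ds \ge 0$, using complementarity, $\lambda^k\ge0$, and $w|_\Gamma\le\varphi$. This is exactly the variational inequality of the strictly convex problem, whose solution is unique by Stampacchia/Lax--Milgram. Uniqueness of $\lambda^*$ then follows from the equation $\lambda = f+\Delta u$ in $H^{-1}(\Omega)$. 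This contradicts $(u^k,\lambda^k)\ne(u^*,\lambda^*)$. Therefore $\{\lambda^k<0\}\subset\Gamma$ has positive one-dimensional measure, and we take $E^k$ to be this set, defined up to a null set.

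The case $k=1$ also needs handling, since there $\lambda^1=0$. Then $\lambda^1\geq0$ holds trivially, and the same argument would give $u^1=u^*$. So either the statement is vacuous there, or the proposition is implicitly meant for $k\ge2$. I would state that the argument covers every $k$ for which $\lambda^k$ is the $L^q(\Gamma)$ function produced by the HIK step.

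The main technical obstacle is justifying the identities on $\Gamma$ rigorously. The first is the integral representation $\langle\lambda^k,v\rangle=\int_\Gamma\lambda^k v\,\ds$ for traces of $H^1_0$ functions. Here $\lambda^k\in L^q(\Gamma)$ with $q>1$ pairs with $v|_\Gamma\in H^{1/2}(\Gamma)\subset L^{r}(\Gamma)$ for all $r<\infty$, so the integral is finite; this was essentially established in \cref{eq:lambda:2}. The second is that complementarity holds pointwise, which requires care at the crack tips, but these are a null set. Finally, the mesh assumption on $\Gamma$ is not needed for this infinite-dimensional statement itself, only for later use.
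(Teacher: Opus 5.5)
Your proof is correct and follows the paper's argument: assume $\lambda^k \ge 0$ a.e.\ on $\Gamma$, check stationarity, primal and dual feasibility and complementarity, then conclude $(u^k,\lambda^k)=(u^*,\lambda^*)$ by uniqueness of the KKT point. You obtain complementarity more directly than the paper's four-case split, from $\lambda^k=0$ off $\mathcal{A}^{k-1}$ and $u^k=\varphi$ on $\mathcal{A}^{k-1}$, and your explicit variational-inequality check fills in the optimality step that the paper only asserts.
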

\begin{proof}
Suppose the converse holds. Then $(u^k, \lambda^k)$ satisfies stationarity $(-\Delta) u^k + \lambda^k = f$ in $H^{-1}(\Omega)$, primal feasibility $u^k \leq \varphi$ a.e.~on $\Gamma$ and dual feasibility $\lambda^k \geq 0$ a.e.~on $\Gamma$. Our last step is to check complementarity. We have that $\lambda^k = 0$ a.e.~on $\mathcal{I}^{k-1}$. Hence, on $\mathcal{I}^{k-1} \cap \mathcal{A}^k$, $u^k > \varphi$ a.e.~which contradicts the assumptions and thus $\mathcal{I}^{k-1} \cap \mathcal{A}^k = \emptyset$. Whereas on  $\mathcal{I}^{k-1} \cap \mathcal{I}^k$, we necessarily have that $u^k \leq \varphi$ a.e. Similarly, we have that $u^k = \varphi$ a.e.~on $\mathcal{A}^{k-1}$. Per the definitions, we therefore have that $\lambda^k > 0$ a.e.~on $\mathcal{A}^{k-1} \cap \mathcal{A}^k$ and $\lambda^k = 0$ a.e.~on $\mathcal{A}^{k-1} \cap \mathcal{I}^k$. Combining these four cases, we conclude that $\lambda^k (u^k - \varphi) = 0$ a.e.~on $\Gamma$. Hence, complementarity holds. Thus, $(u^k, \lambda^k)$ satisfies the first-order optimality conditions of \cref{eq:setup:scalar} and, therefore, is the solution. But $(u^k, \lambda^k) \neq (u^*,\lambda^*)$, a contradiction.
\end{proof}
By the definition of $\mathcal{A}^k$, such a set $E^k \subset \mathcal{I}^k$ and $E^k \cap \mathcal{A}^k = \emptyset$. Hence, $\lambda^k$ is partially supported in $\mathcal{I}^k$ and
\begin{align*}
\| \lambda^k \|_{H^{-1}(\mathcal{I}^k)} = \sup_{v \in H^1_0(\mathcal{I}^k)} \frac{|\langle \lambda^k, v\rangle|}{\| \nabla v\|_{L^2(\Omega)}} \geq c^k > 0 \;\; \text{for some constant $c^k$.}
\end{align*}

Recall, from \cref{th:convergence-rate}, that the convergence of HIK as $h \to 0$ is determined by the behaviour of $\| (\vectt{\lambda}^k_h)_{\mathcal{I}^k} \|_{(A_{\mathcal{I}^k,\mathcal{I}^k})^{-1}}$. We first show that $\| (\vectt{\lambda}_h^k)_{\mathcal{I}^k} \|_{(A_{\mathcal{I}^k_h,\mathcal{I}^k_h})^{-1}}$ can be rewritten in a more familiar notation.

\begin{lemma}
\label{lem:dual-norm}
Define the functional $\lambda^{k}_h$ via its coefficient vector as $\langle \lambda^{k}_h, v_h \rangle_{U_h^*, U_h} = (\vectt{\lambda}^{k}_h)^\top \vectt{v}_h$. Let $U_h(\mathcal{I}^k_h) \coloneqq \{ v_h \in U_h : [\vectt{v}_h]_i = 0 \; \forall \; i \in \mathcal{A}^k_h\}$. Then
\begin{align*}
\|\vectt{\lambda}^{k}_h\|_{A^{-1}} &= \| \lambda^k_h\|_{U_h^*} \coloneqq \sup_{v_h \in U_h} \frac{|\langle \lambda^k_h, v_h \rangle|}{\| \nabla v_h\|_{L^2(\Omega)}},\\
\| [\vectt{\lambda}^k_h]_{\mathcal{I}^k_h} \|_{(A_{\mathcal{I}^k_h,\mathcal{I}^k_h})^{-1}} &= \| \lambda^k_h\|_{U_h^*(\mathcal{I}^k_h)} \coloneqq \sup_{v_h \in U_h(\mathcal{I}^k_h)} \frac{|\langle \lambda^k_h, v_h \rangle|}{\| \nabla v_h\|_{L^2(\Omega)}}.
\end{align*}
\end{lemma}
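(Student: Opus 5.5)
Both identities are instances of one fact: for a symmetric positive-definite matrix $G$, the norm $\|\vectt{w}\|_{G^{-1}}$ is the dual norm of $\|\cdot\|_G$ under the Euclidean pairing, that is,
\begin{align*}
\|\vectt{w}\|_{G^{-1}} = \sup_{\vectt{v}\neq \vectt{0}} \frac{|\vectt{w}^\top \vectt{v}|}{\|\vectt{v}\|_G}.
\end{align*}
I will prove this first. Cauchy--Schwarz in the $G$-inner product gives $|\vectt{w}^\top\vectt{v}| = |(G^{-1}\vectt{w})^\top G \vectt{v}| \leq \|G^{-1}\vectt{w}\|_G \|\vectt{v}\|_G$. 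Since $\|G^{-1}\vectt{w}\|_G^2 = \vectt{w}^\top G^{-1}\vectt{w} = \|\vectt{w}\|_{G^{-1}}^2$, the supremum is at most $\|\vectt{w}\|_{G^{-1}}$. Equality is attained at $\vectt{v} = G^{-1}\vectt{w}$. The case $\vectt{w}=\vectt{0}$ is trivial.

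For the first identity, take $G = A$, where $A$ is the stiffness matrix on $U_h$ with Dirichlet conditions built in, so it is symmetric positive-definite. Every $v_h\in U_h$ corresponds to a unique coefficient vector $\vectt{v}_h\in\mathbb{R}^N$, and
\begin{align*}
\|\nabla v_h\|^2_{L^2(\Omega)} = \sum_{i,j}[\vectt{v}_h]_i[\vectt{v}_h]_j(\nabla\phi_i,\nabla\phi_j) = \vectt{v}_h^\top A\vectt{v}_h = \|\vectt{v}_h\|_A^2.
\end{align*}
By definition, $\langle\lambda^k_h,v_h\rangle = (\vectt{\lambda}^k_h)^\top\vectt{v}_h$. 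Rewriting the supremum over $U_h$ as a supremum over $\mathbb{R}^N$, the dual-norm fact gives $\|\lambda^k_h\|_{U_h^*} = \|\vectt{\lambda}^k_h\|_{A^{-1}}$.

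For the second identity, elements of $U_h(\mathcal{I}^k_h)$ are exactly the functions whose coefficient vectors vanish on $\mathcal{A}^k_h$. They are therefore in bijection with vectors $\vectt{y}\in\mathbb{R}^{|\mathcal{I}^k_h|}$, extended by zero. For such $v_h$,
\begin{align*}
\|\nabla v_h\|_{L^2(\Omega)}^2 = \vectt{y}^\top A_{\mathcal{I}^k_h,\mathcal{I}^k_h}\vectt{y},\qquad \langle\lambda^k_h,v_h\rangle = ([\vectt{\lambda}^k_h]_{\mathcal{I}^k_h})^\top\vectt{y},
\end{align*}
because the entries of $\vectt{v}_h$ on $\mathcal{A}^k_h$ are zero. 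The principal submatrix $A_{\mathcal{I}^k_h,\mathcal{I}^k_h}$ is symmetric positive-definite, being a compression of an SPD matrix. Applying the dual-norm fact with $G = A_{\mathcal{I}^k_h,\mathcal{I}^k_h}$ gives the second identity.

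The only point needing any care is the bookkeeping in the second identity: only the inactive components of $\vectt{\lambda}^k_h$ enter the pairing, and the maximizer $\vectt{y} = (A_{\mathcal{I}^k_h,\mathcal{I}^k_h})^{-1}[\vectt{\lambda}^k_h]_{\mathcal{I}^k_h}$, extended by zero, does lie in $U_h(\mathcal{I}^k_h)$. If $\mathcal{I}^k_h=\emptyset$, both sides are zero under the convention that a supremum over the trivial space is zero.
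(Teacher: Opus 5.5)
Your proof is correct and is essentially the paper's argument written out in coordinates. The paper simply invokes the Riesz representation theorem, and your Cauchy--Schwarz bound, attained at $G^{-1}\vectt{w}$ (the coefficient vector of the Riesz representer of $\lambda^k_h$ in the $A$- or $A_{\mathcal{I}^k_h,\mathcal{I}^k_h}$-inner product), is exactly that theorem made explicit.
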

\begin{proof}
The result follows thanks to the Riesz representation theorem.
\end{proof}
\cref{lem:dual-norm} suggests that $\| [\vectt{\lambda}^k_h]_{\mathcal{I}^k_h} \|_{(A_{\mathcal{I}^k_h,\mathcal{I}^k_h})^{-1}} \to \| \lambda^k \|_{H^{-1}(\mathcal{I}^k)}$ as $h \to 0$. We now prove this assertion via two lemmas and a proposition.

\begin{lemma}
\label{lem:1}
The following bound holds:
\begin{align}
\left| \| \lambda^k_h\|_{U^*_h(\mathcal{I}^k_h)} - \|\lambda^k\|_{U^*_h(\mathcal{I}^k_h)}\right| \leq \| \nabla(u^k-u^k_h)\|_{L^2(\Omega)}.
\end{align}
\end{lemma}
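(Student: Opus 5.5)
The plan is to compare the two functionals directly on the discrete test space $U_h(\mathcal{I}^k_h)$ and then apply the reverse triangle inequality for the dual seminorm. Both quantities $\|\lambda^k_h\|_{U^*_h(\mathcal{I}^k_h)}$ and $\|\lambda^k\|_{U^*_h(\mathcal{I}^k_h)}$ are suprema over the same set $v_h \in U_h(\mathcal{I}^k_h)$ with the same denominator $\|\nabla v_h\|_{L^2(\Omega)}$. So $\|\cdot\|_{U^*_h(\mathcal{I}^k_h)}$ is a seminorm on functionals restricted to this subspace. Hence
\begin{align*}
\left| \| \lambda^k_h\|_{U^*_h(\mathcal{I}^k_h)} - \|\lambda^k\|_{U^*_h(\mathcal{I}^k_h)}\right| \leq \| \lambda^k_h - \lambda^k\|_{U^*_h(\mathcal{I}^k_h)},
\end{align*}
and it suffices to bound $\langle \lambda^k_h - \lambda^k, v_h\rangle$ for $v_h \in U_h(\mathcal{I}^k_h)$.

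The key step is to write both multipliers as residuals of the same equation.

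\textbf{Continuous multiplier.} By the infinite-dimensional HIK update \cref{eq:inf:1}--\cref{eq:inf:2}, $\langle \lambda^k, w\rangle = (f,w) - (\nabla u^k, \nabla w)$ for all $w \in H^1_0(\Omega)$, in particular for $w = v_h \in U_h \subset H^1_0(\Omega)$.

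\textbf{Discrete multiplier.} By \cref{lem:linear}\labelcref{lem:linear:i}, for $k\geq 1$ we have $A\vectt{u}^k_h + \vectt{\lambda}^k_h = \vectt{b}$. Testing with $\vectt{v}_h$ and using $A_{ij} = (\nabla\phi_i,\nabla\phi_j)$ and $\vectt{b}_i = (f,\phi_i)$ gives $\langle \lambda^k_h, v_h\rangle = (\vectt{\lambda}^k_h)^\top \vectt{v}_h = (f, v_h) - (\nabla u^k_h, \nabla v_h)$. The identity $\vectt{b}_i = (f,\phi_i)$ holds exactly for the constant $f\equiv 20$, since then $M\vectt{f}$ is the exact load vector; otherwise an extra data-approximation term would appear.

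Subtracting the two identities gives
\begin{align*}
\langle \lambda^k_h - \lambda^k, v_h\rangle = (\nabla(u^k - u^k_h), \nabla v_h).
\end{align*}
The Cauchy--Schwarz inequality bounds the right-hand side by $\|\nabla(u^k-u^k_h)\|_{L^2(\Omega)}\|\nabla v_h\|_{L^2(\Omega)}$. Dividing by $\|\nabla v_h\|_{L^2(\Omega)}$ and taking the supremum over $U_h(\mathcal{I}^k_h)$ then yields the claimed bound. Note that the supremum could even be taken over all of $U_h$, so the restriction to $\mathcal{I}^k_h$ plays no role here.

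The main point requiring care, rather than a genuine obstacle, is the bookkeeping of which objects the lemma refers to. Here $u^k_h$ is the discrete HIK iterate with $k\geq 1$, so that \cref{lem:linear}\labelcref{lem:linear:i} applies. The continuous $\lambda^k$ must be understood as the $H^{-1}$ residual functional, so that it can be evaluated on $v_h$ and compared with $\lambda^k_h$ through the pairing $\langle \cdot,\cdot\rangle_{U_h^*,U_h}$ of \cref{lem:dual-norm}. Once these identifications are in place, the estimate is a one-line consequence of the shared residual structure.
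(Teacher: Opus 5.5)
Your proposal is correct and follows the paper's proof essentially verbatim. Like the paper, you use the reverse triangle inequality for the dual seminorm; the paper immediately enlarges the supremum to all of $U_h$, which you also note is harmless. You then write both multipliers as residuals via \cref{lem:linear}\labelcref{lem:linear:i} and \cref{eq:inf:1}, subtract, and apply Cauchy--Schwarz. Your use of the identification $\vectt{b}_i=(f,\phi_i)$ is the same one the paper takes for granted, e.g.\ in the proof of \cref{th:peeling}.
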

\begin{proof}
By definition
\begin{align}
\left|\| \lambda^k_h \|_{U_h^*(\mathcal{I}^k_h)} - \| \lambda^k \|_{U_h^*(\mathcal{I}^k_h)} \right|
\leq \sup_{v_h \in U_h(\mathcal{I}^k_h)} \frac{|\langle \lambda^k_h-\lambda^k, v_h \rangle|}{\| \nabla v_h\|_{L^2(\Omega)}} \leq \|\lambda^k_h - \lambda^k \|_{U_h^*}.
\label{eq:hlim1}
\end{align}
It follows from \cref{lem:linear}\labelcref{lem:linear:i} and \cref{eq:inf:1} that, for all $v_h \in U_h$,
\begin{align*}
\langle \lambda^k - \lambda^k_h, v_h \rangle = (\nabla(u^k_h - u^k), \nabla v_h)_{L^2(\Omega)} \leq \|\nabla (u^k- u^k_h)\|_{L^2(\Omega)} \|\nabla v_h\|_{L^2(\Omega)}. 
\end{align*}
Hence,
\begin{align}
\| \lambda^k - \lambda^k_h \|_{U_h^*} \leq \|\nabla (u^k-u^k_h)\|_{L^2(\Omega)}.
\label{eq:hlim2}
\end{align}
By combining \cref{eq:hlim1,eq:hlim2}, the assertion follows. 
\end{proof}

As there is no potential for confusion, for the remainder of this section, we overload the notation ${\mathcal{I}_h^k} \coloneqq \bigcup_{v_h \in U_h(\mathcal{I}_h^k)} \operatorname{supp}(v_h)$ and $\mathcal{A}^k_h = \Omega \backslash \mathcal{I}^k_h$, i.e.~for \cref{prob:thin}, $\mathcal{A}^k_h = \{ \lsb \nabla u^k_h \rsb < 0 \; \text{on} \; \Gamma \}$ and $\mathcal{I}^k_h = \Omega \backslash \mathcal{A}^k_h$.
\begin{lemma}
\label{lem:2}
Suppose that $u^k_h \to u^k$ strongly in $H^1(\Omega)$ as $h \to 0$. Then, \newline $|\mathcal{A}^k \backslash \mathcal{A}^k_h| \to 0$.
\end{lemma}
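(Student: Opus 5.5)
The plan is a contradiction argument: extract a weak-$*$ limit of the indicator functions of the "lost" part of the active set, and test it against the jumps $\lsb \nabla u^k_h \rsb$. With the overloaded notation, $\mathcal{A}^k = \{\lsb \nabla u^k \rsb < 0\}$ and $\mathcal{A}^k_h = \{\lsb \nabla u^k_h \rsb < 0\}$ are both subsets of $\Gamma$, and $|\cdot|$ is one-dimensional measure on $\Gamma$. Since $\Gamma$ is a chain of mesh edges, $\lsb \nabla u^k_h\rsb$ is piecewise constant on the edges of $\Gamma$. Set $E_h \coloneqq \mathcal{A}^k \backslash \mathcal{A}^k_h$, so that
\begin{align*}
\lsb \nabla u^k \rsb < 0 \quad \text{and} \quad \lsb \nabla u^k_h \rsb \geq 0 \quad \text{a.e.~on } E_h.
\end{align*}

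Suppose the claim fails. Then there exist $\eta>0$ and a sequence $h\to 0$ with $|E_h| \geq \eta$. The indicators $\chi_h \coloneqq 1_{E_h}$ are bounded in $L^\infty(\Gamma)$, so by Banach--Alaoglu a subsequence converges weak-$*$ to some $g \in L^\infty(\Gamma)$. This limit satisfies $0 \leq g \leq 1_{\mathcal{A}^k}$ and $\int_\Gamma g \, \ds \geq \eta$. Because $\lsb \nabla u^k \rsb<0$ a.e.~on $\mathcal{A}^k$, this gives
\begin{align*}
\int_\Gamma \lsb \nabla u^k \rsb g \, \ds < 0.
\end{align*}
On the other hand, $\int_\Gamma \lsb \nabla u^k_h \rsb \chi_h \, \ds \geq 0$ for every $h$. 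If
\begin{align*}
\lsb \nabla u^k_h \rsb \to \lsb \nabla u^k \rsb \quad \text{strongly in } L^1(\Gamma),
\end{align*}
then we may pass to the limit in the product of a strongly convergent $L^1$ sequence with a weak-$*$ convergent $L^\infty$ sequence. This yields $\int_\Gamma \lsb \nabla u^k\rsb g \,\ds \geq 0$, which is the desired contradiction.

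The main obstacle is therefore this strong $L^1(\Gamma)$ convergence of the gradient jumps. The hypothesis $u^k_h \to u^k$ in $H^1(\Omega)$ alone does not control traces of gradients, so additional structure must be used. I would try the following steps in order.

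\textbf{Step 1 (weak convergence of multipliers).} By \cref{lem:linear}\labelcref{lem:linear:i} and \cref{eq:inf:1}, the residual functionals $w \mapsto (f,w) - (\nabla u^k_h, \nabla w)$ converge in $H^{-1}(\Omega)$ to $\lambda^k = -\lsb \nabla u^k\rsb$. This is exactly the estimate in the proof of \cref{lem:1}, but tested against a fixed smooth $w$ rather than against $v_h$.

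\textbf{Step 2 (identifying the discrete residual).} For $w$ smooth, integrate by parts elementwise and split the discrete residual into the edge jumps on $\Gamma$, the jumps on interior edges off $\Gamma$, and $(f,w)$. On $\mathcal{I}^{k-1}_h$ the discrete equation is solved, so the contributions off $\Gamma$ should be negligible against fixed test functions. Together with Step 1, this should give weak convergence $\lsb \nabla u^k_h\rsb \rightharpoonup \lsb \nabla u^k\rsb$ in a negative-order space on $\Gamma$.

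\textbf{Step 3 (upgrading to strong convergence).} I would use the regularity established above: $u^k \in W^{2,p}(\Omega_\pm)$ for $p<4/3$, with singularities only at the finitely many crack tips. Away from small balls around the tips, interior and local elliptic estimates for the discrete problem on each of $\Omega_\pm$ (in the spirit of Nitsche--Schatz) should give local $W^{1,1}$ convergence on strips adjacent to $\Gamma$. Trace inequalities on each side then give $L^1$ convergence of the one-sided normal derivatives there. Near the tips, the uniform $L^q(\Gamma)$ bound with $q>1$ (discrete analogue of $\lambda^k \in L^{q}(\Gamma)$) makes the contribution of a set of small measure small by Hölder's inequality.

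If Step 3 proves too delicate, the fallback is to weaken the target. It would suffice to show $\int_\Gamma \lsb \nabla u^k_h\rsb \chi_h \,\ds \to \int_\Gamma \lsb \nabla u^k\rsb g\,\ds$ along the subsequence. I would try to obtain this by testing with discrete layer functions of width $h$ and exploiting the sign structure. The cost of that route is that the resulting error term is $h^{-1/2}\|\nabla(u^k-u^k_h)\|_{L^2(\Omega)}$, which is only controlled if a convergence rate is available.
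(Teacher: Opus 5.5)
Your reduction is sound. With $\chi_h\to g$ weak-$*$ in $L^\infty(\Gamma)$, $0\le g\le 1_{\mathcal{A}^k}$ and $\int_\Gamma g\,\ds\ge\eta$, strong $L^1(\Gamma)$ convergence of the jumps does give the contradiction. This compactness argument is a legitimate substitute for the paper's splitting of $\mathcal{A}^k\setminus\mathcal{A}^k_{h'}$ into $\{\lsb\nabla u^k\rsb\le-\epsilon\}$ and $\{-\epsilon<\lsb\nabla u^k\rsb<0\}$; neither endgame needs anything beyond $L^1$ convergence.

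The gap is that the convergence $\lsb\nabla u^k_h\rsb\to\lsb\nabla u^k\rsb$ in $L^1(\Gamma)$ is the entire analytic content of the lemma, and you do not establish it. Steps 2 and 3 are stated as expectations (``should be negligible'', ``should give''). The uniform $L^q(\Gamma)$ bound on the \emph{discrete} jumps near the crack tips is simply assumed. And weak convergence in a negative-order space on $\Gamma$ cannot be upgraded to strong $L^1$ convergence without exactly this kind of quantitative control. Your fallback fails for the reason you suspect. The $r^{1/2}$ crack-tip singularities limit $u^k$ to $H^{3/2-\epsilon}$, so $\|\nabla(u^k-u^k_h)\|_{L^2(\Omega)}$ is at best $O(h^{1/2-\epsilon})$. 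The factor $h^{-1/2}$ from a discrete trace inequality in $L^2$ then leaves $h^{-\epsilon}$, which does not vanish.

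The missing idea is to measure everything in $L^p$ with $p$ close to $1$. There the regularity $u^k_\pm\in W^{2,p}(\Omega_\pm)$ for all $p<4/3$ buys a full power of $h$. The paper writes $\nabla(u^k-u^k_h)=\nabla(u^k-I_hu^k)+\nabla(I_hu^k-u^k_h)$ on each side of $\Gamma$ and treats the two parts separately:
\begin{itemize}
\item The interpolation part is bounded in $L^p(\Gamma)$ by a multiplicative trace inequality and standard interpolation estimates, giving $Ch^{1-1/p}|\nabla u^k|_{W^{1,p}}$.
\item For the discrete part, a discrete trace inequality costs only $h^{-1/p}$, while a $W^{1,p}$ finite element error estimate gives $\|\nabla(I_hu^k-u^k_h)\|_{L^p(\Omega)}\le Ch|\nabla u^k|_{W^{1,p}}$.
\end{itemize}
Hence the one-sided traces converge in $L^p(\Gamma)$ at rate $O(h^{1-1/p})\to0$ for $1<p<4/3$. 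This gives the $L^1(\Gamma)$ convergence of the jumps you need, and from there either your weak-$*$ argument or the paper's $\epsilon$-argument finishes. As you correctly noticed, this uses more than the stated $H^1$ hypothesis: the $W^{2,p}$ regularity and the $W^{1,p}$ error estimate are what actually carry the proof.
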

\begin{proof}
We will prove the stronger result that  $\lsb \nabla u^k_h \rsb \to \lsb \nabla u^k \rsb$ a.e.~on $\Gamma$. Recall the splitting of $\Omega$ into $\Omega_- = \{ x_1 < 1/2\}$ and $\Omega_+ = \{ x_1 > 1/2\}$. Consider the restriction $\nabla u^k_\pm$ of $\nabla u^k$ to $\Omega_\pm$. Then, for any $p<4/3$,
\begin{align*}
\| \nabla (u^k_\pm - (u^k_h)_\pm)\|_{L^p(\Gamma)} \leq \| \nabla (u^k_\pm - (I_h u^k)_\pm)\|_{L^p(\Gamma)} + \| \nabla( (I_h u^k)_\pm - u^k_\pm)\|_{L^p(\Gamma)},
\end{align*}
where $I_h$ is a suitable linear interpolant. Denote the first term on the right-hand side by (A) and the second by (B). Now, dropping the subscript $_\pm$ for clarity:
\begin{align*}
\text{(A)} &= \sum_{e \in \Gamma}  \| \nabla (u^k - I_h u^k)\|_{L^p(e)} \\
& \leq C \sum_{K \in \mathcal{T}_h} \left( h^{-1/p}  \| \nabla (u^k - I_h u^k)\|_{L^p(K)} + h^{1-1/p} | \nabla u^k |_{W^{1,p}(K)} \right)\\
& \leq C h^{1-1/p} | \nabla u^k |_{W^{1,p}(\Omega)},
\end{align*}
where the first inequality follows from a multiplicative trace inequality \cite[Rem.~12.19]{ern2021:i} and the second inequality follows by a standard interpolant approximation result \cite[Ch.~4.4]{Brenner2008}. Hence (A)$\to 0$ as $h \to 0$. Moreover,
\begin{align*}
\text{(B)} \leq C  h^{-1/p}  \| \nabla (I_h u^k - u^k_h)\|_{L^p(\Omega)} \leq  C h^{1-1/p} | \nabla u^k |_{W^{1,p}(\Omega)}
\end{align*}
where the first inequality follows via a discrete trace inequality \cite[Lem.~12.8]{ern2021:i} and the second by $W^{1,p}$-estimates  for the Poisson problem on polyhedral domains \cite[Th.~8.5.3]{Brenner2008}. Hence, (B)$\to 0$. Therefore, on either side of $\Gamma$, we have that the trace of $\nabla u^k_h$ converges strongly to $\nabla u^k$ in $L^p(\Gamma)$, and we conclude that $\lsb \nabla u^k_h \rsb$ converges pointwise a.e.~to $\lsb \nabla u^k \rsb$.

Recall that $\mathcal{A}^k = \{ \lsb \nabla u^k \rsb < 0 \; \text{on} \; \Gamma\}$ and $\mathcal{A}^k_h = \{ \lsb \nabla u^k_h \rsb < 0 \; \text{on} \; \Gamma\}$. Suppose now that $|\mathcal{A}^k \backslash \mathcal{A}^k_h| \not\to 0$ as $h\downarrow 0$. Then there exists a subsequence $\{h'\}$ with $h'\downarrow 0$ and $\eta>0$ such that $|\mathcal{A}^k\setminus\mathcal{A}^k_{h'}|\geq\eta$ for all $h'>0$.
Let $\mathcal{D}_{h'}^k \coloneqq \mathcal{A}^k\setminus\mathcal{A}^k_{h'}$, and define, for $\epsilon>0$,
$$
\mathcal{D}_{h'}^{k,\epsilon}\coloneqq \{s\in\mathcal{D}_{h'}^k:\lsb\nabla u^k\rsb(s)\leq-\epsilon\},\quad\text{and}\quad
\mathcal{E}_{h'}^{k,\epsilon}\coloneqq\{s\in\mathcal{D}_{h'}^k:-\epsilon<\lsb \nabla u^k\rsb(s)<0\}.
$$
Then, it holds that $\mathcal{D}_{h'}^{k,\epsilon}\cap \mathcal{E}_{h'}^{k,\epsilon}=\emptyset$ and $\mathcal{D}_{h'}^{k,\epsilon}\cup \mathcal{E}_{h'}^{k,\epsilon}= \mathcal{D}_{h'}^{k}$. Hence, we have that (here with $|\cdot|$ the $(d-1)$-dimensional Lebesgue measure)
\begin{equation}\label{MH.setconvergence}
0<\eta\leq|\mathcal{D}_{h'}^k|=|\mathcal{D}_{h'}^{k,\epsilon}|+|\mathcal{E}_{h'}^{k,\epsilon}|.
\end{equation}
Regarding the first term of the sum of the right hand side in \eqref{MH.setconvergence} we observe that	
\begin{equation}\label{MH.D}
|\mathcal{D}_{h'}^{k,\epsilon}|=\int_{\mathcal{D}_{h'}^{k,\epsilon}}1\,\ds\leq\epsilon^{-1}\int_{\mathcal{D}_{h'}^{k,\epsilon}}\left|\lsb\nabla u^k\rsb\right|\, \ds\leq\epsilon^{-1}\int_{\mathcal{D}_{h'}^{k,\epsilon}}\left|\lsb\nabla u^k\rsb - \lsb\nabla u_{h'}^k\rsb\right|\, \ds\to 0 
\end{equation}
as $h'\downarrow 0$. Note that we use here $\lsb\nabla u_{h'}^k\rsb\geq 0$ on $\mathcal{D}_{h'}^k$, $\mathcal{D}_{h'}^k\subseteq\Gamma$, and $\lsb\nabla u_{h'}^k\rsb\to\lsb\nabla u^k\rsb$ in $L^p(
\Gamma)$ for $1\leq p <4/3$.

Concerning ${\mathcal{E}_{h'}^{k,\epsilon}}$ we have
\begin{equation}\label{MH.D2}
{\mathcal{E}_{h'}^{k,\epsilon}}\subset {\mathcal{E}^{k,\epsilon}}\coloneqq\{s\in\Gamma:-\epsilon<\lsb\nabla u^k\rsb(s)<0\},
\end{equation}
and it holds that
${\mathcal{E}^{k,\epsilon_2}}\subseteq {\mathcal{E}^{k,\epsilon_1}}$ for $0<\epsilon_2<\epsilon_1$, and $\bigcap_{\epsilon>0}{\mathcal{E}^{k,\epsilon}}=\emptyset$. Hence, we have that
$
\lim_{\epsilon\downarrow 0}|\mathcal{E}^{k,\epsilon}|=0.
$
In other words, for $\eta>0$ there exists $\epsilon(\eta)>0$ such that
\begin{equation}\label{MH.D4}
|\mathcal{E}^{k,\epsilon}|\leq\frac{\eta}{4},\quad\text{and hence}\quad |\mathcal{E}^{k,\epsilon}_{h'}|\leq\frac{\eta}{4},\quad\text{for all }0<\epsilon < \epsilon(\eta)\text{ and }h'>0.
\end{equation}
Let $\epsilon\in(0,\epsilon(\eta))$ be fixed. Then, due to \eqref{MH.D}, there exists an $h'(\eta,\epsilon)>0$ such that
\begin{equation}\label{MH.D5}
|\mathcal{D}^{k,\epsilon}_{h'}|\leq \frac{\eta}{4},\quad\text{for all }0<h'<h'(\eta,\epsilon).
\end{equation}
When combining \eqref{MH.D4} and \eqref{MH.D5}, then \eqref{MH.setconvergence} yields
$$
0<\eta\leq|\mathcal{D}_{h'}^{k}|\leq\frac{\eta}{2},\quad\text{for all}\quad 0<\epsilon<\epsilon(\eta)\text{ and } 0<h'<h'(\eta,\epsilon),
$$
which is a contradiction. Hence, we must have that
$
\lim_{h\downarrow 0}|\mathcal{D}_{h}^k|=\lim_{h\downarrow 0}|\mathcal{A}^k\setminus\mathcal{A}_h^k|=0,
$
which ends the proof.
\end{proof}

\begin{proposition}
\label{prop:3}
Suppose that $u^k_h \to u^k$ strongly in $H^1(\Omega)$ as $h \to 0$. Then, $\| \lambda^k_h\|_{U^*_h(\mathcal{I}^k_h)} \to \| \lambda^k \|_{H^{-1}(\mathcal{I}^k)}$.
\end{proposition}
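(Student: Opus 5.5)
By \cref{lem:1} and the assumed strong $H^1$ convergence, it suffices to show that $\|\lambda^k\|_{U^*_h(\mathcal{I}^k_h)} \to \|\lambda^k\|_{H^{-1}(\mathcal{I}^k)}$ as $h \to 0$. Here the functional is the fixed, $h$-independent $\lambda^k = -\lsb \nabla u^k \rsb \in L^q(\Gamma)$, supported on $\Gamma$. The plan is to prove a $\limsup$ inequality and a $\liminf$ inequality separately, each by comparing the test spaces $U_h(\mathcal{I}^k_h)$ and $H^1_0(\mathcal{I}^k)$.

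\textbf{Upper bound.} Take $v_h \in U_h(\mathcal{I}^k_h)$. Then $\langle \lambda^k, v_h\rangle = \int_\Gamma \lambda^k v_h \,\ds$, and $v_h = 0$ on $\mathcal{A}^k_h \cap \Gamma$. We split $\Gamma \cap \mathcal{I}^k_h$ into $\Gamma\cap\mathcal{I}^k$ and $\mathcal{A}^k \setminus \mathcal{A}^k_h$. On the first set, we would like to replace $v_h$ by a function in $H^1_0(\mathcal{I}^k)$. Since $\mathcal{I}^k = \Omega\setminus\mathcal{A}^k$ with $\mathcal{A}^k\subset\Gamma$, the term $\int_{\Gamma\cap\mathcal{I}^k}\lambda^k v_h$ can be written as $\int_\Gamma \lambda^k \chi_{\mathcal{I}^k} v_h$. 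Note that $\lambda^k = 0$ on $\mathcal{I}^{k-1}$. So on $\Gamma\cap\mathcal{I}^k$ the functional only sees $\mathcal{A}^{k-1}\cap\mathcal{I}^k$. On the second set we bound
\begin{align*}
\Big|\int_{\mathcal{A}^k\setminus\mathcal{A}^k_h}\lambda^k v_h\,\ds\Big| \le \|\lambda^k\|_{L^{q}(\mathcal{A}^k\setminus\mathcal{A}^k_h)}\,\|v_h\|_{L^{q'}(\Gamma)} \le C\|\lambda^k\|_{L^{q}(\mathcal{A}^k\setminus\mathcal{A}^k_h)}\|\nabla v_h\|_{L^2(\Omega)}.
\end{align*}
This uses the trace embedding $H^1\to L^{r}(\Gamma)$ for all $r<\infty$ in 2D. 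By \cref{lem:2} and absolute continuity of the integral of $|\lambda^k|^q$, this term is $o(1)$ uniformly in $v_h$.

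The delicate point is that $v_h$ itself need not vanish on $\mathcal{A}^k$ (and the part of the pairing over $\Gamma\cap\mathcal{I}^k$) in the right way to lie in $H^1_0(\mathcal{I}^k)$. I would handle this by the following characterisation: $\|\lambda^k\|_{H^{-1}(\mathcal{I}^k)} = \|\nabla w\|_{L^2}$, where $w$ solves the Dirichlet problem on $\mathcal{I}^k$. Then $\langle\lambda^k,v\rangle$ restricted to test functions vanishing on $\mathcal{A}^k$ is computed by pairing against $w$, and the pairing over $\Gamma\cap\mathcal{I}^k$ is continuous under the relevant convergence. Concretely, I expect to decompose $v_h = \tilde v + r$ with $\tilde v \in H^1_0(\mathcal{I}^k)$, $\|\nabla\tilde v\|\le\|\nabla v_h\|$ (obtained by $H^1$-projection onto $H^1_0(\mathcal{I}^k)$), and $r$ the remainder. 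Then $r$ is harmonic in $\mathcal{I}^k$, with boundary values supported on $\mathcal{A}^k\setminus\mathcal{A}^k_h$, so that $\langle \lambda^k, r\rangle$ is controlled by the small-set estimate above.

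\textbf{Lower bound.} Given $\varepsilon>0$, pick $v\in C_c^\infty(\mathcal{I}^k)$ (dense in $H^1_0(\mathcal{I}^k)$) nearly attaining the supremum. Its support stays a positive distance from $\mathcal{A}^k$ except possibly near the set where it already vanishes. We then need $v_h\in U_h(\mathcal{I}^k_h)$ with $v_h\to v$ in $H^1$. To get this, take $I_h v$ and set to zero the nodal values on $\mathcal{A}^k_h\cap\Gamma$. The problematic nodes are those in $\mathcal{A}^k_h\setminus\mathcal{A}^k$, where $v\ne0$ is possible. This requires the reverse convergence $|\mathcal{A}^k_h\setminus\mathcal{A}^k|\to0$, which I would prove exactly as in \cref{lem:2} using the a.e. convergence of jumps, together with $|\{\lsb\nabla u^k\rsb=0\}\cap\mathcal{A}^{k-1}|$ being handled by the strict inequality defining the sets.

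The main obstacle is that zeroing nodes on a small-measure set does not give small $H^1$ perturbation: a boundary-layer cutoff costs $O(1)$ energy per node. I would instead use that $v$ is smooth, pair only via $\int_\Gamma\lambda^k v_h$, and estimate the change in the pairing (not the norm) by $\|\lambda^k\|_{L^q(\mathcal{A}^k_h\setminus\mathcal{A}^k)}\|v\|_{L^\infty}$. Controlling the accompanying growth of $\|\nabla v_h\|$ is the step I expect to be hardest. I would try a capacity argument showing that the capacity of $\mathcal{A}^k_h\triangle\mathcal{A}^k$, not just its measure, tends to zero.
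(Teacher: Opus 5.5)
Your reduction via \cref{lem:1} and the $\limsup$/$\liminf$ split mirror the paper's argument. The paper takes Riesz representers $w\in H^1_0(\mathcal{I}^k)$, $w_h\in U_h(\mathcal{I}^k_h)$ and applies Strang's lemma; its consistency term is your upper bound and its approximation term is your lower bound.

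In the upper bound, though, your claim that $\langle\lambda^k,r\rangle$ is controlled by the H\"older estimate on $\mathcal{A}^k\setminus\mathcal{A}^k_h$ is unjustified. The remainder $r=v_h-\tilde v$ is harmonic in $\mathcal{I}^k$ and in general nonzero on $\Gamma\cap\mathcal{I}^k$. That is exactly where $\lambda^k$ is strictly negative on a set of positive measure (\cref{prop:4}). Data supported on a small set therefore do not make $\int_\Gamma\lambda^k r\,\ds$ a small-set integral. It becomes one only after integrating by parts against $w$. Since $w$ is harmonic on both sides of $\Gamma$ with $\lsb\nabla w\rsb=\lambda^k$ on $\Gamma\setminus\overline{\mathcal{A}^k}$, every $y\in H^1_0(\Omega)$ vanishing on $\mathcal{A}^k_h$ satisfies
\[
\langle\lambda^k,y\rangle-(\nabla w,\nabla y)=\int_{\mathcal{A}^k\setminus\mathcal{A}^k_h}\bigl(\lambda^k-\lsb\nabla w\rsb\bigr)\,y\,\ds .
\]
The right-hand side is uniformly small only because $\lsb\nabla w\rsb\in L^q(\Gamma)$. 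This is the crack-tip $W^{2,p}$ regularity of $w$, which the paper uses and your plan never invokes. Taking $y=v_h$ gives the $\limsup$ directly, with no projection needed. A regularity-free alternative is compactness. Normalized near-maximizers $v_h$ have weak limits $v$ whose traces vanish a.e.\ on $\mathcal{A}^k$, because $|\mathcal{A}^k\setminus\mathcal{A}^k_h|\to0$ and the trace is compact into $L^{q'}(\Gamma)$. Hence (for regular $\mathcal{A}^k$) $v\in H^1_0(\mathcal{I}^k)$ and $\langle\lambda^k,v_h\rangle\to\langle\lambda^k,v\rangle$.

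The genuine gap is the $\liminf$. It is also the direction behind the uniform-in-$h$ lower bound claimed after \cref{th:convergence-rate-inf}. It corresponds to the paper's approximation term $\inf_{v_h\in U_h(\mathcal{I}^k_h)}\|\nabla(w-v_h)\|_{L^2(\Omega)}\to0$, which the paper dispatches in one sentence by appeal to \cref{lem:2}. You rightly note two things. First, this needs control of $\mathcal{A}^k_h\setminus\mathcal{A}^k$, not of the set $\mathcal{A}^k\setminus\mathcal{A}^k_h$ handled by \cref{lem:2}. Second, measure is the wrong yardstick: a comb of active nodes at spacing $O(h)$ along a segment has zero length, yet its weak limits vanish on the whole segment. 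What is required is the strong-approximation half of Mosco convergence of $U_h(\mathcal{I}^k_h)$ to $H^1_0(\mathcal{I}^k)$. One route is a one-sided Hausdorff bound $\sup_{x\in\mathcal{A}^k_h}\operatorname{dist}(x,\mathcal{A}^k)\to0$; then $I_hv\in U_h(\mathcal{I}^k_h)$ for each $v\in C^\infty_c(\mathcal{I}^k)$ once $h$ is small. Another is a capacity estimate. Your proposal leaves this open.

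Your route to even the measure version also fails. Transposing the $\epsilon$-splitting of \cref{lem:2} leaves the sets $\{0\le\lsb\nabla u^k\rsb<\epsilon\}$, which shrink to the zero set of $\lsb\nabla u^k\rsb$ rather than to $\emptyset$. On its biactive part in $\mathcal{A}^{k-1}$ (where $u^k=\varphi$ and $\lambda^k=0$), the strict inequality only places points in $\mathcal{I}^k$. It does not stop small positive discrete multipliers from putting nearby nodes into $\mathcal{A}^k_h$. As written, you therefore establish (after the fix above) only $\limsup_{h\to0}\|\lambda^k_h\|_{U^*_h(\mathcal{I}^k_h)}\le\|\lambda^k\|_{H^{-1}(\mathcal{I}^k)}$. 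The reverse inequality needs a further argument or hypothesis on the discrete active sets.
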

\begin{proof}
From \cref{lem:1}, we may equivalently show that $\| \lambda^k\|_{U^*_h(\mathcal{I}^k_h)} \to \| \lambda^k \|_{H^{-1}(\mathcal{I}^k)}$ as $h \to 0$. Let $w \in H^1_0(\mathcal{I}^k)$ and $w_h \in U_h(\mathcal{I}^k_h)$ denote the solutions to the variational problems:
\begin{align*}
(\nabla w,\nabla v) = \langle \lambda^k, v \rangle \; \forall v \in H^1_0(\mathcal{I}^k) \quad \text{and} \quad
(\nabla w_h,\nabla v_h) = \langle \lambda^k, v_h \rangle \; \forall  v_h \in U_h(\mathcal{I}^k_h).
\end{align*}
We may extend any function $v \in U_h(\mathcal{I}^k_h)$ or $v \in H^1_0(\mathcal{I}^k)$ by zero to all of $\Omega$. Thus $U_h(\mathcal{I}^k_h),  H^1_0(\mathcal{I}^k) \subset H^1_0(\Omega)$ but $U_h(\mathcal{I}^k_h) \not\subset H^1_0(\mathcal{I}^k)$. This is a typical situation when committing a variational crime \cite[Ch.~10]{Brenner2008}. Applying Strang's lemma \cite[Lem.~10.1.7]{Brenner2008}:
\begin{align}
\begin{split}
\label{eq:strang1}
&\| \nabla(w - w_h)\|_{L^2(\Omega)} \\
&\indent \leq \inf_{v_h \in U_h(\mathcal{I}^k_h)} \|\nabla(w - v_h)\|_{L^2(\Omega)} + \sup_{y_h \in U_h(\mathcal{I}^k_h)} \frac{|(\nabla (w - w_h), \nabla y_h)|}{\|\nabla y_h\|_{L^2(\Omega)}}.
\end{split}
\end{align}
Now, $\mathcal{I}^k$ and $\mathcal{I}^k_h$ only differ from $\Omega$ on $\Gamma$ and $y_h = 0$ on $\mathcal{A}^k_h$. Thus
\begin{align*}
(\nabla (w - w_h), \nabla y_h) = \int_\Omega ((-\Delta) w) y_h - f y_h\, \dx + \int_{\mathcal{A}^k \backslash \mathcal{A}^k_h} \lsb \nabla w \rsb y_h \ds.
\end{align*}
The first term equals zero and the second term satisfies, for a constant $C>0$, $5/4 < q < 8/5$, $15/13 < p < 4/3$,
\begin{align*}
\left| \int_{\mathcal{A}^k \backslash \mathcal{A}^k_h} \lsb \nabla w \rsb y_h \ds \right| 
&\leq \| \lsb \nabla w \rsb \|_{L^q(\Gamma)}   \| y_h \|_{L^{\frac{q}{q-1}}(\mathcal{A}^k \backslash \mathcal{A}^k_h)}\\
& \leq  C\|w \|_{W^{2,p}(\Omega)}   \| y_h \|_{L^4(\Gamma)} \cdot  |\mathcal{A}^k \backslash \mathcal{A}^k_h|^{\frac{4q-5}{4q}}\\
& \leq  C\|w \|_{W^{2,p}(\Omega)}   \|  \nabla y_h \|_{L^2(\Omega)} \cdot  |\mathcal{A}^k \backslash \mathcal{A}^k_h|^{\frac{4q-5}{4q}}.
\end{align*}
An application of H\"older's inequality gives the first inequality. The second inequality follows by leveraging the trace estimates discussed earlier in this section for the first term. H\"older's inequality is again used for the second term. Note that $\| w \|_{W^{2,p}(\Omega)}$ is finite for $p<4/3$ as discussed above. Applying a Sobolev inequality $\| y_h \|_{L^4(\Gamma)} \leq C \| y_h \|_{H^{1/2}(\Gamma)}$ followed by a trace estimate and then the Poincar\'e inequality gives us the third inequality.

Hence, since $| \mathcal{A}^k \backslash \mathcal{A}^k_h| \to 0$ as $h\to 0$ (\cref{lem:2}), we have that the second term in \cref{eq:strang1} converges to zero. Again, by applying \cref{lem:2}, one shows the first term also converges to zero and therefore, $\| \nabla w_h \|_{L^2(\Omega)} \to \| \nabla w\|_{L^2(\Omega)}$. Moreover, by the Riesz representation theorem, $\| \nabla w \|_{L^2(\Omega)} = \| \lambda^k \|_{H^{-1}(\mathcal{I}^k)}$ and $\| \nabla w_h \|_{L^2(\Omega)} = \| \lambda^k\|_{U^*_h(\mathcal{I}^k_h)}$. The result follows.
\end{proof}
\cref{prop:3} tells us that the convergence of HIK as $h \to 0$ is determined by the behaviour of $\lambda^k$ in $\mathcal{I}^k$. 

We now prove the following global infinite-dimensional convergence theorem.
\begin{theorem}
\label{th:convergence-rate-inf}
Consider an HIK iterate $k \geq 1$ for \cref{eq:setup:scalar} such that $\mathcal{A}^* \subseteq \mathcal{A}^k$ and, consequently, $\mathcal{I}^k \subseteq \mathcal{I}^*$. Moreover, suppose that $(u^k,\lambda^k) \neq (u^*,\lambda^*)$ and $u^k \leq \varphi$ a.e.~on $\Gamma$. Then
\begin{align}
\| \nabla (u^{k+1} - u^*) \|^2_{L^2(\Omega)} - \| \nabla( u^{k} - u^*) \|^2_{L^2(\Omega)} + \| {\lambda}^k \|^2_{H^{-1}(\mathcal{I}^k)} =0.
\label{eq:contraction:inf:1}
\end{align}
Therefore, HIK converges at the following guaranteed sublinear rate:
\begin{align}
\frac{\| \nabla (u^{k+1} - u^*) \|_{L^2(\Omega)} }{{\| \nabla (u^{k} - u^*) \|_{L^2(\Omega)} }}
=
\frac{\| {\lambda}^{k+1} - {\lambda}^* \|_{H^{-1}(\Omega)}}{{\| {\lambda}^{k} - {\lambda}^* \|_{H^{-1}(\Omega)}}} 
= \left(1-\frac{\| {\lambda}^k \|^2_{H^{-1}(\mathcal{I}^k)}}{\| {\lambda}^{k} - {\lambda}^* \|_{H^{-1}(\Omega)}^2}\right)^{1/2}.
\label{eq:contraction:inf:2}
\end{align}
\end{theorem}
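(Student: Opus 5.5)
The plan is to mimic the finite-dimensional proof of \cref{th:convergence-rate} at the level of Hilbert spaces. I avoid block matrices and write everything through orthogonal projections in $H^1_0(\Omega)$ with inner product $(\nabla\cdot,\nabla\cdot)$. Set $e^k \coloneqq u^k - u^*$.

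\textbf{Step 1: characterize $e^{k+1}$.} By \cref{eq:inf:2}, $u^{k+1}=\varphi$ on $\mathcal{A}^k$. Since $\mathcal{A}^*\subseteq\mathcal{A}^k$ and $u^*=\varphi$ on $\mathcal{A}^*$, I want $e^{k+1}|_{\mathcal{A}^k} = (\varphi-u^*)|_{\mathcal{A}^k}$. This is the analogue of \cref{lem:linear}\labelcref{lem:linear:iv}: the argument of \cref{prop:4} (using $u^k\le\varphi$) gives $u^k=\varphi$ on $\mathcal{A}^k$, hence $e^{k+1}|_{\mathcal{A}^k} = e^k|_{\mathcal{A}^k}$.

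Next, test with $v\in H^1_0(\mathcal{I}^k)$. Because $\mathcal{I}^k\subseteq\mathcal{I}^*$, the multiplier $\lambda^*$ vanishes against such $v$. Subtracting the equation for $u^*$ from that for $u^{k+1}$ then gives $(\nabla e^{k+1},\nabla v)=0$ for all $v\in H^1_0(\mathcal{I}^k)$. So $e^{k+1}$ is the minimal-energy function agreeing with $e^k$ on $\mathcal{A}^k$. Writing $V\coloneqq H^1_0(\mathcal{I}^k)$, a closed subspace of $H^1_0(\Omega)$ via extension by zero, this means $e^{k+1} = e^k - P_V e^k$, where $P_V$ is the orthogonal projection. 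The point to check is that $e^k - e^{k+1}$ vanishes on $\mathcal{A}^k$ and therefore lies in $V$. Because $\mathcal{A}^k\subset\Gamma$ is one-dimensional, ``vanishing on $\mathcal{A}^k$'' must be read in the trace sense. That $H^1$ functions with zero trace on the crack $\mathcal{A}^k$ coincide with $H^1_0(\Omega\setminus\overline{\mathcal{A}^k})$ I would take as the definition of $H^1_0(\mathcal{I}^k)$ used in \cref{eq:inf:2}.

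\textbf{Step 2: Pythagoras.} Orthogonality gives $\|\nabla e^{k+1}\|^2 = \|\nabla e^k\|^2 - \|\nabla P_V e^k\|^2$. It remains to identify $\|\nabla P_V e^k\| = \|\lambda^k\|_{H^{-1}(\mathcal{I}^k)}$. Since $(-\Delta)u^k+\lambda^k=f=(-\Delta)u^*+\lambda^*$, we have $(\nabla e^k,\nabla v) = -\langle\lambda^k-\lambda^*,v\rangle = -\langle\lambda^k,v\rangle$ for $v\in V$, again using that $\lambda^*$ is supported in $\mathcal{A}^*\subseteq\mathcal{A}^k$. The Riesz representation of $\lambda^k$ restricted to $V$ is therefore $-P_V e^k$, and its norm equals $\|\lambda^k\|_{H^{-1}(\mathcal{I}^k)}$ as defined after \cref{prop:4}. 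This proves \cref{eq:contraction:inf:1}.

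\textbf{Step 3: the rate.} The identity $\|\nabla e^k\|=\|\lambda^k-\lambda^*\|_{H^{-1}(\Omega)}$ is the infinite-dimensional analogue of \cref{lem:linear}\labelcref{lem:linear:iii}, with the same proof: $(-\Delta)e^k = -(\lambda^k-\lambda^*)$ and $(-\Delta)$ is the Riesz isometry. It holds for both $k$ and $k+1$. Dividing \cref{eq:contraction:inf:1} by $\|\nabla e^k\|^2\neq0$ and taking square roots yields \cref{eq:contraction:inf:2}. Strict contraction, the ``guaranteed'' part, follows from \cref{prop:4} and the remark after it, which give $\|\lambda^k\|_{H^{-1}(\mathcal{I}^k)}\ge c^k>0$.

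The main obstacle is Step 1, specifically justifying $\lambda^*|_V=0$ and $e^k-e^{k+1}\in V$. Both depend on the crack-domain space $H^1_0(\mathcal{I}^k)$ matching functions with vanishing trace on $\mathcal{A}^k$ and on $\supp\lambda^*\subseteq\overline{\mathcal{A}^*}$. I would handle this using $\lambda^*\in L^q(\Gamma)$ with $\lambda^*=0$ a.e. on $\Gamma\setminus\mathcal{A}^*$, so that $\langle\lambda^*,v\rangle=\int_{\mathcal{A}^*}\lambda^* v\,\ds=0$ whenever $v$ has zero trace on $\mathcal{A}^k$.
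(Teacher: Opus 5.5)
Your proof is correct, and it takes a genuinely different route from the paper's.

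The paper never argues directly in function space. It takes the discrete identity \cref{eq:convergence-recurrence} of \cref{th:convergence-rate} on each mesh and passes to the limit $h\to 0$. For this it uses the convergence $u^*_h\to u^*$ and an induction hypothesis $u^k_h\to u^k$ in $H^1(\Omega)$. It then invokes \cref{lem:2} and \cref{prop:3} to obtain $\|\lambda^k_h\|_{U_h^*(\mathcal{I}^k_h)}\to\|\lambda^k\|_{H^{-1}(\mathcal{I}^k)}$, and a Strang-lemma argument for $u^{k+1}_h\to u^{k+1}$.

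You instead prove \cref{eq:contraction:inf:1} intrinsically, through the orthogonal splitting $e^k=e^{k+1}+P_Ve^k$ in $H^1_0(\Omega)$ with $V=H^1_0(\mathcal{I}^k)$. This is exactly the coordinate-free content of the Schur-complement identity $\|\vectt{u}^{k+1}-\vectt{u}^*\|_A^2=\|(\vectt{u}^k-\vectt{u}^*)_{\mathcal{A}^k}\|_{Q^k}^2$ in the finite-dimensional proof.

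What your route buys:
\begin{itemize}
\item It is shorter and needs no finite element convergence theory.
\item It sidesteps a point the limit argument leaves implicit. Applying \cref{th:convergence-rate} on each mesh requires the discrete hypotheses $\mathcal{A}^*_h\subseteq\mathcal{A}^k_h$ and $\vectt{u}^k_h\le\vectt{\varphi}$. These must be supplied separately, e.g.\ via the $M$-matrix property in \cref{rem_hik}; they do not follow from the continuous assumptions.
\item The bound $\|\lambda^k\|_{H^{-1}(\mathcal{I}^k)}\le\|\lambda^k-\lambda^*\|_{H^{-1}(\Omega)}$ comes for free, since the left-hand side is the norm of a projection.
\end{itemize}

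What the paper's route buys: as a by-product, for each fixed $k$, the discrete iterates $u^k_h$ and the discrete contraction factors $\rho(\vectt{\lambda}^k_h)$ converge to their continuous counterparts. That link to mesh dependence is what motivates the theorem. To recover it from your proof you would still need \cref{prop:3} together with the inductive convergence $u^k_h\to u^k$.

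The two technical inputs you flag are not an extra burden relative to the paper, because the paper relies on them too. The first is identifying $H^1_0(\mathcal{I}^k)$ with the functions in $H^1_0(\Omega)$ whose trace vanishes on $\mathcal{A}^k$; the paper uses this implicitly in \cref{eq:inf:2}. The second is that $\lambda^*$ is a function on $\Gamma$ vanishing a.e.\ off $\mathcal{A}^*$; this is already presupposed by the hypothesis $\mathcal{A}^*\subseteq\mathcal{A}^k$.
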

\begin{proof}
Note that $u^*_h \to u^*$ strongly in $H^1(\Omega)$ \cite[Th.~2]{falk1974}. Via a proof-by-induction argument, suppose that $u^k_h \to u^k$ strongly in $H^1(\Omega)$. Then,
\begin{align*}
\| \vectt{u}_h^{k} - \vectt{u}_h^* \|_A  = \| \nabla (u^{k}_h - u^*_h) \|_{L^2(\Omega)} \to \| \nabla (u^{k} - u^*) \|_{L^2(\Omega)}.
\end{align*}
By \cref{lem:2,prop:3}, it follows that $\| (\vectt{\lambda}^k_h)_{\mathcal{I}^k} \|_{(A_{\mathcal{I}^k,\mathcal{I}^k})^{-1}} = \| \lambda^k_h\|_{U^*_h(\mathcal{I}^k_h)} \to \| \lambda^k \|_{H^{-1}(\mathcal{I}^k)}$ as $h \to 0$. A similar argument,  via Strang's lemma, as in the proof of \cref{prop:3} reveals that $\| \nabla u^{k+1}_h \|_{L^2(\Omega)} \to \| \nabla u^{k+1}\|_{L^2(\Omega)}$ and thus $\| \vectt{u}_h^{k} - \vectt{u}_h^* \|_A \to \| \nabla (u^{k+1} - u^*) \|_{L^2(\Omega)}$. Applying these limits to \cref{eq:convergence-recurrence} in \cref{th:convergence-rate} leads us to \cref{eq:contraction:inf:1}.

At $k=1$, $u^1$ is the solution of the Poisson problem on a convex domain and thus $u^1_h \to u^1$ strongly in $H^1(\Omega)$ by a standard FEM convergence result. Hence, by induction, \cref{eq:contraction:inf:1} holds for all $k \geq 1$.

Dividing \cref{eq:contraction:inf:1} by $\|\nabla(u^k-u^*)\|_{L^2(\Omega)}$, noting $\|\nabla(u^k-u^*)\|_{L^2(\Omega)} = \| {\lambda}^{k+1} - {\lambda}^* \|_{H^{-1}(\Omega)}$, and rearranging the equation gives \cref{eq:contraction:inf:2}. \cref{prop:4} guarantees that $\| \lambda^k\|_{H^{-1}(\mathcal{I}^k)}> 0$. Moreover,  $\| \lambda^k\|_{H^{-1}(\mathcal{I}^k)} < \| \lambda^k- \lambda^*\|_{H^{-1}(\Omega)}$. Together this ensures sublinear convergence.
\end{proof}
Unless a solution has been found in iteration $k$, \cref{th:convergence-rate-inf} tells us that for sufficiently small $h>0$, $\| [\vectt{\lambda}^k_h]_{\mathcal{I}^k_h} \|_{(A_{\mathcal{I}^k_h,\mathcal{I}^k_h})^{-1}}$ is uniformly (in $h$) bounded below by a positive constant. A uniform bound is the first step to proving mesh-independent convergence.

\begin{remark}
\cref{fig:convergence} suggests that the convergence is in fact linear. This would imply that $\| \lambda^k\|_{H^{-1}(\mathcal{I}^k)} / \| \lambda^k- \lambda^*\|_{H^{-1}(\Omega)} \geq c > 0$ for some constant $c\in (0,1)$ independent of $k$. Proving such a result remains an open problem.
\end{remark}

\begin{figure}[t]
\centering
\includegraphics[width = \textwidth]{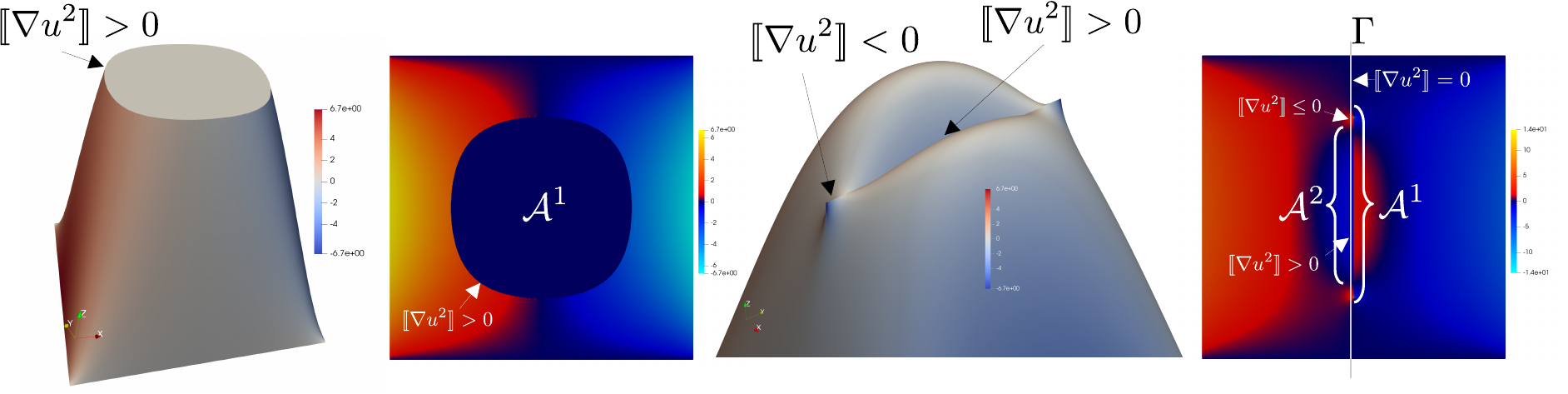}
\caption{A contour and surface plot of the second HIK iterate $u^2$ in the obstacle problem of \cref{prob:obstacle} (left and left middle) and the thin obstacle problem of \cref{prob:thin} (right middle and right), respectively. The colormaps are associated with the value of $\lsb u^2 \rsb$. The active set $\mathcal{A}^1$ and the sign of the jump $\lsb \nabla u^2 \rsb$ are labelled.}
\label{fig:jumps}
\end{figure}

\subsection{HIK for \cref{prob:obstacle}}
In this subsection, we show that the HIK solver diverges after the second iteration when applied to the obstacle problem of \cref{prob:obstacle}.

\subsubsection*{Divergence}
The first HIK iterate is well-posed since $u^0 = \lambda^0 = 0$, $\mathcal{A}^0 = \emptyset$ and thus $u^1 = (-\Delta)^{-1} f$ in $\Omega$ with $u^1 = 0$ on $\partial \Omega$ and $\lambda^1 = 0$. Next $\mathcal{A}^1 = \{ \lambda^1 + u^1 - \varphi > 0\} =  \{u^1 > \varphi\}$ and $\mathcal{I}^1 = \op{int}(\Omega \backslash \mathcal{A}^1)$. As before, elliptic regularity ensures that $u^1$ is smooth in the interior of $\Omega$ \cite[Ch.~6, Th.~3]{Evans2010}. Moreover, the boundary $\partial \mathcal{A}^1$ is the level set $\{u^1=1\}$ such that $\nabla u^1 \neq 0$ on $\partial \mathcal{A}^1$. Thus $\mathcal{A}^1$, is an open connected subdomain of $\Omega$ with a smooth boundary and $\mathcal{I}^1$ is an open, connected bounded Lipschitz domain.
A calculation reveals that
\begin{align}
u^2(x) = \begin{cases}
1 & \text{if} \; x \in \mathcal{A}^1,\\
u^1(x) & \text{otherwise}.
\end{cases}
\end{align}
Since $u^2$ is piecewise smooth in $\Omega \backslash \mathcal{A}^1$ and $\mathcal{A}^1$, the jump in the normal direction $\lsb \nabla u^2 \rsb$ across $\partial \mathcal{A}^1$ is well-defined. In fact $\lsb \nabla u^2 \rsb$ is uniformly bounded on $\partial \mathcal{A}^1$ and positive. We visualize $u^2$ and $\lsb \nabla u^2 \rsb$ in \cref{fig:jumps}.  Now, for all $v \in H^1_0(\Omega)$, an integration by parts reveals that
\begin{align}
\begin{split}
\langle \lambda^2, v\rangle
&= \int_{\mathcal{A}^1} f v \, \dx - \int_{\partial \mathcal{A}^1} \lsb \nabla u^2  \rsb v \, \ds.
\end{split}
\label{eq:lambda:1}
\end{align}
The Lagrange multiplier iterate $\lambda^2$ is supported on $\overline{\mathcal{A}^1}$ and can be rewritten as $\lambda^2 = f|_{\mathcal{A}^1} - \lsb \nabla u^2 \rsb \delta_{\partial \mathcal{A}^1}$ where $\delta_{\partial \mathcal{A}^1}$ is the Dirac delta measure of codimension one. Hence, in stark contrast to the thin obstacle problem, it is here that a problem arises. Indeed, $\lambda^2$ cannot be interpreted pointwise a.e.~on $\Omega$ and thus the set $\mathcal{A}^2 = \{ \lambda^2 + u^2 - \varphi > 0 \; \text{a.e.~in} \; \Omega\}$ is ill-defined. Dropping the singular component, and defining $\mathcal{A}^2 = \{  f|_{\mathcal{A}^1} + u^2 - \varphi > 0 \; \text{a.e.~in} \; \Omega\}$, results in $\mathcal{A}^2 = \mathcal{A}^1$. Changes in the active set are driven by regions where the Lagrange multiplier iterate is negative. The ``negative" component of $\lambda^2$ is confined to a set of measure zero (yet of positive capacity) resulting in no measurable change (in the sense of the full Lebesgue measure). We conclude that it is unclear how to construct a well-posed HIK iterate, when $k \geq 2$, for the infinite-dimensional obstacle problem.

\subsubsection*{Deterioration as $h \to 0$}

From \cref{th:convergence-rate}, the behaviour of HIK at the second iterate as $h \to 0$ is determined by $\| \lambda^2_h \|_{U^*_h(\mathcal{I}^2_h)}$. \cref{th:peeling} tells us that $\mathcal{I}^2_h$ can only be a perturbation of $h$ away from $\mathcal{I}^1_h$. Hence, assuming that $\mathcal{I}^1_h$ converges to $\mathcal{I}^1$ in a suitable sense, we may expect $\mathcal{I}^2_h$ to also converge to $\mathcal{I}^1$. 

Examining the infinite-dimensional Lagrange multiplier iterate:
\begin{align*}
\| \lambda^2 \|_{H^{-1}(\mathcal{I}^1)} = \sup_{v \in H^1_0(\mathcal{I}^1)} \frac{\left|\int_{\mathcal{A}^1} f v \, \dx - \int_{\partial \mathcal{A}^1 \cap \partial \mathcal{I}^1}	\lsb \nabla u^2 \rsb v \, \ds \right|}{\| \nabla v \|_{L^2(\Omega)}} = 0,
\end{align*}
where $v$ is extended by zero from $\mathcal{I}^1$ to $\Omega$. The second equality follows since $v = 0$ in $\mathcal{A}^1$ and its trace is zero on $\partial \mathcal{I}^1$.  Assuming that $\| \lambda^2_h \|_{U^*_h(\mathcal{I}^2_h)} \to \| \lambda^2 \|_{H^{-1}(\mathcal{I}^1)} = 0$ we conclude that the convergence rate $\rho$ in \cref{th:convergence-rate} deteriorates to one.

From \cref{lem:1}, $\| \lambda^2_h \|_{U^*_h(\mathcal{I}^2_h)}$ is bounded by $\| \nabla (u^2-u^2_h) \|_{L^2(\Omega)}$ and $\| \lambda^2\|_{U^*_h(\mathcal{I}^2_h)}$. Note that $u^2$ has a jump in its gradient across $\partial \mathcal{A}^1$ and thus its regularity is capped, $u^2 \in H^s(\Omega)$ with $s < 3/2$ (since a function $v$ cannot jump if $v \in H^{1/2}(\Omega)$). Assuming that the decay of $\| \nabla (u^2-u^2_h) \|_{L^2(\Omega)}$ is the bottleneck, then we conjecture that $\| \lambda^2_h \|_{U^*_h(\mathcal{I}^2_h)} \to 0$  at a rate of $O(h^{1/2-\epsilon})$ for any $\epsilon > 0$. This conjecture is supported by the convergence plot in \cref{fig:convergence}.

\section{Conclusions}
\label{sec:conclusions}

Primal-dual active set strategies, co-developed with semismooth Newton methods, are one of the cornerstones to fast local convergence for problems with pointwise inequality constraints. Yet, despite their close relationship with nonsmooth Newton methods, their validity on the infinite-dimensional level is extraordinarily subtle.
As demonstrated in the numerical experiments of \Cref{sec:experiments} (\cref{tab:mesh-dependence}) the convergence behaviour degrades when applied to discretized obstacle and Signorini problems as the mesh size $h$ decreases. In turn this leads to growing mesh-dependent iteration counts. For obstacle-type problems, this growth is exponential, asymptotically doubling with each uniform refinement. In contrast the growth appears linear for Signorini-type problems.

In \Cref{sec:peeling}, we prove that, for the obstacle problem, degrees of freedom can only peel away (or deactivate) from the obstacle one layer of dofs at a time leading to exponential iteration count growth. Strikingly, peeling does not occur for Signorini problems because the active set is a codimension one object that always neighbours inactive degrees of freedom.

Along the way to the infinite-dimensional perspective in \Cref{sec:ssn}, we showed, in \cref{th:convergence-rate}, that the convergence rate of the HIK primal-dual active set strategy is dictated by the dual feasibility violation of the current Lagrange multiplier iterate in the interior of the inactive set. For obstacle-type problems, the Lagrange multiplier iterate vanishes in this region as $h\to 0$. Hence the convergence rate deteriorates to one and the HIK solver diverges at the second iterate. This occurs since the infinite-dimensional Lagrange multiplier iterate has no pointwise a.e.~interpretation and the next active set is ill-defined. Whereas for Signorini problems and obstacle problems with a codimension-one obstacle, we showed that the Lagrange multiplier iterate is more regular. This ensures subsequent active sets are well-defined and the solver does not stagnate even on the infinite-dimensional level. Subsequently in \cref{th:convergence-rate-inf}, we prove that HIK converges at a sublinear rate. However, its equivalence to a semismooth Newton method is lost and the region of local superlinear convergence becomes vanishingly smaller as the mesh size goes to zero.

The analysis considered here opens the possibility for developing solvers that combine the favourable properties of a PDAS with mesh-independent iteration counts. For instance, understanding the peeling effect may help guide the precise refinement of grids during the application of a full approximation scheme \cite{bueler2024}. Alternatively, the convergence rate results in \cref{th:convergence-rate,th:convergence-rate-inf} may help design an adaptive regularization that maximizes the size of a regularized Lagrange multiplier on the inactive set to accelerate convergence. The broader lesson is that algorithms should be analysed at the level of the original problem, not only after discretization, since otherwise unexpected degradation in convergence may occur.

\section*{Acknowledgements}
IP and MH acknowledge the funding support by the Deut\-sche Forschungsgemeinschaft (DFG, German Research Foundation) under Germany's Excellence Strategy -- The Berlin Mathematics Research Center MATH+ (EXC-2046/1, project ID: 390685689), and MH acknowledges in addition subproject AA5-2 “Robust Multilevel Training of Artificial Neural Networks.”).  IP would like to thank Timon Gutleb for helpful comments on the initial draft. %

\appendix
\section{Further numerical experiments}
\label{sec:further-experiments}

We supplement our investigations by a numerical study of different scenarios: (i) Lifting of the multiplier into the discrete primal space; (ii) applying a grid sequencing scheme by starting on a coarse mesh and interpolating the solution onto the next finer mesh as a starting point (i.e., we implement a {\it full approximation scheme} following terminology from multigrid methods); (iii) solving problems with biactivities, i.e., problems where on a relevant subset of $\{u^*=\varphi\}$ it holds $\lambda^*=0$; (iv) optimal control problems with pointwise constraints on the control (known for applicability of SSN in function space and mesh independence) or pointwise state constraints (SSN in function space not applicable).
We record the respective iteration counts in \cref{tab:further}. 

\subsection{Primal-dual correction}
\label{sec:further-experiments:primal-dual}
There appears to be a primal-dual mismatch in \cref{alg:hik} where the discretized Lagrange multiplier is an element of the dual space in \cref{pdas:1a}, but seemingly in the primal space in \cref{pdas:1b}. One may correct this by considering a P1-FEM discretization for the Lagrange multiplier. After following a Newton linearization (and not performing a mass lumping), one finds they must solve
\begin{align}
\begin{pmatrix}
A & M \\
-E^{\mathcal{A}^k_h} & E^{\mathcal{I}^k_h}
\end{pmatrix}
\begin{pmatrix}
\vectt{\delta}^k\\
\vectt{\varepsilon}^k
\end{pmatrix}
=
-
\begin{pmatrix}
A \vectt{u}^k + M\vectt{\lambda}^k - \vectt{b}\\
\vectt{g}^k
\end{pmatrix}
\label{eq:primal-dual-step}
\end{align}
at each iteration. The active $\mathcal{A}^k_h$ and inactive $\mathcal{I}^k_h$ retain the same definitions. As the P1-FEM mass matrix $M$ is not diagonal, we cannot as easily decompose the solve and, at each iteration, we invert the full matrix in \cref{eq:primal-dual-step} via an LU factorization. Hence, we do not refer to this solver as a PDAS, but nevertheless it is an SSN for the discretized problem. We apply this solver to \cref{prob:obstacle}. Unfortunately, as reported in \cref{tab:further}, the iteration count still roughly doubles with each uniform refinement. 

\subsection{Grid-sequencing}
\label{sec:further-experiments:gridsequencing}
By choosing an initial guess close to the solution, one can mitigate the effects of mesh dependence as fewer dofs must peel away from the obstacle during the deactivation phase. Here we investigate the effectiveness of grid-sequencing when applied to \cref{prob:obstacle} with $d=2$. The initial guess on each mesh is constructed by interpolating the solution on the previous mesh. On the coarsest mesh, we pick a zero initial guess. The HIK linear systems are solved via a Cholesky factorization. \cref{tab:further} reveals that we observe no growth in the iteration count with each uniform refinement. However, note that access to the grid-sequenced initial guess relies on solving the obstacle problem on all the parent meshes. Hence, comparing iteration counts on a mesh-by-mesh basis is problematic.

\subsection{Obstacle problem with biactivity}
\label{sec:further-experiments:biactive}

The solutions of some obstacle problems feature regions $ B \subseteq \Omega$ of \emph{biactivity} where, simultaneously, $\lambda(x) = 0$ and $u(x)=\varphi(x)$ for a.e.~$x \in B$. Physically, this is interpreted as the membrane $u$ exerting zero force on the obstacle $\varphi$ at that point. Solvers sometimes struggle to converge to biactive solutions. We borrow, and slightly modify, two biactive obstacle problems from \cite[Sec.~4.8]{keith2023}. 

We fix the square domain $\Omega = (-1,1)^2$ and have a zero obstacle $\varphi \equiv 0$. Let $\mathbb{I}_B$ denote the indicator function for some $B \subseteq \Omega$. In our first problem, we pick the Dirichlet boundary condition $u(x_1,x_2)|_{\partial \Omega} = g(x_1,x_2)$ and forcing term $f$ given by:
\begin{align}
g(x_1,x_2) = - \mathbb{I}_{\{x_1 \geq 0\}} \cdot x_1^4,
\quad
f(x_1,x_2)= \mathbb{I}_{\{x_1 \geq 0\}} \cdot (12x_1^2).
\label{eq:biactive:1}
\end{align}
This results in the solution $u(x_1,x_2) = g(x_1,x_2)$ and $\lambda \equiv 0$. 

The second obstacle problem we consider has the solution 
\begin{align}
u(x_1,x_2) = -\mathbb{I}_{\{x_1^2+x_2^2<1/4\}} \cdot (1-4x_1^2 - 4x_2^2)^4, \quad
\lambda(x_1,x_2) = \mathbb{I}_{\{x_1^2+x_2^2>3/4\}}.
\end{align}
This solution is induced by a zero Dirichlet boundary condition and the forcing term:
\begin{align}
\label{eq:biactive:2}
f(x_1,x_2) = -\Delta u(x_1,x_2) + \mathbb{I}_{\{x_1^2+x_2^2 >3/4\}}.
\end{align}
We record the HIK iteration growth in \cref{tab:further}. We observe similar growth as for the other obstacle problem examples.

\subsection{Optimal control}
\label{sec:further-experiments:optimal-control}

Let $\Omega = (0,1)^2$, $\beta = 10^{-2}$, and $u_d(x_1,x_2) \equiv 5$. Our first example, in optimal control, is to compute $(u,z) \in H^1_0(\Omega) \times L^2(\Omega)$ that, for all $q \in H^1_0(\Omega)$, minimizes
\begin{align}
\min_{u,z} \frac{1}{2} \| u - u_d \|^2_{L^2(\Omega)} + \frac{\beta}{2} \|z \|^2_{L^2(\Omega)} \;\; \text{subject to} \; \; (\nabla u, \nabla q) = (z,q),
\label{eq:oc1}
\end{align}
together with either the state constraint $u \leq 1$ or the control constraint $z \leq 1$ a.e.~in $\Omega$. The second example is to compute $(u,z) \in H^1_0(\Omega) \times H^1_0(\Omega)$ that, for all $q \in H^1_0(\Omega)$, minimizes
\begin{align}
\min_{u,z} \frac{1}{2} \| u - u_d \|^2_{L^2(\Omega)} + \frac{\beta}{2} \| \nabla z \|^2_{L^2(\Omega)} \;\; \text{subject to} \; \; (\nabla u, \nabla q) = (z,q),
\label{eq:oc2}
\end{align}
and the ``control" constraint $z \leq 1$ a.e.~in $\Omega$. For our last optimal control example, we consider, for all $q \in H^1_0(\Omega)$, 
\begin{align}
\min_{u,z} \frac{1}{2} \| u - u_d \|^2_{H^1(\Omega)} + \frac{\beta}{2} \|z \|^2_{L^2(\Omega)} \;\; \text{subject to} \; \; (\nabla u, \nabla q) = (z,q),
\label{eq:oc3}
\end{align}
with the state constraint $u \leq  1/5$ a.e.~in $\Omega$. We discretize $u$, $z$, and the equality constraint Lagrange multiplier $p$ with P1-FEM. Note that the stiffness matrix $A$ in \cref{alg:hik} is replaced by the iteration-independent Hessian associated with the Lagrangian of the optimal control problems. The HIK linear systems are solved via an LU factorization. We provide the HIK iteration counts in \cref{tab:further}.

The control constrained optimal control problem \cref{eq:oc1} leads to mesh-independent iteration counts. This is unsurprising as HIK applied to such problems is an SSN even on the infinite-dimensional level \cite[Sec.~4]{hintermuller2002}. All the other formulations feature exponentially growing iteration counts.

\begin{table}[t!]
\renewcommand{\arraystretch}{1.1}
\centering
\small
\begin{tabular}{|l|c|c|c|c|c|c|c|}
\hhline{-|-|-|-|-|-|-|-|}
\rowcolor{lightgray!10} \multicolumn{1}{|l|}{Obstacle problem ($n \times n$)} & $2^4$ & $2^5$ & $2^6$  &$2^7$ &$2^8$ &$2^9$ & $2^{10}$\\ 
 \hline
Mass matrix (\Cref{sec:further-experiments:primal-dual})& 8  & 10 &  15& 26& 45 & 85 & 166  \\
Grid-sequencing (\Cref{sec:further-experiments:gridsequencing})&   5 &  2 &  2 & 3& 3 & 3 & 4  \\
Biactive \cref{eq:biactive:1} (\Cref{sec:further-experiments:biactive}) & 7 & 15& 31& 63& 127& 255 & 510\\
Biactive \cref{eq:biactive:2} (\Cref{sec:further-experiments:biactive}) &  7& 10& 15& 26& 48& 93&181\\
\hline
\rowcolor{lightgray!10} \multicolumn{1}{|l|}{Optimal control ($n \times n$)} & $5$ & $10$ & $20$  &$40$ &$80$ &$160$ & $320$\\ 
 \hline
\cref{eq:oc1} \& $z \leq 1$ & 3  &4  &  4&3 &4 & 4&4  \\
\cref{eq:oc1} \& $u \leq 1$ & 2 &  7& 10 &25  &45 & 90 &  186\\
\cref{eq:oc2} \& $z \leq 1$ & 3  &3  &  6&10 &16 & 31& 59  \\
\cref{eq:oc3} \& $u \leq 1/5$ & 2  &  6& 10 & 20 & 37& 73& 142 \\
\hline
\end{tabular}
\caption{HIK iteration counts for the examples introduced in \Cref{sec:further-experiments}. The numbers in the greyed regions are the mesh resolution $n$.} 
\label{tab:further}
\end{table}

\bibliographystyle{siamplain}
\bibliography{references}

\begin{thebibliography}{10}

\bibitem{Adams2003}
{\sc R.~A. Adams and J.~J. Fournier}, {\em Sobolev spaces}, Elsevier,
  second~ed., 2003.

\bibitem{alphonse2025}
{\sc A.~Alphonse, P.~Dvurechensky, I.~P.~A. Papadopoulos, and C.~Sirotenko},
  {\em {LeAP-SSN: A semismooth Newton method with global convergence rates}},
  2025, \url{https://arxiv.org/abs/2508.16468}.

\bibitem{paraview}
{\sc U.~Ayachit}, {\em The {P}ara{V}iew guide: A Parallel Visualization
  Application}, Kitware, Inc., 2015.

\bibitem{petsc}
{\sc S.~Balay, S.~Abhyankar, M.~F. Adams, S.~Benson, J.~Brown, P.~Brune,
  et~al.}, {\em {PETSc/TAO} users manual}, Tech. Report ANL-21/39 - Revision
  3.21, Argonne National Laboratory, 2024,
  \url{https://doi.org/10.2172/2205494}.

\bibitem{banz2015}
{\sc L.~Banz and A.~Schr{\"o}der}, {\em Biorthogonal basis functions in
  $hp$-adaptive {FEM} for elliptic obstacle problems}, Computers \& Mathematics
  with Applications, 70 (2015), pp.~1721--1742,
  \url{https://doi.org/10.1016/j.camwa.2015.07.010}.

\bibitem{benson2006}
{\sc S.~J. Benson and T.~S. Munson}, {\em Flexible complementarity solvers for
  large-scale applications}, Optimization Methods and Software, 21 (2006),
  pp.~155--168, \url{https://doi.org/10.1080/10556780500065382}.

\bibitem{bergounioux1999}
{\sc M.~Bergounioux, K.~Ito, and K.~Kunisch}, {\em Primal-dual strategy for
  constrained optimal control problems}, SIAM Journal on Control and
  Optimization, 37 (1999), pp.~1176--1194,
  \url{https://doi.org/10.1137/S0363012997328609}.

\bibitem{bergounioux2002}
{\sc M.~Bergounioux and K.~Kunisch}, {\em Primal-dual strategy for
  state-constrained optimal control problems}, Computational Optimization and
  Applications, 22 (2002), pp.~193--224,
  \url{https://doi.org/10.1023/A:1015489608037}.

\bibitem{Bezanson2017}
{\sc J.~Bezanson, A.~Edelman, S.~Karpinski, and V.~B. Shah}, {\em Julia: {A}
  fresh approach to numerical computing}, SIAM Review, 59 (2017), pp.~65--98,
  \url{https://doi.org/10.1137/141000671}.

\bibitem{brandt1983}
{\sc A.~Brandt and C.~W. Cryer}, {\em Multigrid algorithms for the solution of
  linear complementarity problems arising from free boundary problems}, SIAM
  Journal on Scientific and Statistical Computing, 4 (1983), pp.~655--684,
  \url{https://doi.org/10.1137/0904046}.

\bibitem{Brenner2008}
{\sc S.~C. Brenner and L.~R. Scott}, {\em {The Mathematical Theory of Finite
  Element Methods}}, vol.~15 of Texts in Applied Mathematics, Springer New
  York, New York, NY, 3~ed., 2008,
  \url{https://doi.org/10.1007/978-0-387-75934-0}.

\bibitem{brezis1971}
{\sc H.~Br{\'e}zis}, {\em Nouveaux th{\'e}or{\`e}mes de r{\'e}gularit{\'e} pour
  les probl{\`e}mes unilat{\'e}raux}, Les rencontres
  physiciens-math{\'e}maticiens de Strasbourg-RCP25, 12 (1971), pp.~1--14.

\bibitem{brokate2022}
{\sc M.~Brokate and M.~Ulbrich}, {\em Newton differentiability of convex
  functions in normed spaces and of a class of operators}, SIAM Journal on
  Optimization, 32 (2022), pp.~1265--1287,
  \url{https://doi.org/10.1137/21M1449531}.

\bibitem{bueler2024}
{\sc E.~Bueler and P.~E. Farrell}, {\em A full approximation scheme multilevel
  method for nonlinear variational inequalities}, SIAM Journal on Scientific
  Computing, 46 (2024), pp.~A2421--A2444,
  \url{https://doi.org/10.1137/23M1594200}.

\bibitem{chen1996}
{\sc C.~Chen and O.~L. Mangasarian}, {\em A class of smoothing functions for
  nonlinear and mixed complementarity problems}, Computational Optimization and
  Applications, 5 (1996), pp.~97--138,
  \url{https://doi.org/10.1007/BF00249052}.

\bibitem{de1996}
{\sc T.~De~Luca, F.~Facchinei, and C.~Kanzow}, {\em A semismooth equation
  approach to the solution of nonlinear complementarity problems}, Mathematical
  programming, 75 (1996), pp.~407--439,
  \url{https://doi.org/10.1007/BF02592192}.

\bibitem{dirkse1995}
{\sc S.~P. Dirkse and M.~C. Ferris}, {\em The {PATH} solver: a non-monotone
  stabilization scheme for mixed complementarity problems}, Optimization
  Methods and Software, 5 (1995), pp.~123--156,
  \url{https://doi.org/10.1080/10556789508805606}.

\bibitem{dirkse1997}
{\sc S.~P. Dirkse and M.~C. Ferris}, {\em Crash techniques for large-scale
  complementarity problems}, Complementarity and Variational Problems: State of
  the Art, 92 (1997), p.~40.

\bibitem{dokken2025}
{\sc J.~S. Dokken, P.~E. Farrell, B.~Keith, I.~P. Papadopoulos, and T.~M.
  Surowiec}, {\em The latent variable proximal point algorithm for variational
  problems with inequality constraints}, Computer Methods in Applied Mechanics
  and Engineering, 445 (2025), p.~118181,
  \url{https://doi.org/10.1016/j.cma.2025.118181}.

\bibitem{ern2021:i}
{\sc A.~Ern and J.-L. Guermond}, {\em Finite Elements {I}: Approximation and
  Interpolation}, Springer, 2021,
  \url{https://doi.org/10.1007/978-3-030-56341-7}.

\bibitem{Evans2010}
{\sc L.~C. Evans}, {\em {Partial Differential Equations}}, American
  Mathematical Society, 2~ed., 2010.

\bibitem{falk1974}
{\sc R.~S. Falk}, {\em Error estimates for the approximation of a class of
  variational inequalities}, Mathematics of Computation, 28 (1974),
  pp.~963--971, \url{https://doi.org/10.1090/S0025-5718-1974-0391502-8}.

\bibitem{farrell2020}
{\sc P.~E. Farrell, M.~Croci, and T.~M. Surowiec}, {\em Deflation for
  semismooth equations}, Optimization Methods and Software, 35 (2020),
  pp.~1248--1271, \url{https://doi.org/10.1080/10556788.2019.1613655}.

\bibitem{fichera1964}
{\sc G.~Fichera}, {\em Problemi elastostatici con vincoli unilaterali: il
  problema di {S}ignorini con ambigue condizioni al contorno}, Accademia
  nazionale dei Lincei, 1964.

\bibitem{graser2009}
{\sc C.~Gr{\"a}ser and R.~Kornhuber}, {\em Multigrid methods for obstacle
  problems}, Journal of Computational Mathematics,  (2009), pp.~1--44.

\bibitem{grisvard2011}
{\sc P.~Grisvard}, {\em Elliptic problems in nonsmooth domains}, SIAM, 2011,
  \url{https://doi.org/10.1137/1.9781611972030}.

\bibitem{hackbusch1983}
{\sc W.~Hackbusch and H.~D. Mittelmann}, {\em On multi-grid methods for
  variational inequalities}, Numerische Mathematik, 42 (1983), pp.~65--76,
  \url{https://doi.org/10.1007/BF01400918}.

\bibitem{harker1990}
{\sc P.~T. Harker and B.~Xiao}, {\em {Newton's method for the nonlinear
  complementarity problem: A B-differentiable equation approach}}, Mathematical
  Programming, 48 (1990), pp.~339--357,
  \url{https://doi.org/10.1007/BF01582262}.

\bibitem{hintermuller2002}
{\sc M.~Hinterm{\"u}ller, K.~Ito, and K.~Kunisch}, {\em The primal-dual active
  set strategy as a semismooth {N}ewton method}, SIAM Journal on Optimization,
  13 (2002), pp.~865--888, \url{https://doi.org/10.1137/S1052623401383558}.

\bibitem{hintermuller2003}
{\sc M.~Hinterm{\"u}ller, V.~A. Kovtunenko, and K.~Kunisch}, {\em Semismooth
  {N}ewton methods for a class of unilaterally constrained variational
  problems}, Universit{\"a}t Graz/Technische Universit{\"a}t Graz. SFB
  F003-Optimierung und Kontrolle,  (2003).

\bibitem{hintermuller2006feasible}
{\sc M.~Hinterm{\"u}ller and K.~Kunisch}, {\em Feasible and noninterior
  path-following in constrained minimization with low multiplier regularity},
  SIAM Journal on Control and Optimization, 45 (2006), pp.~1198--1221,
  \url{https://doi.org/10.1137/050637480}.

\bibitem{hintermuller2004mesh}
{\sc M.~Hinterm{\"u}ller and M.~Ulbrich}, {\em A mesh-independence result for
  semismooth {N}ewton methods}, Mathematical Programming, 101 (2004),
  pp.~151--184, \url{https://doi.org/10.1007/s10107-004-0540-9}.

\bibitem{hoppe1987}
{\sc R.~H. Hoppe}, {\em Multigrid algorithms for variational inequalities},
  SIAM Journal on Numerical Analysis, 24 (1987), pp.~1046--1065,
  \url{https://doi.org/10.1137/0724069}.

\bibitem{hoppe1994}
{\sc R.~H. Hoppe and R.~Kornhuber}, {\em Adaptive multilevel methods for
  obstacle problems}, SIAM Journal on Numerical Analysis, 31 (1994),
  pp.~301--323, \url{https://doi.org/10.1137/0731016}.

\bibitem{hueber2005}
{\sc S.~H{\"u}eber and B.~I. Wohlmuth}, {\em A primal--dual active set strategy
  for non-linear multibody contact problems}, Computer Methods in Applied
  Mechanics and Engineering, 194 (2005), pp.~3147--3166,
  \url{https://doi.org/10.1016/j.cma.2004.08.006}.

\bibitem{ito2003}
{\sc K.~Ito and K.~Kunisch}, {\em Semi--smooth {N}ewton methods for variational
  inequalities of the first kind}, ESAIM: Mathematical Modelling and Numerical
  Analysis, 37 (2003), pp.~41--62, \url{https://doi.org/10.1051/m2an:2003021}.

\bibitem{ito2008}
{\sc K.~Ito and K.~Kunisch}, {\em Lagrange multiplier approach to variational
  problems and applications}, SIAM, 2008,
  \url{https://doi.org/10.1137/1.9780898718614}.

\bibitem{keith2023}
{\sc B.~Keith and T.~M. Surowiec}, {\em Proximal {G}alerkin: {A}
  structure-preserving finite element method for pointwise bound constraints},
  Foundations of Computational Mathematics,  (2024), pp.~1--97,
  \url{https://doi.org/10.1007/s10208-024-09681-8}.

\bibitem{kinderlehrer2000}
{\sc D.~Kinderlehrer and G.~Stampacchia}, {\em An introduction to variational
  inequalities and their applications}, SIAM, 2000,
  \url{https://doi.org/10.1137/1.9780898719451}.

\bibitem{kirby2024}
{\sc R.~C. Kirby and D.~Shapero}, {\em High-order bounds-satisfying
  approximation of partial differential equations via finite element
  variational inequalities}, Numerische Mathematik,  (2024), pp.~1--21,
  \url{https://doi.org/10.1007/s00211-024-01405-y}.

\bibitem{kornhuber1994}
{\sc R.~Kornhuber}, {\em Monotone multigrid methods for elliptic variational
  inequalities {I}}, Numerische Mathematik, 69 (1994), pp.~167--184,
  \url{https://doi.org/10.1007/BF03325426}.

\bibitem{kornhuber1996}
{\sc R.~Kornhuber}, {\em Monotone multigrid methods for elliptic variational
  inequalities {II}}, Numerische Mathematik, 72 (1996), pp.~481--499,
  \url{https://doi.org/10.1007/s002110050178}.

\bibitem{munson2001}
{\sc T.~S. Munson, F.~Facchinei, M.~C. Ferris, A.~Fischer, and C.~Kanzow}, {\em
  The semismooth algorithm for large scale complementarity problems}, INFORMS
  Journal on Computing, 13 (2001), pp.~294--311,
  \url{https://doi.org/10.1287/ijoc.13.4.294.9734}.

\bibitem{papadopoulos2021}
{\sc I.~P. Papadopoulos, P.~E. Farrell, and T.~M. Surowiec}, {\em Computing
  multiple solutions of topology optimization problems}, SIAM Journal on
  Scientific Computing, 43 (2021), pp.~A1555--A1582,
  \url{https://doi.org/10.1137/20M1326209}.

\bibitem{obstacle.jl}
{\sc I.~P.~A. Papadopoulos}, {\em {PrimalDualActiveSet.jl}}, 2025,
  \url{https://github.com/ioannisPApapadopoulos/PrimalDualActiveSet.jl}.

\bibitem{zenodo}
{\sc I.~P.~A. Papadopoulos}, {\em {PrimalDualActiveSet.jl, v0.1.0 (Zenodo)}},
  2026, \url{https://doi.org/10.5281/zenodo.21382741}.

\bibitem{schoberl1998}
{\sc J.~Sch{\"o}berl}, {\em Solving the {S}ignorini problem on the basis of
  domain decomposition techniques}, Computing, 60 (1998), pp.~323--344,
  \url{https://doi.org/10.1007/BF02684379}.

\bibitem{schwedes2017mesh}
{\sc T.~Schwedes, D.~A. Ham, S.~W. Funke, and M.~D. Piggott}, {\em Mesh
  dependence in {PDE}-constrained optimisation}, Springer, 2017,
  \url{https://doi.org/10.1007/978-3-319-59483-5}.

\bibitem{stampacchia1964}
{\sc G.~Stampacchia}, {\em Formes bilineaires coercitives sur les ensembles
  convexes}, Comptes Rendus Hebdomadaires Des Seances De L Academie Des
  Sciences, 258 (1964), p.~4413.

\bibitem{ulbrich2002semismooth}
{\sc M.~Ulbrich}, {\em Semismooth {N}ewton methods for operator equations in
  function spaces}, SIAM Journal on Optimization, 13 (2002), pp.~805--841,
  \url{https://doi.org/10.1137/S1052623400371569}.

\end{thebibliography}

\end{document}